\numberwithin{equation}{section}
\newtheorem{theorem}{Theorem}
\newtheorem{meta-thm}[theorem]{Meta-Theorem}
\newtheorem{lemma}[theorem]{Lemma}
\newtheorem{cor}[theorem]{Corollary}
\newtheorem{proposition}[theorem]{Proposition}
\newtheorem{remark}[theorem]{Remark}
\newtheorem{definition}[theorem]{Definition}
\newtheorem{conjecture}[theorem]{Conjecture}
\newcommand\beq[1]{ \begin{equation}\label{#1} }
\newcommand{\eeq}{ \end{equation} }
\newcommand\beqa[1]{ \begin{eqnarray} \label{#1}}
\newcommand{\eeqa}{ \end{eqnarray} }
\newcommand{\beqano}{ \begin{eqnarray*} }
\newcommand{\eeqano}{ \end{eqnarray*} }
\newcommand\equ[1]{{\rm (\ref{#1})}}
\newcommand{\av}[1]{\langle #1 \rangle}
\def\ep{\varepsilon}
\def\dist{\operatorname{dist}}
\def\Im{\operatorname{Im}}
\def\A{{\mathcal A}}
\def\C{{\mathcal C}}
\def\D{{\mathcal D}}
\def\O{{\mathcal O}}
\def\P{{\mathcal P}}
\def\Lop{{\mathcal L}}
\def\M{{\mathcal M}}
\def\complex{{\mathbb C}}
\def\integer{{\mathbb Z}}
\def\nat{{\mathbb N}}
\def\real{{\mathbb R}}
\def\torus{{\mathbb T}}
\def\Tau{ \mathcal{T}}
\def\B{{\mathcal B}}
\def\F{{\mathcal F}}
\def\G{{\mathcal G}}
\def\T{{\mathcal T}}
\begin{document}

\title[Response solutions for dissipative wave equations]
{Response solutions for quasi-periodically forced, dissipative wave equations}

\author[R. Calleja]{Renato C.  Calleja}
\address{Department of Mathematics and Mechanics, IIMAS, National
  Autonomous University of Mexico (UNAM), Apdo. Postal 20-126,
  C.P. 01000, Mexico D.F., Mexico}
\email{calleja@mym.iimas.unam.mx}

\author[A. Celletti]{Alessandra Celletti}
\address{
Department of Mathematics, University of Roma Tor Vergata, Via della Ricerca Scientifica 1,
00133 Rome, Italy}
\email{celletti@mat.uniroma2.it}

\author[L. Corsi]{Livia Corsi}
\address{
Department of Mathematics and Statistics, McMaster University, Hamilton ON, L8S 4K1, Canada}
\email{lcorsi@math.mcmaster.ca}

\author[R. de la Llave]{Rafael de la Llave}
\address{
School of Mathematics,
Georgia Institute of Technology,
686 Cherry St., Atlanta GA. 30332-0160, USA}
\email{rafael.delallave@math.gatech.edu}

\thanks{R.C. was partially supported by NSF grant DMS-1162544 and CONACYT grant 133036.
A.C. was partially supported by PRIN-MIUR 2010JJ4KPA$\_$009 and GNFM/INdAM.
L.C. was partially supported by the ERC project
``Hamiltonian PDEs and small divisor problems: a dynamical systems approach''.
R.L. was partially supported by NSF grant DMS-1162544. }

\baselineskip=18pt              


\begin{abstract}
We consider several models of nonlinear wave equations subject
to very strong damping
and quasi-periodic external forcing. This is a singular perturbation, since the damping is not the highest
order term. We study the existence of response solutions
(i.e., quasi-periodic solutions with the same frequency as the forcing).

Under very general non-resonance conditions on the frequency, we show
the existence of asymptotic expansions of the response solution;
moreover, we prove that the response solution indeed exists and depends
analytically on $\varepsilon$ (where $\varepsilon$ is the inverse of the
coefficient multiplying the damping) for $\varepsilon$ in a complex domain, which in some cases
includes disks tangent to the imaginary axis at the origin.
In other models, we prove analyticity in cones of aperture $\pi/2$ and we conjecture it is optimal.
These results
have consequences for the asymptotic expansions of the response
solutions considered in the literature.
The proof of our results relies on reformulating the
problem as a fixed point problem,
constructing an approximate solution and studying the properties of
iterations that converge to the solutions of the
fixed point problem.
\end{abstract}

\subjclass[2000]{70K43, 70K20, 34D35}
\keywords{Dissipative wave equation, quasi--periodic solution, response solution}

\maketitle

\tableofcontents

\section{Introduction}
In recent times there has been extensive interest in strongly damped
systems, namely systems in which the  term describing the damping
contains a factor $\varepsilon^{-1}$
(where $\varepsilon$ is a small parameter), and subject to external forcing.
Since the damping is not the term which corresponds to the time-derivative
of highest order, this is a  singular perturbation in $\varepsilon$.
We are interested in finding \emph{response solutions}, i.e. solutions
which have the same frequency as the forcing term.

A first try to understand these problems is to use perturbation
theory in $\varepsilon$ and obtain formal
series in powers of $\varepsilon$. Nevertheless, since the perturbation is
singular, one does not expect that the resulting formal series
is convergent
and one needs to use re-summation techniques to obtain
that there is an analytic solution defined in an open complex domain
which does not include $\varepsilon = 0$, but has it on the boundary.
This approach has been used for ODE's in
\cite{GentileBD05, GentileBD06, Gentile10a, Gentile10b}. Different arguments
for other singular perturbation problems can be found in
\cite{Balser}.

In \cite{CallejaCL13} one can find an alternative approach for singular
problems in ODE's, which inspired our treatment for PDE's. One considers the perturbative expansion to low orders
and obtains a reasonably good approximate solution  in a neighborhood of $\varepsilon=0$
(i.e., an expression that solves the equation up to a small error).
Then, starting from the approximate solution, one switches to another perturbative method
(a contraction mapping argument) to prove the existence of a true solution.
Since the problem is analytic in $\varepsilon$ for $\varepsilon$
ranging in a complex domain, one obtains analytic dependence in $\varepsilon$
of the solution
for $\varepsilon$ in a certain domain which does not include
any  ball
centered at zero. Indeed, we find that there are arbitrarily small
values of $\varepsilon$ for which the map is not a contraction and
the method of proof breaks down. We conjecture that this is a real
effect and not just a shortcoming of the method.

To motivate the procedure adopted in \cite{CallejaCL13}, we argue heuristically
that  since $\varepsilon = 0$  is the most singular value of $\varepsilon$, one
attempts to do as little work as possible based on it.
One tries to implement a perturbation theory on
small but non-zero values of $\varepsilon$; as soon as one gets even a flimsy foothold on non-zero values of $\varepsilon$ one switches to
another perturbation method that is not affected by singularities
(even if it contains some large terms, they can be beaten by
pairing them with small ones).
This procedure is somewhat reminiscent of some
works in  celestial mechanics, notably Hill's
theory of the Moon (\cite{Hill}, \cite[Vol 2]{Poincarefrench}), in which one uses a perturbation theory
from an intermediate model which is controlled in turn by another
perturbative argument.

As it  happens often in perturbative expansions,
the way one deals with the first order term is different from the subsequent ones:
this is even more evident in situations like the present one, since we are dealing with
singular perturbations.
In \cite{CallejaCL13} the first term of the expansion,
corresponding to $\varepsilon = 0$, was obtained by means of an implicit function theorem,
but the subsequent steps were all similar and they involved
the same hypotheses. In this paper, the difference between
the zeroth order term and the higher order ones  is even more dramatic.
The term in the expansion corresponding to
$\varepsilon = 0 $ is very different from the others and in principle can be dealt
with a variety of methods, including implicit function theorems
(at least for certain cases, as we do for the model described by \equ{largestiff} below -- see Section~\ref{sec:othersB}) or using variational
methods (as we can do for the model described by \equ{dissipativewave} below), depending on the model we are studying.
As we will see, when we apply variational
methods, we may get even infinitely many solutions of the order $0$
equation. Each of them will lead to a family of solutions, which is analytic in $\varepsilon$.

Hence, the procedure adopted in the present work has two steps, with the first step having two
substeps.
\begin{enumerate}
\item[a)]
Obtaining an approximate solution to high order, and precisely:
\begin{enumerate}
\item[a1)]
obtaining the order zero solution;
\item[a2)]
obtaining high order approximations.
\end{enumerate}

\item[b)]
Polishing off the approximate solutions to obtain true solutions.
\end{enumerate}

Each  of these steps
has its own methodology (indeed, step a1) will
be accomplished by means of several  different methodologies depending on the model) and requires
different conditions on the frequency as well as different
non-degeneracy assumptions. Hence, the conditions required in
the main theorem are obtained by joining together
the conditions of all the steps.

Nevertheless, the final assumptions are very weak.
For example, the non-resonance conditions needed to carry out the whole problem
are weaker than the Brjuno condition and they allow exponentially growing small divisors.

The strategy above is widely applicable. In this paper
we  decided to document its breadth by applying it to
4 different models in the literature with several variations, such as
different boundary conditions. We call these models A, A', B, B' (more details will be
given in Section~\ref{sec:models}).
On the other hand, we
have not optimized the hypotheses: It seems clear that one could
obtain slightly sharper domains of analyticity, better regularity
conditions, less assumptions on the domain, etc.
We conjecture (and present arguments in favor) that the domains obtained are
essentially optimal (see Section~\ref{sec:optimal}).

The main result for step a) is Theorem~\ref{thm:allorders};
the main result for step b) is Theorem~\ref{convergence}
and the final result is Theorem~\ref{thm:main}.

The step a1) is the solution of a functional equation.
The step a2) is a Lindstedt procedure, which entails very mild conditions on the small divisors
and requires very weak non-resonance conditions on the frequency.
In this way one produces polynomials in $\varepsilon$ which solve
the equation up to some (high) power of $\varepsilon$.
Under a bit stronger conditions on the small divisors, the Lindstedt procedure provides the existence of a
formal solution up to all orders in $\varepsilon$ (see Theorem~\ref{thm:allorders}).
As it turns out, the solutions will be unique once we fix the solution of order $0$; however, as already pointed out,
this solution to order $0$ can be very non-unique.

The step b)
is based on a contraction
mapping principle. Hence no small divisors are involved but, on the
other hand, we need
to consider $\varepsilon$ in an appropriate complex domain to carry out the argument.  We also note that step b) also works in cases where the spectrum of
the operators driving the evolution is not discrete. Unfortunately, we
do not know good conditions that ensure that one can perform step a)
when the spectrum is not discrete. If, by any chance,
one is dealing with a particular problem having a continuous spectrum and step a) can be performed,
then step b) can be performed too and one can obtain the result.

The final result is that the response solution is an analytic function of $\varepsilon$ defined in a domain
(selected in step b) ) which does not include
zero, even if it might include circles with real centers and tangent
to the imaginary axis. Hence, the method does not guarantee that the
Lindstedt series (the formal power expansion) converges, because
the analyticity domain established does not contain any circle centered at the origin.
Indeed, in \cite{CallejaCL13} one can
find arguments that suggest that the Lindstedt series does not converge in
general, even in the case of ODE's. Here, we also present similar arguments
in Section~\ref{sec:optimal}.

Nevertheless, the domain of analyticity established here for models A, A' (describing dissipative wave equations)
is large  enough, so that the application of the
 Nevanlinna-Sokal theory (\cite{Nevanlinna,Sokal,Hardy}) on
asymptotic expansions applies. As a consequence, the response solutions
constructed here
have an asymptotic expansion and these functions can be reconstructed from
their asymptotic expansions
by re-summation. Notice that this procedure is very different from
establishing the existence of the solution by re-summing the series. Of course,
since the problem is nonlinear, re-summing the series is not enough and one
needs other arguments to show that the re-summation
solves the equation (\cite{Hardy}, see also \cite{Balser,BalserLS,GentileBD05, GentileBD06, Gentile10a, Gentile10b}).

In some models such as models B, B' (describing large stiffness equations),
we obtain  domains of analyticity which are cones containing the imaginary axis and have an aperture of $\pi/2$.
We conjecture that these domains are essentially optimal (see Section~\ref{sec:optimal}).
We will show that the functions we construct have the same asymptotic
expansions as the formal power series. On
the other hand we note that in domains of this kind
it is not clear that the response solution can be obtained
by re-summing the asymptotic expansions: indeed in these domains
there are non-trivial functions whose asymptotic expansion vanishes,
so that the expansion is not unique
(e.g., the Cauchy
example $\exp( - \varepsilon^{-2})$ which has an asymptotic expansion vanishing in domains of
aperture $\pi/2$). As a consequence,
it could well happen that for these models the response solutions
lead to exponentially small phenomena. Notice that model B
is an infinite dimensional analogue of fast oscillators for
which exponentially small phenomena have been established
(see \cite{Seara}).

In \cite{CallejaCL13} the problem considered is the
\emph{varactor} equation, which is a single ODE. Even if the
step b) in \cite{CallejaCL13} was just an
elementary one (based on contraction arguments),
the results obtained in \cite{CallejaCL13}
improved the existence domains and weakened
the  non-resonance conditions that have been imposed in the
previous literature.  It seems plausible that using a more efficient
fixed point argument
(e.g., a KAM theory) or higher order
perturbations in step a) would improve the results.
The analyticity domain has later been extended for ODE's in \cite{CFG1}
(where a  domain of analyticity  tangent more than quadratically to the origin was established)
and the non-degeneracy assumption on the non-linearity has been
relaxed (for real $\varepsilon$) in \cite{CFG2}.
As further references, we mention also \cite{rab1,rab2,cr}, where
the periodic case with real small damping has been considered.

\subsection{Description of the main results}
The goal of this paper is to extend the method of \cite{CallejaCL13}
to some PDE's. The method is very flexible and we will present results for
four different models considered in
the literature, each with three different types of boundary conditions
(see Section~\ref{sec:models}). It is clear that there are many more
models that could have been considered by the method. Of course
the main difficulty of the extension to PDE's is
that the operators are unbounded. Hence the reformulation of the problem as
a fixed point problem requires some more thought, even to get  a
viable formulation. For example, we need to ensure that the operator
maps some space into itself and that the space satisfies suitable properties
(such as Banach algebra properties).
So, considerable effort goes into the choice of spaces
as it happened in the classical study of elliptic problems
(see Section~\ref{sec:spaces}).

The models we consider in this paper have the form
$$
\partial_{tt}u(t,x) + \frac{1}{\varepsilon}{\rm Friction}- \Delta_x u(t,x)
+ h(u(t,x) , x) = f(\omega t, x)
$$
for models A, A' below, or
$$
\varepsilon^2 \partial_{tt}u(t,x) + \partial_{t}u(t,x)- \Delta_x u(t,x)
+ {\mbox{ non-linearity}} = f(\omega t, x)\ ,
$$
for models B, B' below. The equations will be supplemented with the boundary conditions.

In all  models, given a domain $\D$ (as specified in Section~\ref{sec:PDEs})
and denoting by $\overline{\D}$ its topological closure,
$u:\real\times \overline{\D}\rightarrow\real$ is the unknown.
We will require that the following data of the problem are fixed:
\begin{itemize}
\item
The boundary conditions;
\item
$h:\real\times \overline{\D}\rightarrow\real$ to which we refer as the non-linearity;
\item
 $f:\torus^d\times \overline{\D}\rightarrow\real$ (with $\torus^d\equiv (\real/\integer)^d$) to which we refer as the
forcing;
\item
$\omega\in\real^d$, which denotes the frequency of the forcing.
We assume without loss of generality that $\omega$ has rationally independent components,
namely that: $\omega \cdot k \ne 0$ for all $k \in \mathbb{Z}^d \setminus \{0\}$.
\end{itemize}

Of course we assume that the forcing and the non-linearity are such that
the boundary conditions are maintained. We will also need:
\begin{itemize}
\item
Quantitative estimates on the size
of $|\omega \cdot k|^{-1}$ as a function of $|k|$ (which will turn out to be  weaker than the Diophantine or
Bryuno conditions);
\item
A non-degeneracy
condition on the non-linearity.
\end{itemize}
Then we shall prove the following ``meta"-result.

\begin{meta-thm}
Under the above requirements there exists a response solution for the models of the form
above, defined for $\varepsilon$ in an appropriate complex domain: the specific form of such domain
depends on the model considered as well as on the boundary conditions.
\end{meta-thm}

The precise statement of the result requires the introduction of the spaces, the domains
and a precise formulation of the regularity condition that we will give later on, see Sections~\ref{sec:res1}-\ref{sec:main}.
The existence of the solutions of the zeroth order term is discussed in Section~\ref{sec:c0} and Appendix~\ref{sec:c0A}.
The proof for the case of dissipative wave equations is provided in Section~\ref{sec:proof}, while
the modifications of the proof for the other models are given in Section~\ref{sec:others}.
Some arguments supporting that the domains are almost optimal are given in Section~\ref{sec:optimal}.

\section{Models considered and some preliminary assumptions}\label{sec:models}

In this section we present the models we intend to study and we state the required non-degeneracy assumptions.

In what follows we will assume that $\Delta_x$ is a self-adjoint
elliptic  operator of second order; in the physical applications we have in mind
it is the Laplace-Beltrami operator. We will not necessarily
assume that $\Delta_x$ is a constant coefficient operator.

\subsection{PDE's considered}\label{sec:PDEs}
We will consider  PDE's for which the space variables
range in the topological closure of a domain $\D$ and we will look for solutions
 quasi-periodic in time.
The domain $\D$ can be:\\

{\bf D1)} a compact manifold without boundary, for example $\torus^\ell$
(we will refer to this as the periodic case),\\

{\bf D2)} an open, bounded, connected subset of $\real^\ell$ with a $C^\infty$ boundary.
In this case, we will supplement the solutions with either Dirichlet or Neumann boundary conditions.\\

Therefore, we will consider the following (standard) boundary conditions, each one leading to
a different functional setting, which we will specify below:

{\bf D) Dirichlet boundary conditions},

{\bf N) Neumann boundary conditions},

{\bf P) Periodic boundary conditions}.\\

Following the usual practice, we interpret the boundary
conditions as describing a space of solutions: the operator
$\Delta_x$  acts on this space and of course the spectral properties of $\Delta_x$ depend on the
space too. Of course, in order to specify the function space we
 also need to specify a norm. Our treatment will be for spaces of
functions which are
analytic in $t$ and differentiable in $x$.

We will consider four different PDE's: models A, A', B, B' below. Each of them may have entirely different boundary conditions
(Periodic, Dirichlet and Neumann).

{\bf A) The dissipative wave model: }
The first model is a direct analogue of the varactor equation studied, e.g., in
\cite{CallejaCL13, CGV, GentileBD05, GentileBD06, Gentile10a, Gentile10b,CFG2,CFG1};
the model is obtained from the wave equation by adding a singular friction proportional to the velocity:
\beq{dissipativewave}
\partial_{tt} u(t,x)+{1\over\varepsilon} \partial _t u(t,x)-\Delta_x u(t,x) + h(u(t,x),x)=f(\omega t,x)\ .
\eeq

{\bf A') The frequency over-damped model:} We modify the friction of model A, as described by the
following equation
\beq{frequencydamped}
 \partial_{tt} u(t,x)+\frac{1}{\varepsilon} \partial _t \Delta_x u(t,x)
-\Delta_x u(t,x) + h(u(t,x),x)=f(\omega t,x)\ .
\eeq
In this model, which has been studied for instance in \cite{PSM},
the damping is stronger for the spatial  modes with larger spatial frequency.
Indeed, not only the
damping term $\varepsilon^{-1}\, \partial_t \Delta_x u$ in \equ{frequencydamped} is affected by a factor which is
the inverse of the small parameter $\varepsilon$, but it contains also the unbounded operator
$\Delta_x$.

For simplicity, we have considered the case where the $\Delta_x$ appearing in the damping
and in the restoring force are the same operator.
Some slight generalizations are possible, such as taking different
operators for the damping and the restoring force provided they commute. In many physical applications,
it is natural that the operators describing the damping and the force
commute, since they have to be translation invariant and isotropic.

{\bf B) Large stiffness model:} This is a generalization of the model introduced in \cite{Flores13, Flo-Mer-Pel-07}, described
by the equation
\beq{largestiff}
\varepsilon^2 \partial_{tt} u(t,x)+\partial _t u(t,x)-\Delta_x u(t,x) + h(u(t,x),x)=f(\omega t,x)\ ;
\eeq
in \cite{Flores13, Flo-Mer-Pel-07} one can find the specific case
$h(u,x) = \gamma/(1 + u)^2$ with $\varepsilon\in\real$ and with $\gamma\geq 0$ a dimensionless parameter
which provides the relative strengths of electrostatic and mechanical forces.

 Equation \equ{largestiff} models an electrostatically
actuated MEMS (Micro-Electro-Mechanical-Systems) device. Precisely,
the physical interpretation of the model
\eqref{largestiff} is that the restoring force of
the oscillators forming the wave equations is very large.
This type of equations are used to model the deflection of an elastic
membrane suspended above a rigid ground plate, with a voltage source and
a fixed capacitor.
The model represents
the limit of small aspect ratio, when the gap size is small
compared to the device length.
The paper \cite{Flo-Mer-Pel-07} contains a detailed discussion of the motivation.
It is interesting to note that the varactor equation
is somehow a model for the problems considered here.

{\bf B') The modified large stiffness model:}
We modify the non-linearity of model B by assuming that it is of order $\varepsilon$, as described by the
following equation:
$$
\varepsilon^2 \partial_{tt} u(t,x)+\partial _t u(t,x)-\Delta_x u(t,x) + \varepsilon h(u(t,x),x)=f(\omega t,x)\ ;
$$
the above equation appears in the study of MEMS with high aspect-ratios
and/or when the applied tension is high.

\subsection{Regularity assumptions and boundary conditions}
\label{sec:hconditions}
We will require that $f$ is smooth in $x$, satisfies
the boundary conditions, and is analytic in the
variable $\theta\equiv \omega t$.  We will formulate this assumption more
precisely by saying that $f$ belongs to a Hilbert space which we shall call $\A_{\rho,j,m}$;
see the definition in Section~\ref{sec:spaces-findim-Banach}, where
we will impose some restrictions on the  parameters (as we will see $\rho, j$ measure the
analyticity properties and $m$ measures the regularity properties in the space variables).

We will assume that $h$ has some regularity properties too.
Roughly, we will require that
$h$ is analytic in its first argument and differentiable when the second argument ranges over $\D$.
Slightly more precisely, we will require that $h$ is
such that given a function $u \in \A_{\rho,j,m}$,
then $h(u(\theta, x), x)$ is also in $\A_{\rho,j,m}$ and that the
map  $u\mapsto h(u(\cdot),\cdot)$ is differentiable in the sense
of maps in Banach spaces.
We will also require that $h$ satisfies certain geometric conditions ensuring that the
boundary conditions are preserved.
Precisely we make the following requirements.

\bf BCD. \rm For Dirichlet boundary conditions we require that
$h(0,x) = 0$.

\bf BCN. \rm For  Neumann boundary conditions we require that
\begin{equation}
\label{Neumanreq}
n(x)\cdot (D_x h)(u, x ) = 0 \qquad {\rm for\ all\ } x \in \partial \D\ ,\quad u \in \real\ ,
\end{equation}
where $n(x)$ denotes the normal to the domain $\D$ at $x$.
In this way, we obtain that
\[
n(x) \cdot D_x [  h(u(t,x), x) ]=  (D_u h)(u(t,x), x)\ n(x) \cdot D_x u(t,x)
+ n(x)\cdot (D_x h)(u(t,x), x )\ ,
\]
which equals zero if $u(t,\cdot)$ satisfies the Neumann
boundary conditions and \eqref{Neumanreq} holds.  Notice that we have
used that $D_u h$ is one-dimensional.

We anticipate that, besides the above regularity and boundary conditions, we will
also require some non-degeneracy conditions on $h$.

\begin{remark}
In this paper we will construct solutions analytic in time.
The proofs work similarly in
spaces of  functions with Sobolev regularity in time (with high enough Sobolev exponent depending
on the dimension of the frequency), when developing the
theory for finitely differentiable cases (i.e., when the functions $f,h$ are
only assumed to be finitely differentiable).
Of course in this case one can consider only $\varepsilon \in \mathbb{R}$.
\end{remark}

\section{Formulation of the problem and overview of the method for model A}\label{sec:res1}

In this section we go over the method for model $A$ and
reduce it to a fixed point problem. We will present first the formal
manipulations, since they are the motivation for the constructions and
the precise definitions given later on. Notably, the choice of
spaces in Section~\ref{sec:spaces} will be motivated by the need
that the operator appearing in the fixed point equation maps
the spaces into themselves and it is a contraction.

\subsection{Response solutions and formal power series}
Our goal is  to find response solutions of the form
\beq{response}
u_\varepsilon(t,x)=c_0(x)+U_\varepsilon(\omega t,x)\ ,
\eeq
where for each
fixed $\varepsilon$,
 $U_\varepsilon:\torus^d\times\overline{\D}\rightarrow\real$ is at least $O(\varepsilon)$.
We will refer to $c_0$ as the zeroth order term and
omit the index $\varepsilon$ whenever this does not lead to confusion.

We will first show that when we write $U$ as a formal power
series in
$\varepsilon$
\begin{equation}\label{correction}
U_\varepsilon = \sum_{j=1}^\infty \varepsilon^j U_j\ ,
\end{equation}
the coefficients $U_j$ can be formally defined: the appropriate
Banach spaces of functions in which the coefficients actually exist will be specified in
Section~\ref{sec:spaces}. Such Banach spaces will include
regularity properties as well as the boundary conditions.

Inserting \equ{response} in \equ{dissipativewave}, we get that
the function $U_\ep$ must satisfy the equation\footnote{The search of quasi--periodic solutions with frequency $\omega$
having rationally independent components
is equivalent to looking for a solution $u=u(\theta,x)$ of the differential equation in which $\omega t$ is replaced by $\theta$ and  $\partial_t$ is replaced by
$\omega\cdot\nabla_\theta$; this is why we shall study functions of the form $u=u(\theta,x)$.}:
\beq{hullPDE}
(\omega\cdot\nabla_\theta)^2U_\ep(\theta, x)+{1\over\varepsilon}
(\omega\cdot\nabla_\theta)U_\ep(\theta, x)-\Delta_x U_\ep(\theta, x)
-\Delta_xc_0(x)
+h(c_0(x)+U_\ep(\theta, x),x)=f(\theta, x)\ .
\eeq
The solution of \equ{hullPDE} will be the centerpiece of
our treatment. Later, we will develop analogous procedures
for models A', B, B' (see Section~\ref{sec:others}).
We remark that the series expansion \equ{correction} does not contain the term $j=0$; in fact, if we add
a term $U_0$ to the series \equ{correction}, then taking the coefficient of order $\varepsilon^{-1}$ in \equ{hullPDE},
the term $U_0$ would satisfy
$$
(\omega\cdot \nabla_\theta) U_0(\theta,x)=0\ ,
$$
showing that the solution $U_0$, which can be found under the non--resonance assumption on $\omega$,
is independent on $\theta$. However, having written the response function as in \equ{response} with $c_0$ being
the $\theta$-independent part, we conclude that it must be $U_0=0$.

\subsection{Formal solutions of the equation for response functions}
In this section we describe how to
obtain a formal power series solution for
\eqref{hullPDE}. This is  step a) of
the strategy discussed in the introduction.

\subsubsection{Dividing the problem into zeroth order and higher orders}

We introduce the notation:
\begin{eqnarray} \label{linearN}
N_\varepsilon U(\theta, x) &\equiv& [(\omega\cdot\nabla_\theta)^2+{1\over\varepsilon}(\omega\cdot\nabla_\theta)+\Lop] U(\theta, x)\ , \\
\label{Ldefined}
\Lop \eta(x) &\equiv& - \Delta_x \eta(x) +  h'(c_0(x), x) \eta(x)\ ,\\
\label{nonlinearG}
G(U)(\theta,x)&\equiv& h(c_0(x)+U(\theta,x),x)-h(c_0(x),x)- h'(c_0(x),x)\, U(\theta,x)\ .
\end{eqnarray}
Note  that the operator $N_\varepsilon$ depends on $\varepsilon$, whereas
$G$ and $\Lop$ are independent of $\varepsilon$.

\begin{remark}
If the operator $\Lop$ is elliptic and self-adjoint in $L^2_{BC}(\D)$
(namely $L^2(\D)$ with boundary conditions),
then the eigenfunctions constitute a complete set for the
Hilbert space $L^2_{BC}(\D)$ because $\Lop$ has compact resolvent, see
\cite{Helmberg,Davies95}. Similar considerations apply to the analogous
operator $\Lop$ introduced for models A', B.

We will use that the eigenvalues $\lambda_n$ of $\Lop$ are real and that we can characterize
the Sobolev spaces in terms of the coefficients of the eigenfunction
expansions.
\end{remark}

With the above notations and denoting by $\langle \cdot\rangle$ the average with
respect to $\theta$, it is just elementary algebra
to show that the equation \eqref{hullPDE}
is implied by the pair of equations:
\beq{rearranged}
N_\varepsilon  U_\ep(\theta, x) +G(U_\ep) (\theta, x) = f(\theta, x)-\langle f\rangle(x)\ ,
\eeq
\beq{c0}
-\Delta_x c_0(x)+h(c_0(x),x)=\langle f\rangle(x)\ .
\eeq

The reason to divide the equation \equ{hullPDE} into
\equ{rearranged} and \equ{c0} is that \equ{c0} is the leading order
in $\varepsilon$.

Notice that the order $\varepsilon^0$-term in \equ{hullPDE} is
\beq{order0}
(\omega \cdot \nabla_\theta) U_1(\theta,x)-\Delta_xc_0(x)
+h(c_0(x),x)=f(\theta, x)\ .
\eeq
Hence for a solution $U_1$  of \equ{order0} to exist,
it is necessary that the average of \equ{order0} with respect to $\theta$ is
zero (hence equation \equ{c0}). Of course, if $\omega$ satisfies suitable non-resonance conditions and
the functions are smooth, it is indeed possible to obtain $U_1$.
The solution of equation \equ{order0}, which is standard in KAM theory and which can be dealt with
Fourier expansions, will be discussed in Section~\ref{allorders}.
In conclusion the system \equ{rearranged}, \equ{c0} is equivalent to
\eqref{hullPDE}, if we look for formal solutions as in \equ{correction}.

Notice that the system
\equ{rearranged}, \equ{c0} has an upper triangular structure.
In particular, the equation \equ{c0} involves only $c_0$:
once we obtain a solution  $c_0$ of \equ{c0}, we
can substitute it in \eqref{order0} and obtain the solution $U_1$,
and then proceed to higher orders.

The existence of solutions of  \equ{c0} has been
studied extensively in the literature through a great variety of
methods. In Appendix~\ref{sec:c0A} we will present some of the results
available in the literature.

As a result we obtain that  under many circumstances there are
several (often infinitely many)  $c_0$ solving \equ{c0}.
For each of them we will see that (under appropriate non-degeneracy
conditions) we can find a unique solution $U_\varepsilon$ (first
as formal power series and then as analytic function in
a domain).  Hence the upper triangular
system may have many solutions, but the only source of non-uniqueness
is the equation \equ{c0} for $c_0$.

\subsubsection{Preliminary assumptions on the operator $\Lop$}
\label{sec:Lopassumption}

We now specify the spectral properties of
the operator $\Lop$.
In general, to characterize the spectrum of an operator, one needs to specify on
which space it acts. Nevertheless in our case
we assume that the operator is elliptic
and that the domain is compact. In this case the
spectrum is discrete and it is the same in all Sobolev spaces.

The well known reason why the spectrum is independent
on the spaces (\cite{Kato, Helmberg}) is that, when the operator is elliptic, the solutions
 gain regularity: this translates to the fact that the
resolvent is compact on any Sobolev space and hence, for all the Sobolev spaces the spectrum is
just a discrete set of eigenvalues with finite
multiplicity.  Furthermore, again by regularity theory, the
eigenfunctions are very smooth, so that they are
eigenfunctions in all Sobolev spaces. Therefore the spectrum is the same in
all Sobolev spaces. The following assumptions {\bf H1}-{\bf H2} will be
requested for model A as well as for models A', B, B' once the operator $\Lop$ is
suitably defined.

\begin{itemize}
\item[{\bf H1}] The spectrum of $\Lop$  is discrete and its
eigenvalues $\lambda_n$ satisfy:
\begin{equation*}\label{generalspec}
0 \le \lambda_{n} \le  \lambda_{n+1}\ , \quad \forall \ n\geq1
\end{equation*}
and the multiplicity of each eigenvalue is finite, possibly increasing with $n$;
\item[{\bf H2}] The smallest eigenvalue is positive: $\lambda_{1}>0$.
\end{itemize}

In the case of models A', B' we will also assume the following hypotheses
on the operator $-\Delta_x$.

\begin{itemize}
\item[{\bf H1'}] The spectrum of $-\Delta_x$  is discrete and its
eigenvalues $\lambda^\Delta_n$ satisfy
\begin{equation*}\label{generalspec}
0 \le \lambda^\Delta_{n} \le  \lambda^\Delta_{n+1}\ , \quad \forall n\geq1
\end{equation*}
and   the  multiplicity of each eigenvalue is finite, possibly increasing with $n$.
\item[{\bf H2'}] The smallest eigenvalue is positive: $\lambda_1^\Delta>0$.
\end{itemize}

\begin{remark}\label{rmk:H}
Note that a consequence of {\bf H1} and {\bf H1'} is that there is
an orthonormal basis of  eigenfunctions $\Phi_n$ for the operator $P= \Lop \, \, or\, -\Delta_x$ in $L^2(\D)$, such that
\[\label{eigenfuncitons}
P \Phi_n = \lambda_n^{(P)} \Phi_n \qquad \textrm{ for }\ n = 1, 2, ..., \qquad
{\rm with}\ \ \lambda_n^{(P)}\equiv\left\{
\begin{aligned}
&\lambda_n \qquad {\rm if}\ P=\Lop\ ,\\
&\lambda_n^\Delta\qquad {\rm if}\ P=-\Delta_x\ .
\end{aligned}
\right.
\]
\end{remark}

\begin{remark}
We  note that {\bf H1} and {\bf H2} are in turn assumptions on
$h$, $c_0$.
In some arguments, we will need to assume only {\bf H1},
but in order to get the crucial estimates on the ``small divisors''
(and hence to obtain the final result, see Theorem \ref{convergence}),
we will need to assume that there is the spectral gap  in {\bf H2}.
\end{remark}

The above assumptions can be slightly modified; in particular,
the previous assumptions {\bf H1}-{\bf H2}-{\bf H1'}-{\bf H2'} can be
extended to encompass the case of a continuous spectrum (see Remark~\ref{rem:cont} below).

\subsubsection{The nonlinear term and the boundary conditions}

We start by noticing that $G$ is the functional analogue of
the nonlinear term used in \cite{CallejaCL13}, provided of
course that the operator $G$ is defined from some appropriate space to itself.

In this section we will just check that, if we assume
$h$ to satisfy the conditions {\bf BCD}, {\bf BCN} in
Section~\ref{sec:hconditions}, then
the operator $G$ preserves the spaces of functions satisfying these conditions.

In the case of periodic boundary conditions, there is nothing to check.

For  Dirichlet boundary conditions we observe that
if $x \in \partial \D$ and $c_0, U$ satisfy the Dirichlet boundary conditions,
then
$c_0(x) = 0$,  $U(\theta, x) = 0$ and, hence
$
G(U)(\theta, x) = 0$.

For  Neumann boundary conditions we observe that, if
$h$ satisfies \eqref{Neumanreq}, then we just need to check that
\begin{equation}\label{Neumanreqder}
n(x) \cdot D_x [  h'(c_0(x), x)  U(\theta, x) ] = 0 \qquad \forall x \in \partial \D,\ \theta \in \torus^d\ .
\end{equation}
The left hand side of \eqref{Neumanreqder} can be written as the sum of three pieces, i.e.
for $x \in \partial \D$,
\begin{equation}\label{Neumanexp}
\begin{split}
n(x) \cdot &D_x [  h'(c_0(x), x)  U(\theta, x) ]
=   h''( c_0(x), x)   (n(x) \cdot D_x c_0(x))  U(\theta, x)\\
&+n(x) \cdot (D_x h')(c_0(x), x) U(\theta, x)
+h'(c_0(x), x)\ n(x) \cdot (D_xU)(\theta, x)\ .
\end{split}
\end{equation}
The first and second terms in the right hand side of \eqref{Neumanexp}
vanish, since we impose that
$c_0(\cdot)$ satisfies Neumann boundary conditions and
$h$ satisfies \eqref{Neumanreq}.
Therefore, if $U(\theta, \cdot)$ satisfies Neumann boundary conditions,
the last term in \eqref{Neumanexp} will also be equal to zero.

\subsection{The higher order equations}\label{sec:highorder}

We are looking for a formal power series solution of \equ{rearranged}. Assume that we solved \eqref{c0},
insert \eqref{correction} into \eqref{rearranged} and expand the power
series (this requires enough regularity for
the function $h$ which we will make explicit later).

Equating the coefficients of the same power $\varepsilon^N$ for $N \ge 0$, we
obtain the following recursive equations for $N\geq 0$:
\beqa{orderN}
(\omega\cdot\nabla_\theta)U_{N+1}(\theta, x)
&+&
(\omega\cdot\nabla_\theta)^2U_N (\theta, x)-\Delta_x U_N (\theta, x)
+h'(c_0(x),x)\ U_N(\theta, x)\nonumber\\
&=& S_N(c_0(\theta, x), U_1(\theta, x), \ldots, U_{N-1}(\theta, x))\ ,
\eeqa
where $S_N$ is a polynomial expression in $U_1,\ldots, U_{N-1}$ obtained
by applying the Taylor theorem to order $N$ in the equation \equ{hullPDE} and gathering terms.

We  think of \eqref{orderN} as an equation for $U_{N+1}$,
given all the previous terms of the expansion.
Of course  we need to assume that $\omega\cdot k\neq0$ and indeed that it is not too small as $|k|$ increases. Provided that
\begin{equation}\label{compatibility}
\langle\Delta_x U_N (\theta, x)-
h'(c_0(x),x)\ U_N(\theta, x) +  S_N(c_0, U_1, \ldots, U_{N-1})(\theta, x)
\rangle  =
0\ ,
\end{equation}
we can find $U_{N+1}$ which is unique up to the choice of
an additive function of $x$ alone.

Hence, as it is standard when dealing with Lindstedt series, proceeding
by induction we assume that we have determined
$U_1,\ldots U_N$ and then using \eqref{orderN} we can determine $U_{N+1}$ up to
an additive function of $x$: such a function is obtained by solving \eqref{compatibility}
and this can be done because of {\bf H2}.

Sufficient conditions for the existence of an \sl approximate solution \rm provided by a truncation to order $N$ of the
series expansion \equ{correction} are given in Theorem~\ref{allorders}; see also Section~\ref{sec:prescribed}
for a discussion of the existence of an approximate solution to a finite order by solving \equ{orderN}
(compare with \equ{epsilonexpansion} in Section~\ref{sec:prescribed}).

\subsection{Formulation of the fixed point problem
equivalent to \equ{hullPDE}}

As we shall see, the operator $N_\varepsilon$ in \equ{linearN}
is  invertible in the spaces $\A_{\rho, j, m}$ alluded above, if
 $\varepsilon$ ranges in  a suitable domain, so that
\eqref{rearranged} can be rewritten as
\begin{equation} \label{fixed point}
U_\ep(\theta, x) = N_\varepsilon^{-1}\ [-G(U_\ep)(\theta, x) + f(\theta, x)-\langle f\rangle (x) ]
\equiv \T_\ep(U_\ep)(\theta,x)\ ,
\end{equation}
where we have introduced for convenience the operator $\T_\ep$;
we will show that \eqref{fixed point} can be solved by a contraction
mapping argument.

Therefore one of the crucial points of the strategy will be to study the invertibility of
$N_\varepsilon$ and give quantitative estimates on its inverse, notably the Lipschitz constants.
In order to do so,
we provide a uniform lower bound on the eigenvalues of $N_\varepsilon$
(which will depend on $\varepsilon$), using
the assumption {\bf H2} on the eigenvalues of $\Lop$.
By carefully examining such $\varepsilon$-dependent bounds, we
will show that, for $\varepsilon$ in a suitable domain,
the operator  appearing in the right hand side of \eqref{fixed point}
sends a ball centered at the approximate solution (given by the perturbative
expansion) into itself and that it is a contraction inside this ball.  Hence, the fixed
point can be obtained by iteration, starting from the approximate solutions.

We think at the iterative procedure as taking a function analytic in
$\varepsilon$ and producing another analytic function of
$\varepsilon$.
We will show that the convergence is uniform for $\varepsilon$ in a suitably
chosen complex  domain.
Then it is a standard argument that the
limit is an analytic function of $\varepsilon$ in this domain.

The contraction mapping argument is classical; however it requires
to use spaces in which we have sharp estimates, so that we do not
lose any regularity and we obtain that the operator in \equ{fixed point}
sends the spaces into themselves.

Of course, once we have defined
the spaces, we will have to
justify  the formal manipulations, such as  the existence of functional
derivatives. This amounts to making regularity assumptions on the term $h$,
which justify the use of the Taylor's theorem up to order $N$ for
the composition operator.

\subsection{Choice of spaces}
\label{sec:spaces}
In this section we present the
spaces we will use. We discuss some of their elementary properties in Appendix \ref{app.spaces},
where we also add a remark about the continuous spectrum.

The  leading principle is that the norms of the functions can be expressed
in terms of  generalized Fourier coefficients, namely the coefficients associated
to the basis given by the product of the Fourier basis in $\theta$ and the eigenfunctions of $\Lop$
with boundary conditions in $x$.

This principle allows us to estimate rather easily the inverse of the linear
operator $N_\varepsilon$ in \eqref{linearN} just by estimating its eigenvalues, because
we are allowed to use the base in which $N_\varepsilon$ is diagonal.

 We
also need  the spaces to have other properties  allowing us to control
the non-linear terms, such as Banach algebra properties
and properties of the composition operator, so that we can study the operator $G$.
Since we want to obtain analyticity in $\varepsilon$, we will
also need spaces of analytic functions and, in order to simplify the
analysis, we require that they are Hilbert spaces.
Note that we think of the functions in $x$ as  ``scalars'' in analogy to what happens in \cite{CallejaCL13};
hence, it is natural to consider Hilbert spaces of analytic
functions in $\theta$ taking values in another Hilbert space of
functions of $x$.

The choice of the spaces presented here satisfies such properties and leads
to simple proofs. Of course we are not claiming that the choices
we make are optimal and it is quite plausible that other choices
(e.g., analytic functions in both variables) could lead to
better regularity. The main problem in using spaces of analytic functions in $x$
is that it is not clear to us how to express the analyticity of a function in terms of the
coefficients of the expansions in eigenvalues.

We will present several equivalent norms, since some of the properties
of the space will be easier to verify in one norm than in another.
Henceforth, given two (finite or infinite
dimensional) equivalent norms $\|\cdot\|$, $\|\cdot\|'$, we write $\|\cdot\|\cong\|\cdot\|'$ .

\subsubsection{Sobolev spaces with boundary conditions}

In this section we introduce the Sobolev-like spaces which we will
use; we will define them only for
indices $m \in 2 \nat$, since this is enough for
our purposes. The advantage is that, for these indices,
it is possible to give particularly simple characterizations of
the norm in terms of the eigenfunction expansions. Using several characterizations of the norms
allows one to obtain simple proofs of Lipschitz properties of operators.

For functions $S:\overline{\D} \to \complex$
satisfying the corresponding boundary conditions and for $m\in2\nat$,
we define the family of equivalent norms as
\beq{SobolevBC}
\|S\|_{H^m_\Lop} = \|\Lop^{m/2} S\|_{L^2}\ .
\eeq
If $S(x) = \sum_{n=1}^\infty \widehat S_n \Phi_n(x)$ with $\widehat S_n \in \real$ and $\Phi_n$ as
in Remark  \ref{rmk:H}, then the Sobolev norm \eqref{SobolevBC} is given by
\[
\|S\|^2_{H^m_\Lop} = \sum_{n=1}^\infty \lambda_n^m |\widehat S_n|^2\ ,
\]
where $\lambda_n$ are the eigenvalues of $\Lop$ (recall that the $\Phi_n$'s form
a basis of eigenfunctions of $\Lop$).
Since $\Lop$ is elliptic, by G\aa rding's inequality
(see \cite{TaylorII}-Theorem 6.1 of Chap. 7), we have
\beq{equivalenceS}
\|S\|_{H^m_\Lop}\cong \|S\|_{H^m}\ ,
\eeq
where $\|\cdot\|_{H^m}$
is the standard Sobolev norm, namely
\[\|S\|_{H^m} = \|(\Delta_0+1)^{m/2} S\|_{L^2}\]
with $\Delta_0$ the standard constant coefficient Laplacian
and we are considering $S$ satisfying the specified boundary conditions.

The spaces  $H^m_{\Lop}(\D)_{BC}$ and $H^m(\D)_{BC}$
are the completion of $C^\infty_0$ -- the set of $C^\infty$ functions
with compact support contained in the interior of $\D$ --
under the above norms.
For notational convenience we shall not write explicitly the dependence on
the boundary conditions unless needed.

It is well known that for $m>\ell/2$
the Sobolev spaces satisfy the Banach algebra property (\cite{TaylorIII}) and hence the equivalence of the norms
$\|\cdot\|_{H^m_{\Lop}}$ and $\|\cdot\|_{H^m}$ in \equ{equivalenceS} implies for every $S_1,S_2\in H^m_{\Lop}$:
$$
\|S_1 S_2\|_{H^m_\Lop} \leq C  \|S_1\|_{H_\Lop^m} \|S_2\|_{H_{\Lop}^m}\ ,\qquad m>{\ell\over 2}
$$
for some constant $C>0$.

When  $m > \ell/2$ the Sobolev embedding
theorem says that the functions in $H^m$ are continuous, so that the Dirichlet
boundary conditions have classical meaning.

Similarly, when $m > \ell/2+1$, the gradient of functions in
$H^m$ are continuously differentiable. Hence, the Neumann boundary
conditions have classical meaning.

\subsubsection{Spaces of analytic functions of complex variables
taking values into Banach   spaces}
\label{sec:spaces-findim-Banach}

We introduce
domains that consist of a strip around the torus $\torus^d$
in the imaginary
direction. We will consider analytic functions in these domains.

\begin{definition}\label{complexstrip}
Given $\rho > 0$, we denote by $\torus_\rho^d$ the set
\[\torus_\rho^d = \left \{\theta \in (\complex/\integer)^d\,:\, {\rm Re}(\theta_j)
\in \torus\ ,\ \  |\Im(\theta_j)|\leq \rho\ ,\ \  j = 1, ... ,d\right \}\ .\]
\end{definition}

When we consider functions of $\theta \in \torus_\rho^d$ and $x\in\overline{\D}$, we
can think of them as functions from $\torus^d_\rho$ into $H^m_\Lop$ which are analytic
\footnote{
When we consider domains which are closed with  a smooth boundary, we
refer to analytic functions as functions which are
analytic in the interior and that extend continuously to the boundary. For us, using domains which are
compact is slightly more convenient in order to quote embedding theorems, etc.
Nevertheless, in order to avoid repetitions, we omit that analyticity is meant only for
the interior and that we assume the extension to the boundary.}. The spaces which we will consider
are the standard Bargmann spaces taking values into $H^m_\Lop$.

Given a function
$u=u(\theta,x)$  which we expand as
\beq{generalizedFourier}
u(\theta, x) = \sum_{k\in\integer^d} e^{2 \pi i k\cdot\theta} \hat u_k(x)
= \sum_{k\in\integer^d,n\geq 1} e^{2 \pi i k\cdot\theta} \Phi_n(x) \hat u_{k,n}\ ,
\eeq
we will consider the space of analytic functions of
$\theta$ endowed with the $H^j(\torus^{d}_\rho; H^m_\Lop)$ norm defined below.
We emphasize that we are considering $\torus^d_\rho$ as a $2d$-dimensional real
manifold with boundary. Again, for simplicity, we just consider the even Sobolev exponents $j$.

Precisely, for $\rho>0$, $j,m \in 2 \nat$,
setting $\Delta_\theta\equiv \sum_{n=1}^d \nabla_{\theta_n}\nabla_{\bar\theta_n}$
(where the bar denotes complex conjugation), we define the $H^j(\torus^d_\rho;H^m_\Lop)$ norm as:
\beq{analyticnorm}
\begin{split}
\|u\|^2_{\rho,j, m} & = \int_{\torus^d_\rho} \|(\Delta_\theta +1)^{j\over 2} u(\theta, \cdot)  \|_{H^m_\Lop}^2\ d^{2d} \theta  \\
&= \int_{\torus^d_\rho} \|\Big( \sum_{n=1}^d \nabla_{\theta_n}  \nabla_{\bar \theta_n} +1\Big)^{j\over 2}
u(\theta, \cdot) \|_{H^m_\Lop}^2\ d^{2d} \theta\ .  \\
\end{split}
\eeq
We denote by $\A_{\rho,j, m}$ the space of functions analytic in $\theta$ whose norm $\|\cdot\|_{\rho,j,m}$ is finite.
Note that $\A_{\rho,j,m}$ are Hilbert spaces, since the norm \eqref{analyticnorm}
clearly comes from the
inner product
$$
\langle u, v \rangle = \int_{\torus_\rho^d} \langle u,
(\Delta_\theta + 1)^j v \rangle_{H^m}\ d^{2d} \theta\ .
$$
Moreover they are complete, since the limit in the $\|\cdot\|_{\rho,j,m}$-norm
of analytic functions is an analytic function (\cite{Reed-Simon-Vol1}).

\begin{remark}
We think of a function $u \in \A_{\rho,j, m}$ as an analytic function
from
$\torus^d_\rho$ into $H_\Lop^m$, say $\theta \to u(\theta, \cdot)$.
In this way, the problems considered here look closer to the formulation of the varactor
problem considered in \cite{CallejaCL13}. The PDE looks
formally like an ODE in $H^m_\Lop$ and the
response solutions will be analytic functions from the torus
into $H^m_\Lop$. For sufficiently high $m$, these will be classical functions which are
analytic in the $t$ variable and differentiable in the variable $x$. Hence, they will be
classical solutions for the PDE.
\end{remark}

\section{Precise statement of the results}\label{sec:main}

\subsection{Approximate solutions of the fixed point problem}

For all models we can give a definition of ``approximate solution" as follows.

\begin{definition}\label{def:approx}
Let us consider a family of functional equations
\begin{equation}\label{supergeneral}
{\mathcal F}_\ep(U)=0\ ,
\end{equation}
where ${\mathcal F}_\ep:{\mathcal A}_{\rho,j,m}\rightarrow{\mathcal
A}_{\rho,j,m}$ is an operator that maps $\ep$-dependent families into
families (of course the operator ${\mathcal F}_\ep$ may have an explicit $\ep$-dependence).
We say that
\begin{equation}\label{truncated}
U_{\varepsilon}^{(M)}=\sum_{k=0}^M\varepsilon^k U_k
\end{equation}
for some $M\in\integer_+$ is an approximate solution up to order $M$
of \eqref{supergeneral}, if
$$
\| {\mathcal F_\ep}(U_\varepsilon^{(M)})\|_{\rho,j,m}=O(\varepsilon^{M+1})\ .
$$
\end{definition}

\subsection{Main results}

Our main results are provided by the following
Theorems~\ref{thm:main}, \ref{thm:allorders}, \ref{convergence}.

Theorem~\ref{thm:main} is based on a contraction mapping argument and it states the existence of a
solution, provided we assume the existence of an approximate solution.

Theorem~\ref{thm:allorders} gives  sufficient
conditions for the existence of an approximate solution up
to any order under some non--resonance assumptions on the frequency: the higher is the
order of approximation we want to reach, the more restrictive will be the condition on the frequency.

Theorem~\ref{convergence}  summarizes the results above, i.e. it gives the existence of an analytic
 solution under the requirements of
Theorem~\ref{thm:allorders}, which provides the approximate solution.
The proof of Theorem~\ref{convergence} relies on applying Theorem~\ref{thm:main}
to the approximate solutions provided by Theorem~\ref{thm:allorders}.

In the Theorems \ref{thm:main}, \ref{thm:allorders}, \ref{convergence} we will
assume that $f$ belongs to the space of functions $\A_{\rho,j,m}$ as in Proposition~\ref{properties}, which
ensures the validity of the Banach algebra property.

\begin{theorem}\label{thm:main}
Assume that $f$ is in $\A_{\rho,j,m}$ for $\rho>0$, $j,m \in 2 \nat$, $j>d$, $m>\ell/2$
($m>\ell/2+1$ in the case of Neumann boundary conditions).
Let $h:\mathcal{B} \times \overline{\D} \to \complex$ with $\mathcal{B}\subset\complex$ open set,
and let $\D$ be either of the form {\bf D1} or {\bf D2} as in Section~\ref{sec:PDEs}. We assume that $h$
is analytic in $\mathcal{B}$ and $C^m(\D)\cap C(\overline{\D})$ in $x$.

Consider the models A, A', B, B' with D, N, P boundary conditions; assume that the hypotheses
{\bf H1}-{\bf H2} are satisfied (see Section~\ref{sec:Lopassumption}),
and that the non-linearity $h$ satisfies BCD or BCN (See Section~\ref{sec:hconditions}) in case of D or N boundary conditions, respectively (depending on the boundary conditions considered for the equation).
For models A', B' assume also {\bf H1'}-{\bf H2'}.

For model A
assume that the zeroth order term $c_0$ (see \equ{c0})
admits a solution (some sufficient conditions are given in Appendix~\ref{sec:c0A}) and that
for some $M\in\mathbb{N}$, $M\geq 2$, there exists an approximate solution in $\varepsilon$ of \equ{hullPDE} up
to order $M$.

Let $\varepsilon$ be in the domain $\Omega_B = \cup_\sigma \Omega_{\sigma,B}$ with $B>B_0$
for some $B_0>0$ sufficiently large, $\sigma>0$ sufficiently small, where
\begin{equation}\label{parabolicdomain}
\Omega_{\sigma,B} \equiv \{\varepsilon=\xi+i\eta \in \complex\, : \, \xi>
B\, \eta^2 \, , \, \sigma < |\varepsilon| < 2 \sigma\}
\end{equation}
and $\theta$ in the strip of size $\rho>0$
$$
\torus_\rho^d \equiv \{ \theta \in (\complex/\integer)^d \, : \,
{\rm Re}(\theta_j)\in\torus\ ,\quad |\Im(\theta_j)|\leq\rho\ ,
\quad j=1,...,d\}\ .
$$
Then, there exists a function $U_\ep=U_\ep(\theta,x)\in\A_{\rho,j,m}$, which provides
an exact solution of \equ{hullPDE}.

For model A'
assume that the zeroth order term $c_0$ (see \equ{c0})
admits a solution (some sufficient conditions are given in Appendix~\ref{sec:c0A}) and that
for some $M\in\mathbb{N}$, $M\geq 2$, there exists an approximate solution in $\varepsilon$ of
\beq{hullPDEC}
(\omega\cdot\nabla_\theta)^2 U_\ep(\theta,x)+{1\over\varepsilon} (\omega\cdot\nabla_\theta)\ \Delta_x U_\ep(\theta,x)-\Delta_x U_\ep(\theta,x)
-\Delta_x c_0(x)+h(c_0(x)+U_\ep(\theta,x),x)=f(\theta,x)\ .
\eeq
Let $\varepsilon$ be in a domain of the form \equ{parabolicdomain}. Then,
there exists a function $u=u(\theta,x)=c_0(x)+U_\ep(\theta,x)$ as in \equ{response}, belonging to
$\A_{\rho,j,m}$, which provides an exact solution of \equ{hullPDEC}.

For model B
we assume
that there exists an approximate solution of
\beq{hullPDEB}
\varepsilon^2 (\omega\cdot\nabla_\theta)^2 U_\ep(\theta,x)+(\omega\cdot\nabla_\theta)U_\ep(\theta,x)-\Delta_x U_\ep(\theta,x)
+h(U_\ep(\theta,x),x)=f(\theta,x)
\eeq
up to order $M$ with $M\in\mathbb{N}$, $M\geq 2$.
Assuming that $\varepsilon$ belongs to the domain
\beq{omd}
\Omega_\delta\equiv \{\varepsilon=\xi+i\eta \in \complex\, : \, {\rm Re}(-\varepsilon^2)\geq\delta\}\cup\{
\varepsilon = \xi \in \real \,:\, \delta<|\xi|<2\delta\}
\eeq
for some $\delta>0$, then
there exists a function $U_\ep=U_\ep(\theta,x)\in\A_{\rho,j,m}$,
which provides an exact solution of \equ{hullPDEB}.

For model B'
assume that the zeroth order term
admits a solution (see Section~\ref{sec:c0Bp}) and that
for $M\in\mathbb{N}$, $M\geq 2$, there exists an approximate solution up to order $M$
in $\A_{\rho,j,m}$ of
\beq{hullPDEBp}
\varepsilon^2 (\omega\cdot\nabla_\theta)^2 U_\ep(\theta,x)+(\omega\cdot\nabla_\theta)U_\ep(\theta,x)-\Delta_x U_\ep(\theta,x)
+\varepsilon h(U_\ep(\theta,x),x)=f(\theta,x)\ .
\eeq
Assuming that $\varepsilon$ belongs to $\Omega_\delta$ as in \equ{omd} for some $\delta>0$, then there
exists a function $U_\ep=U_\ep(\theta,x)\in\A_{\rho,j,m}$, which provides an exact solution of \equ{hullPDEBp}.

In all the cases above, the solution $U_\ep$ is analytic in the considered domains as a function of $\varepsilon$ and it is
asymptotic to the approximate solution.

\end{theorem}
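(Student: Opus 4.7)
The plan is to treat all four models by the same contraction-mapping framework, differing only in the form of the linearized operator $N_\varepsilon$ and the $\varepsilon$-domain on which $N_\varepsilon$ can be uniformly inverted. For each model I would write $U_\varepsilon = U_\varepsilon^{(M)} + V_\varepsilon$, substitute into the equation at hand (e.g.\ \eqref{hullPDE} for model A, and the analogues \eqref{hullPDEC}, \eqref{hullPDEB}, \eqref{hullPDEBp} for the others), and rearrange to obtain a fixed point problem $V_\varepsilon = \T_\varepsilon(V_\varepsilon)$, where
\[
\T_\varepsilon(V) := N_\varepsilon^{-1}\bigl[-\mathcal{F}_\varepsilon(U_\varepsilon^{(M)}) - \bigl(G(U_\varepsilon^{(M)}+V) - G(U_\varepsilon^{(M)})\bigr)\bigr].
\]
The goal is to show that $\T_\varepsilon$ is a contraction on a ball $\B_r \subset \A_{\rho,j,m}$ centered at $0$, with radius $r$ a suitable positive power of $|\varepsilon|$, uniformly in $\varepsilon$ in the prescribed domain.

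The main obstacle, and the step around which everything else pivots, is to prove a uniform operator-norm bound on $N_\varepsilon^{-1}$ of the form $\|N_\varepsilon^{-1}\|_{\A_{\rho,j,m}\to\A_{\rho,j,m}}\leq C|\varepsilon|^{-\alpha}$ for some $\alpha\geq 0$. Since $N_\varepsilon$ is diagonal on the generalized Fourier basis $\{e^{2\pi i k\cdot\theta}\Phi_n(x)\}$ of Remark~\ref{rmk:H}, this reduces to a uniform lower bound on its eigenvalues. For model A these are $\mu_{k,n}(\varepsilon) = -(2\pi\omega\cdot k)^2 + 2\pi i (\omega\cdot k)/\varepsilon + \lambda_n$: the mode $k=0$ is controlled by {\bf H2} since $\mu_{0,n}=\lambda_n\geq \lambda_1>0$, while for $k\neq 0$, writing $\varepsilon = \xi+i\eta$ and using $\xi>B\eta^2$ on $\Omega_{\sigma,B}$, one argues by cases: when $|\omega\cdot k|$ is small the real part $\lambda_n-(2\pi\omega\cdot k)^2$ furnishes the lower bound (again by {\bf H2}), while when $|\omega\cdot k|$ is not small the imaginary contribution $2\pi(\omega\cdot k)\xi/|\varepsilon|^2$ takes over, since the parabolic geometry of $\Omega_{\sigma,B}$ combined with $\sigma<|\varepsilon|<2\sigma$ keeps $\xi/|\varepsilon|^2$ bounded below. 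Crucially this nowhere uses a Diophantine lower bound on $|\omega\cdot k|$, only the non-resonance $\omega\cdot k\neq 0$. Model A' is analogous but with $\lambda_n^\Delta$ appearing in the $\varepsilon^{-1}$-coefficient, which is where {\bf H1'}-{\bf H2'} are used. For models B and B' the eigenvalues have the form $-\varepsilon^2(2\pi\omega\cdot k)^2 + 2\pi i(\omega\cdot k) + \lambda_n^{(P)}$, and the defining condition ${\rm Re}(-\varepsilon^2)\geq \delta$ of $\Omega_\delta$ immediately yields a positive real-part contribution, so the bound is direct.

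With the bound on $N_\varepsilon^{-1}$ in place, the rest is standard. The Banach algebra and composition-operator properties of $\A_{\rho,j,m}$ (valid for $j>d$, $m>\ell/2$, enhanced to $m>\ell/2+1$ for Neumann boundary conditions) provide a local Lipschitz estimate for $G$ of the form
\[
\|G(U_\varepsilon^{(M)}+V_1)-G(U_\varepsilon^{(M)}+V_2)\|_{\rho,j,m}\leq C\bigl(\|U_\varepsilon^{(M)}\|_{\rho,j,m}+r\bigr)\|V_1-V_2\|_{\rho,j,m},
\]
while the approximate-solution hypothesis gives $\|\mathcal{F}_\varepsilon(U_\varepsilon^{(M)})\|_{\rho,j,m}=O(|\varepsilon|^{M+1})$. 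Choosing $r$ as a suitable power of $|\varepsilon|$ and taking $M$ large enough relative to the exponent $\alpha$ produced by the $N_\varepsilon^{-1}$ estimate, one checks that $\T_\varepsilon$ sends $\B_r$ into itself and is a contraction there with ratio bounded away from $1$ uniformly in $\varepsilon$. Analyticity in $\varepsilon$ follows at no extra cost: $\varepsilon\mapsto N_\varepsilon^{-1}$ and $\varepsilon\mapsto U_\varepsilon^{(M)}$ are analytic on the relevant domain, so $\T_\varepsilon$ is an analytic family of contractions and its iterates converge uniformly to an analytic limit. Finally, the resulting bound $\|U_\varepsilon - U_\varepsilon^{(M)}\|_{\rho,j,m}=O(|\varepsilon|^{M+1-\alpha})$, applied for every $M$ for which Theorem~\ref{thm:allorders} produces an approximate solution, yields the asserted asymptotic expansion.
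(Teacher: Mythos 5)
Your architecture is the same as the paper's (rewrite the equation as a fixed point problem for the correction to the approximate solution, invert the diagonal operator $N_\varepsilon$ via a lower bound on its eigenvalues over the prescribed $\varepsilon$-domain, contract in the Banach algebra $\A_{\rho,j,m}$, and get analyticity in $\varepsilon$ from uniform convergence of the iterates). However, the pivotal step for models A, A' --- the lower bound on the eigenvalues $\mu_{k,n}(\varepsilon)=-(2\pi\omega\cdot k)^2+2\pi i(\omega\cdot k)/\varepsilon+\lambda_n$ over $\Omega_{\sigma,B}$ --- is argued incorrectly. Since $i/\varepsilon=(\eta+i\xi)/|\varepsilon|^2$, the damping term contributes $2\pi(\omega\cdot k)\,\eta/|\varepsilon|^2$ to the \emph{real} part of $\mu_{k,n}$, not only $2\pi(\omega\cdot k)\,\xi/|\varepsilon|^2$ to the imaginary part; your case split drops this contribution. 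It can cancel $\lambda_n-(2\pi\omega\cdot k)^2$ exactly: at a boundary point with $\eta\approx\sigma$, $\xi\approx B\sigma^2$, and a mode with $2\pi\omega\cdot k\approx-\eta\lambda_1$ (available to any accuracy since $\{\omega\cdot k\}$ is dense in $\real$), one gets ${\rm Re}\,\mu_{k,1}\approx 0$ and $|\mu_{k,1}|\approx\lambda_1 B\sigma$. So the claim that {\bf H2} controls the small-$|\omega\cdot k|$ modes through $\lambda_n-(2\pi\omega\cdot k)^2$ is false as stated, and no $O(1)$ lower bound holds; the correct bound is $|\mu_{k,n}|\geq c\,B|\varepsilon|$, i.e.\ $\alpha=1$ in your notation. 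This cancellation is not a technicality: it is the reason the bound degrades by a factor $|\varepsilon|$ and the reason the domain must be parabolic (the paper's requirement $\xi\gtrsim\sigma^2$ on $\Omega_{\sigma,B,\alpha}$ forces $\alpha\le 2$). The paper handles it by bounding $|\varepsilon\mu_{k,n}|^2=\xi^2(\lambda_n-\tau^2)^2+[\eta(\lambda_n-\tau^2)+\tau]^2$ (with $\tau=2\pi\omega\cdot k$) from below by $C\xi^2$, splitting according to whether $\tau^2$ is close to $\lambda_n$ rather than whether $|\tau|$ is small.

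Once $\alpha=1$ is in hand the rest of your argument closes essentially as in the paper: $\|N_\varepsilon^{-1}\|\le CB^{-1}\sigma^{-1}$, the Lipschitz constant of $G$ on the relevant ball is $O(\sigma)$ because $DG(0)=0$, so the contraction factor is $O(B^{-1})$, and $M\geq 2$ (not ``$M$ large relative to $\alpha$'') suffices, as the theorem requires. Two smaller points: for models B, B' the bound on $\{{\rm Re}(-\varepsilon^2)\geq\delta\}$ is indeed immediate as you say, but the real-$\varepsilon$ component $\{\delta<|\xi|<2\delta\}$ of $\Omega_\delta$ is not covered by that inequality and needs the separate minimization in $\tau$ that the paper carries out; and for model A' the multiplier is $-(2\pi\omega\cdot k)^2+2\pi i(\omega\cdot k)\lambda_n^\Delta/\varepsilon+\lambda_n$, so the same cancellation phenomenon must be handled with $\lambda_n^\Delta$ in place of the factor $1$, which is where {\bf H2'} enters.
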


Note that Theorem~\ref{thm:main} involves mainly regularity assumptions
and the requirement that there exist approximate solutions at least of order $2$.
Sufficient conditions for the existence of an approximate solution
(given by the expansion to any arbitrary order) are provided by the following result.

\begin{theorem}\label{thm:allorders}
Assume that $f$ is in $\A_{\rho,j,m}$ for $\rho>0$, $j,m \in 2 \nat$, $j>d$, $m>\ell/2$
($m>\ell/2+1$ in the case of Neumann boundary conditions).
Let $h:\mathcal{B} \times \overline{\D} \to \complex$ with $\mathcal{B}\subset\complex$ open set,
and let $\D$ be either of the form {\bf D1} or {\bf D2}. We assume that $h$
is analytic in $\mathcal{B}$ and $C^m(\D)\cap C(\overline{\D})$ in $x$.

Consider the models A, A', B' with either D, N, P boundary conditions and assume that $h$
satisfies either BCD, BCN in case of D, N boundary conditions, respectively (depending on the boundary
condition considered for the equation). Assume that the zeroth order term admits a solution
(see, respectively, Appendix~\ref{sec:c0A} for models A, A' and Section~\ref{sec:c0Bp} for model B'). Furthermore:

$i)$ Assume that there exists $M \in \nat$, such
that
\begin{equation}\label{nonresonance}
|k|^{-1}  \log |\omega \cdot k|^{-1} \leq \frac{2 \pi \rho}{M}
\quad \quad \forall k \in \mathbb{Z}^d \setminus \{0\}\ .
\end{equation}
Then, there exists an approximate solution in $\varepsilon$ of \equ{hullPDE}, \equ{hullPDEC}, \equ{hullPDEBp} up to order $M$.
In particular, if
\beq{nonresonance2}
\limsup_{|k|\to \infty} |k|^{-1} \log(|\omega \cdot k|^{-1}) = 0\ ,
\eeq
then we can obtain a formal power series in $\varepsilon$ (whose coefficients are
well defined) solving the equation up to all orders.

$ii)$ If we assume that $f$ is a trigonometric polynomial,
then there is a formal power series in $\varepsilon$  which is a solution
of \eqref{hullPDE}, \equ{hullPDEC}, \equ{hullPDEBp} up to all orders,
without requiring any non-resonance bound on the frequency $\omega$.
\end{theorem}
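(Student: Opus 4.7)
The plan is to implement the Lindstedt scheme from Section~\ref{sec:highorder} rigorously in our functional setting, treating model A in detail; models A' and B' require only cosmetic changes. The construction is inductive. At order $\varepsilon^0$ the $\theta$-average of \equ{order0} reduces to \equ{c0}, whose solution $c_0$ is part of the hypotheses, and the non-average part of \equ{order0} defines $\widetilde{U}_1\equiv U_1-\langle U_1\rangle$. For $N\ge 1$, I think of equation \equ{orderN} as simultaneously determining two unknowns: the $\theta$-average $\langle U_N\rangle$ via the solvability condition \equ{compatibility}, and the non-average part $\widetilde{U}_{N+1}$ via Fourier division by $\omega\cdot\nabla_\theta$.

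For the first subproblem, averaging \equ{orderN} in $\theta$ annihilates both $(\omega\cdot\nabla_\theta)U_{N+1}$ and $(\omega\cdot\nabla_\theta)^2U_N$, and the equation reduces to
\[
\Lop\,\langle U_N\rangle=\langle S_N(c_0,U_1,\ldots,U_{N-1})\rangle,
\]
with $\Lop$ as in \equ{Ldefined}. Hypothesis {\bf H2} ensures that $\Lop$ is boundedly invertible on $H^m_\Lop$, so $\langle U_N\rangle$ is uniquely determined, and since $\Lop^{-1}$ acts only in the $x$ variable it preserves analyticity in $\theta$. For the second subproblem, I rewrite \equ{orderN} as $(\omega\cdot\nabla_\theta)U_{N+1}=R_N$ with $R_N\equiv S_N-(\omega\cdot\nabla_\theta)^2U_N-\Lop U_N$ and expand in the joint basis \equ{generalizedFourier}; the non-average coefficients of $U_{N+1}$ are then
\[
\widehat{\widetilde{U}}_{N+1,k,n}=\frac{\widehat{R}_{N,k,n}}{2\pi i\,\omega\cdot k},\qquad k\neq 0.
\]
Since $h$ is analytic in its first argument and $\A_{\rho,j,m}$ is a Banach algebra for the prescribed parameter range (Proposition~\ref{properties}), Taylor-expanding $h(c_0+U_\varepsilon,\cdot)$ to the required order produces $S_N$ as a polynomial in $c_0, U_1,\ldots,U_{N-1}$ whose coefficients inherit the regularity of the arguments.

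The main obstacle, and the only place where \equ{nonresonance} enters, is controlling the loss of analyticity strip under iterated Fourier division. Written in the basis \equ{generalizedFourier}, the norm \equ{analyticnorm} of $\A_{\rho',j,m}$ is (up to polynomial weights in $|k|$) a weighted $\ell^2$ sum with weight $e^{4\pi\rho'|k|}$, so multiplication of a Fourier coefficient by $|\omega\cdot k|^{-1}\le e^{2\pi\rho|k|/M}$ shrinks the strip of analyticity by at most $\rho/M$ per division. Choosing the schedule $\rho_N=\rho(1-N/(M+1))$, the recursion closes: each $U_N$ for $N=1,\ldots,M$ lies in $\A_{\rho_N,j,m}\subset\A_{\rho_M,j,m}$ with $\rho_M>0$, so $U_\varepsilon^{(M)}=c_0+\sum_{k=1}^M\varepsilon^k U_k$ is a well-defined element of $\A_{\rho_M,j,m}$ and, by construction, the residual is of order $\varepsilon^{M+1}$ in that norm, giving an approximate solution in the sense of Definition~\ref{def:approx}. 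Under the stronger hypothesis \equ{nonresonance2} the per-step loss can be made arbitrarily small, so the induction continues to all orders and produces a formal power series with each $U_N$ analytic in a positive strip.

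Part (ii) is immediate from the same scheme: when $f$ is a trigonometric polynomial, every $U_N$ has Fourier support in a finite set built from convolutions of $\operatorname{supp}\widehat{f}$, so only finitely many $\omega\cdot k$ ever appear as divisors and these are nonzero by rational independence of $\omega$; no non-resonance estimate is needed and the formal series exists to all orders. The modifications for models A' and B' are cosmetic: the linear part on the left-hand side has a different Fourier symbol (involving $\lambda_n^\Delta$ in model A' via {\bf H1'}--{\bf H2'}, or $\varepsilon^2(\omega\cdot k)^2+(\omega\cdot k)+\lambda_n^\Delta$ in model B'), but in each case the symbol is explicit, the compatibility condition at order zero is solvable by hypothesis, and the same Fourier-division-with-strip-loss argument goes through verbatim.
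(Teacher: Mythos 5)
Your construction is essentially the paper's own proof: the same splitting of each order into the $\theta$-average (solved by inverting $\Lop$, which {\bf H2} makes boundedly invertible on the relevant space) and the zero-average part (solved by Fourier division by $2\pi i\,\omega\cdot k$ in the basis \equ{generalizedFourier}), the same use of the Banach-algebra and composition properties to make sense of $S_N$, and the same observation for part (ii) that a trigonometric-polynomial forcing confines all divisors to a finite set, which is nonzero by rational independence. You are in fact more explicit than the paper about the strip-loss bookkeeping, which is the right instinct.

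One concrete slip: the schedule $\rho_N=\rho(1-N/(M+1))$ does not close the recursion you set up. Under \equ{nonresonance} the division by $\omega\cdot k$ can multiply the $k$-th coefficient by as much as $e^{2\pi\rho|k|/M}$, so passing from $\A_{\rho_N,j,m}$ to $\A_{\rho_{N+1},j,m}$ requires $\rho_N-\rho_{N+1}\ge\rho/M$, whereas your schedule surrenders only $\rho/(M+1)$ per step; the claimed containment $U_{N+1}\in\A_{\rho_{N+1},j,m}$ therefore does not follow from your own estimate. The honest budget is $\rho/M$ per division, and since $M$ divisions are needed to reach order $M$ the total loss is essentially all of $\rho$ --- which is exactly why the constant in \equ{nonresonance} is calibrated to $M$ (cf.\ Remark~\ref{nonresonanceoptimal}), and why the coefficients should simply be placed in $\A_{\rho',j,m}$ for suitable $\rho'<\rho$ rather than in strips of a preassigned width. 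The paper is equally informal on this point, so this is a bookkeeping defect rather than a wrong idea. A second, minor, point: for model B' the order-by-order operator is $\Lambda=(\omega\cdot\nabla_\theta)-\Delta_x$, whose symbol $2\pi i\,\omega\cdot k+\lambda_n^\Delta$ is bounded away from zero by {\bf H2'} alone, so no small divisors and no strip loss occur there at all; the $\varepsilon^2$-symbol you quote belongs to the fixed-point operator $\tilde\Lambda_\varepsilon$, not to the Lindstedt hierarchy.
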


\begin{remark}
Recall that we are assuming that $\omega \cdot k = 0$, $k\in\integer^d$, implies $k = 0$.
\end{remark}

\begin{remark}
\label{nonresonanceoptimal}
It seems likely that the condition \eqref{nonresonance}
(which is even weaker than the Bryuno condition) is
optimal. Each step of the computation of the perturbative expansion
involves solving a differential equation with $(\omega \cdot k)^{-1}$ as small
divisors. Hence, we expect that the solutions, in general, lose a domain of
definition of size $\rho/M$ at each step.
\end{remark}

Combining Theorems~\ref{thm:main} and \ref{thm:allorders} we obtain the following result.

\begin{theorem}\label{convergence}
Assume that $f$, $h$ satisfy the assumptions of Theorem~\ref{thm:main}
regarding the regularity, the boundary conditions and the existence of the zeroth order solution.
Fix $M\geq 2$ and assume \equ{nonresonance}. Then, we have the following results.

For model A there exists a solution of equation \equ{hullPDE}, which is analytic in $\varepsilon$ and $\theta$, and satisfies the D, N or P boundary conditions.
The analytic solution exists for $\varepsilon$ in the
domain $\Omega_B$ as in Theorem~\ref{thm:main} and $\theta$ in $\torus_\rho^d$.

For model A' assuming {\bf H1'}, {\bf H2'}, there exists an analytic solution of \equ{hullPDEC} in
$\theta\in\torus_\rho^d$ and
$\varepsilon$ with $\varepsilon$ in a domain of the form $\Omega_B$ as in Theorem~\ref{thm:main}.

For model B with D, N, P boundary conditions, provided the existence of an approximate solution,
there exists an analytic solution of \equ{hullPDEB} in $\theta\in\torus_\rho^d$ and
$\varepsilon$ in a domain of the form $\Omega_\delta$ as in \equ{omd} for some $\delta>0$.

For model B' with D, N, P boundary conditions, assuming {\bf H1'}-{\bf H2'}
there exists an analytic solution of \equ{hullPDEBp} in $\theta\in\torus_\rho^d$ and
$\varepsilon$ in a domain of the form $\Omega_\delta$ as in \equ{omd} for some $\delta>0$.
\end{theorem}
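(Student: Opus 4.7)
The plan is to simply combine the two preceding theorems, verifying that the hypotheses stated in Theorem~\ref{convergence} feed the inputs of both Theorem~\ref{thm:allorders} and Theorem~\ref{thm:main} for each of the four models.

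First, I would invoke Theorem~\ref{thm:allorders} under the standing regularity hypotheses on $f$ and $h$, the boundary conditions BCD/BCN, the existence of the zeroth order term $c_0$, and the non-resonance condition \eqref{nonresonance} with the prescribed $M\ge 2$. This produces an approximate solution $U_\varepsilon^{(M)} = \sum_{k=1}^{M}\varepsilon^k U_k$ in $\A_{\rho,j,m}$ in the sense of Definition~\ref{def:approx} for the appropriate functional equation (\eqref{hullPDE} for model A, \eqref{hullPDEC} for model A$'$, and \eqref{hullPDEBp} for model B$'$; for model B the existence of an approximate solution is assumed in the hypotheses, since the zeroth order problem in that case is handled separately). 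The Lindstedt recursion \eqref{orderN} together with the small-divisor estimate coming from \eqref{nonresonance} guarantees that every $U_k$ for $k\le M$ can be constructed in the analyticity strip $\torus^d_\rho$, losing at most $\rho/M$ per step, so that $U_\varepsilon^{(M)}$ lies in the same space $\A_{\rho,j,m}$ as $f$.

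Next I would feed this $U_\varepsilon^{(M)}$ into Theorem~\ref{thm:main}, which converts an approximate solution of order $\ge 2$ into an exact solution via the contraction mapping principle applied to the fixed point equation of the form \eqref{fixed point} (and its analogues for the other models). The spectral hypotheses \textbf{H1}--\textbf{H2} (and \textbf{H1}$'$--\textbf{H2}$'$ for models A$'$ and B$'$) assumed in Theorem~\ref{convergence} match exactly those required by Theorem~\ref{thm:main}; these provide the uniform lower bound on the eigenvalues of the linear operator $N_\varepsilon$ that ensures invertibility in the spaces $\A_{\rho,j,m}$ for $\varepsilon$ restricted to the appropriate complex domain. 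For models A and A$'$ this is the parabolic region $\Omega_B = \cup_\sigma\Omega_{\sigma,B}$ defined in \eqref{parabolicdomain}, which is tangent to the imaginary axis at the origin, while for models B and B$'$ it is the cone-like domain $\Omega_\delta$ of \eqref{omd} with aperture $\pi/2$. In either case, the theorem produces a fixed point $U_\varepsilon\in\A_{\rho,j,m}$.

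Finally, I would record two consequences of the contraction mapping construction that give the analyticity claim in $\varepsilon$. Since $\T_\varepsilon$ depends analytically on $\varepsilon$ in the respective complex domain, the successive iterates $\T_\varepsilon^n(U_\varepsilon^{(M)})$ are analytic functions of $\varepsilon$, and the contraction mapping estimate guarantees uniform convergence on compact subsets of that domain; hence the limit is analytic in $\varepsilon$ by a standard Weierstrass argument, as already noted in the discussion following \eqref{fixed point}. Analyticity in $\theta\in\torus^d_\rho$ and the required Sobolev regularity in $x$ are inherited from the fact that the fixed point lies in $\A_{\rho,j,m}$, while preservation of boundary conditions follows from the closedness of $\A_{\rho,j,m}$ under $\T_\varepsilon$, which was checked in Section~\ref{sec:res1} using BCD or BCN. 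The main (and only real) obstacle is purely bookkeeping: verifying that for each of the four models and for each of the three boundary conditions D, N, P, all the hypotheses listed in Theorems~\ref{thm:main} and~\ref{thm:allorders} are indeed implied by those of Theorem~\ref{convergence}; once this matching is made explicit, the conclusion of Theorem~\ref{convergence} is immediate from the two preceding theorems.
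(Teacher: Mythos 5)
Your proposal is correct and follows exactly the route the paper takes: the paper itself states that Theorem~\ref{convergence} is proved by feeding the order-$M$ approximate solution supplied by Theorem~\ref{thm:allorders} (for models A, A$'$, B$'$; assumed directly for model B) into the contraction argument of Theorem~\ref{thm:main}, with analyticity in $\varepsilon$ obtained from uniform convergence of the iterates on $\Omega_{\sigma,B}$ or $\Omega_\delta$. Your hypothesis-matching and the remark on the $\rho/M$ loss of analyticity strip per Lindstedt step are consistent with the paper's treatment.
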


\begin{remark}
Note that the non-resonance condition which we need to impose on $\omega$
to obtain the existence to all orders is more restrictive  than the
non-resonance condition we need to obtain the existence of
an analytic solution defined in the domain $\Omega_{\sigma,B}$ or $\Omega_\delta$.
Since, as we argued in Remark~\ref{nonresonanceoptimal}, we believe that
the conditions are optimal, it seems that given an $M_0$
and an $\omega$ that satisfy \eqref{nonresonance}
for $M = M_0$, but not for $M = M_0 +1$, then we can obtain functions
that have an expansion up to order $M_0$, but not $M_0 +1$.

It seems, therefore, possible to arrange the existence of
models with a solution analytic in a domain of the form $\Omega_{\sigma,B}$ or $\Omega_\delta$, but
without Taylor expansion beyond a certain order.
\end{remark}

\begin{remark}
By restricting the domain in Theorem \ref{convergence}, we can obtain stronger contraction properties
for the operator $\T$ defined in \equ{fixed point}.

For example, for model A one possibility would be to consider only one of the domains
$\Omega_{\sigma,B}$ defined in \eqref{parabolicdomain} with $\sigma$ small enough.
Another possibility is to consider conic domains defined as
$$
\Upsilon_{\delta, \sigma} = \{ \ep \in \complex \, : \, |\Im(\ep)|/|\ep| <  \delta\ ,\quad
\sigma < |\ep| < 2\sigma\}
$$
for some $\delta$, $\sigma>0$.
We refer to \cite{CallejaCL13} for further details on the study of the solution of a forced strongly dissipative
ODE on conic domains. By restricting to the real line, it seems possible to obtain results for finitely differentiable
non-linearities.
\end{remark}

\section{Existence of solutions of the zeroth order term}\label{sec:c0}

The existence of solutions of the zeroth order equation for models A, A'
has a very extensive literature and can be done by a variety
of methods; with reference to classical textbooks like \cite{struwe,amb-malc,precup},
we defer the presentation of some of such results in Appendix \ref{sec:c0A}.
For models A, A', we will produce several (even infinitely many)
solutions of the zeroth order equation.

In this section we confine ourselves to discussing models B, B', using arguments based on an implicit
function theorem in Banach spaces. However, we mention that there are
other possibilities which we have not covered, for example methods based on index
theory (\cite{FonsecaG95,Berger73}). We will show that each of these solutions of the zeroth
order equation continues into a formal solution to all orders and that, furthermore, it can be
modified to be a true solution.

\subsection{Solution of the zeroth order term for model B}\label{sec:c0B}

The zeroth order equation for model B is:
\begin{equation}\label{c0U0}
(\omega\cdot\nabla_\theta)U_0(\theta,x)-\Delta_x U_0(\theta,x)+h(U_0(\theta,x),x)=f(\theta,x)\ .
\end{equation}
We will solve equation \equ{c0U0} by reducing it to a fixed point problem
and providing conditions that ensure solvability.

Let us denote by $\Gamma$ the operator
\beq{Gamma}
\Gamma\equiv \omega\cdot\nabla_\theta+\Lop\ ,
\eeq
where $\Lop$ is given by
\beq{LmodelB}
\Lop \eta \equiv -\Delta_x\eta  +h'(0,x)\eta\ .
\eeq
We assume that
$$
h(0, x)=0\ .
$$
Then, \equ{c0U0} becomes
\beq{c0U02}
\Gamma U_0(\theta,x)+G(U_0(\theta,x),x)=f(\theta,x)
\eeq
with $G$ given by
\begin{equation}\label{GmodelB}
G(U_0(\theta,x),x) = h(U_0(\theta,x),x) - h'(0,x)U_0(\theta,x)\ .
\end{equation}
We notice that $\Gamma$ is a diagonal operator in the Fourier basis,
since it is separated in the sum of two parts, one of which acts only
on $\theta$ and the other acting only on $x$.
If we assume that $\Lop$ satisfies {\bf H1} and {\bf H2} and we
denote its eigenvalues by $\lambda_n$, then
$$
\Gamma(e^{2\pi ik\cdot\theta}\Phi_n)=(2\pi i \omega\cdot k+\lambda_n)e^{2\pi ik\cdot\theta}\Phi_n\ .
$$
Thus, we notice that $\Gamma$
is invertible and we can reduce \eqref{c0U02} to the following fixed point problem:
\beq{c0U03}
U_0(\theta,x)= -\Gamma ^{-1}\ G(U_0(\theta,x),x)+\Gamma ^{-1}\ f(\theta,x)\ .
\eeq
We define the operator $\Tau$ by
\beq{TaudefmodelB}
\Tau(U)\equiv -\Gamma ^{-1}\ G(U(\theta,x),x)+\Gamma ^{-1}\ f(\theta,x)\ .
\eeq
Using properties of composition of functions, see Proposition~\ref{prop:composition} in Appendix~\ref{app.spaces}, we can show that
for $U_0, V_0 \in \A_{\rho, j, m}$, then
$\Tau$ satisfies the inequality:
\beqano
\|\Tau(U_0)-\Tau(V_0)\|_{\rho,j,m}&=&\|\Gamma ^{-1}\ h(U_0,x)-\Gamma ^{-1}\ h(V_0,x)-\Gamma ^{-1}h'(0,x)U_0+\Gamma ^{-1}h'(0,x)V_0\|_{\rho,j,m}\nonumber\\
&\leq& C\alpha_0\ \|\Gamma ^{-1}\|_{\rho,j,m}\ \|U_0-V_0\|_{\rho,j,m}\ ,
\eeqano
where the Lipschitz constant of the composition with $G$
is bounded by a constant $C$ times $\alpha_0$. Finally, if we choose $\alpha_0$ small enough so that
$\Tau$ is a contraction, we obtain a solution $U_0$ in a ball of radius $\alpha_0$.
Thus, we have proven the following result.

\begin{proposition}\label{pro:zerob}
Assume that $f$ is in $\A_{\rho,j,m}$ for $\rho>0$, $j,m \in 2 \nat$, $j>d$, $m>\ell/2$
($m>\ell/2+1$ for Neumann boundary conditions).
Let $h:\mathcal{B} \times \overline{\D} \to \complex$ with $\mathcal{B}\subset\complex$ open set,
and let $\D$ be either of the form {\bf D1} or {\bf D2}. We assume that $h$
is analytic in $\mathcal{B}$ and $C^m(\D)\cap C(\overline{\D})$ in $x$ and let $h(0,x)=0$.

Consider model B with either D, N, P boundary conditions and assume that $h$
satisfies, respectively, BCD, BCN in case of D, N boundary conditions (depending on the boundary
condition considered for the equation).
Then, the zeroth order term of model B given by equation \eqref{c0U0}
admits a solution $U_0$ contained in a ball around the
origin in $\A_{\rho,j,m}$ of small enough radius $\alpha_0$ .
\end{proposition}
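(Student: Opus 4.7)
\emph{Proof proposal.} The plan is to implement a Banach contraction in a small ball $B_{\alpha_0}(0)\subset\A_{\rho,j,m}$ around the fixed-point formulation \eqref{c0U03}, using three ingredients: invertibility of $\Gamma$ with a sharp bound, a Lipschitz estimate on $G$, and an implicit smallness of $f$ that makes $\Tau$ a self-map of $B_{\alpha_0}$.

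First I would observe that $\Gamma=\omega\cdot\nabla_\theta+\Lop$ is diagonal in the generalized Fourier basis $\{e^{2\pi i k\cdot\theta}\Phi_n\}$ of \eqref{generalizedFourier}, acting as multiplication by $2\pi i\,\omega\cdot k+\lambda_n$. By hypothesis {\bf H2},
$$
|2\pi i\,\omega\cdot k+\lambda_n|\geq \lambda_n\geq \lambda_1>0\ ,
$$
so $\Gamma^{-1}$ is a Fourier-type multiplier with symbol bounded by $1/\lambda_1$. Since the norm \eqref{analyticnorm} and the operators $(\Delta_\theta+1)^{j/2}$, $\Lop^{m/2}$ all act diagonally in the same basis, this symbol bound passes to an operator bound $\|\Gamma^{-1}\|_{\A_{\rho,j,m}\to\A_{\rho,j,m}}\leq 1/\lambda_1$. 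Analyticity in $\theta$ and the imposed boundary conditions are preserved because the exponentials $e^{2\pi i k\cdot\theta}$ and the eigenfunctions $\Phi_n$ are unaffected.

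Next I would control the composition operator $G$. By \eqref{GmodelB}, $G(U,\cdot)=h(U,\cdot)-h'(0,\cdot)U$ vanishes to second order at $U=0$. Combining the composition statement (Proposition~\ref{prop:composition} in Appendix~\ref{app.spaces}) with the Banach algebra property valid for $j>d$, $m>\ell/2$, the map $U\mapsto G(U,\cdot)$ is $C^1$ from $\A_{\rho,j,m}$ into itself with $G(0)=0$ and $DG(0)=0$. Integrating the derivative along a segment then gives
$$
\|G(U)-G(V)\|_{\rho,j,m}\leq C\,\alpha_0\,\|U-V\|_{\rho,j,m}\qquad \text{for } U,V\in B_{\alpha_0}\ ,
$$
and $G(U)$ inherits the boundary conditions from $U$ by the verification performed in Section~\ref{sec:hconditions}. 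Assembling the pieces,
$$
\|\Tau(U)-\Tau(V)\|_{\rho,j,m}\leq \tfrac{C\alpha_0}{\lambda_1}\|U-V\|_{\rho,j,m}\ ,\qquad \|\Tau(U)\|_{\rho,j,m}\leq \tfrac{C\alpha_0^2}{\lambda_1}+\tfrac{\|f\|_{\rho,j,m}}{\lambda_1}\ .
$$
Choosing $\alpha_0<\lambda_1/(2C)$ makes $\Tau$ a strict contraction, and requiring $\|f\|_{\rho,j,m}\leq\alpha_0\lambda_1/2$ (the implicit smallness behind ``$\alpha_0$ small enough'') makes $\Tau$ a self-map of $B_{\alpha_0}$. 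Banach's fixed point theorem then produces a unique $U_0\in B_{\alpha_0}$ solving \eqref{c0U03}, equivalently \eqref{c0U0}.

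The main obstacle is the first step: justifying the symbol-to-operator passage for $\Gamma^{-1}$ on the analytic-function Hilbert space $\A_{\rho,j,m}$. This requires a Plancherel-type identity expressing \eqref{analyticnorm} in terms of the coefficients $\hat u_{k,n}$ from \eqref{generalizedFourier}, so that a bounded symbol translates into a bounded multiplier on $\A_{\rho,j,m}$. Provided the space is set up with such Fourier-type orthogonality (as the Hilbert structure in \eqref{analyticnorm} and Appendix~\ref{app.spaces} indicate), and provided the gain of two derivatives from $\Lop^{-1}$ is not needed (it is not---we only need a uniform lower bound on $|\Gamma|$), the argument is routine.
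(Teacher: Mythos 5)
Your proposal follows essentially the same route as the paper: the fixed-point reformulation \eqref{c0U03}, the diagonal action of $\Gamma$ on the basis $e^{2\pi i k\cdot\theta}\Phi_n$ with the lower bound $|2\pi i\,\omega\cdot k+\lambda_n|\ge\lambda_1$ coming from \textbf{H1}--\textbf{H2}, the Lipschitz estimate on $G$ via Proposition~\ref{prop:composition}, and a contraction in a small ball. You are in fact slightly more careful than the paper's own writeup in making explicit the self-map condition (the implicit smallness of $\|\Gamma^{-1}f\|$ relative to $\alpha_0$), which the paper subsumes under ``choose $\alpha_0$ small enough.''
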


\subsection{Solution of the zeroth order term for model B'}\label{sec:c0Bp}
The zeroth order equation for model B' is
\begin{equation}\label{U0Bp}
(\omega\cdot\nabla_\theta)U_0(\theta,x)-\Delta_x U_0(\theta,x)=f(\theta,x)\ ,
\end{equation}
which can  be solved under the assumptions {\bf H1'}-{\bf H2'}.
In fact, defining the operator $\Gamma$ acting on $U$ as in \equ{Gamma}, but with $\Delta_x$ instead of $\Lop$,
we can write \equ{U0Bp} as
$$
\Gamma U_0(\theta,x)=f(\theta,x)\ ,
$$
which can be solved, because $\Gamma$ is invertible (using {\bf H1'}-{\bf H2'}).

Indeed, the operator $\Gamma$ is diagonal in the Fourier basis. Let us expand $U_0$ as
$$
U_0(\theta,x)=\sum_{k\in\integer^d}\sum_{n\ge0}e^{2\pi ik\cdot\theta}\Phi_n(x)\tilde U_{0,k,n}
$$
for some coefficients $\tilde{U}_{0,k,n}$; in a similar way, let
$$
f(\theta,x)=\sum_{k\in\integer^d\backslash\{0\}}\sum_{n\ge0}e^{2\pi ik\cdot\theta}\Phi_n(x) f_{k,n}\ .
$$
Then, we obtain:
\beq{Uf}
\tilde{U}_{0,k,n}=\frac{f_{k,n}}{(2\pi i\omega\cdot k+\lambda^\Delta_n)}
\eeq
for $k\ne0$ (with $\lambda_n^\Delta\in\real$ denoting the eigenvalues of $-\Delta_x$) and $\tilde U_{0,0,n}=0$.
From \equ{Uf} we see that the assumptions {\bf H1'}-{\bf H2'} and the regularity of $f(\theta,x)$ imply
the regularity of $\tilde U_0$. Because of {\bf H1'}-{\bf H2'}, we have that $|2\pi i \omega\cdot k+\lambda_n^\Delta|\geq\nu$
for some $\nu>0$; using the character of the norms in Fourier coefficients, we obtain the desired result.

\section{Proof of Theorems \ref{thm:main},
\ref{thm:allorders} and \ref{convergence} for model A}\label{sec:proof}

In this section we present detailed arguments that complete the
proof of Theorems \ref{thm:main}, \ref{thm:allorders} and \ref{convergence} for the case of
the dissipative wave equation \equ{dissipativewave} of model A; in
Section~\ref{sec:others} we will present the necessary modifications in order to have
the results for models A', B, B'.

We start by proving the existence of an approximate
solution up to prescribed orders (Section~\ref{sec:prescribed}); then, we bound the
operator $N_\varepsilon$ in \equ{linearN} providing estimates in a parabolic domain
(Section~\ref{sec:bounds}) and we conclude by showing the existence of a solution of \equ{rearranged}
through a fixed point argument (Section~\ref{sec:fixed}).

\subsection{Existence of an approximate solution up to prescribed orders}\label{sec:prescribed}
\label{allorders}
In this section we describe the construction of the approximate
solution up to a prescribed order $M$, as in the statement of
Theorem~\ref{thm:allorders}.

For model A described by equation \equ{dissipativewave}, the first order term $c_0$ of the expansion
of the response solution (see \equ{response}) satisfies the  semilinear second order elliptic equation \equ{c0}.

To perform the formal manipulations that lead to the approximate solution
up to order $M$, we find it convenient to write the equation \equ{hullPDE} as
\beq{main-expansion}
[\varepsilon(\omega\cdot\nabla_\theta)^2+(\omega\cdot\nabla_\theta)-\varepsilon\Delta_x ]
U_\ep(\theta, x)
-\varepsilon\Delta_x c_0(x)+\varepsilon h(c_0(x)+U_\ep(\theta, x),x)=\varepsilon f(\theta,x)\ .
\eeq
We assume that a solution $c_0$ for \equ{c0} can be found as described in Appendix~\ref{sec:c0A}.
Next, we write formally $U_\ep\equiv U_\varepsilon(\theta)$ in powers of $\varepsilon$ (see \eqref{correction}).
We now show that we can define an approximate solution
of \eqref{main-expansion} as a finite truncation of \eqref{correction}
up to order $M$. Inserting \eqref{correction} into \eqref{main-expansion}, we get
\beq{epsilonexpansion}
\sum_{j=1}^\infty \varepsilon^j [\varepsilon (\omega\cdot\nabla_\theta)^2
+ (\omega \cdot \nabla_\theta) - \varepsilon \Delta_x ] U_j(\theta, x)
-\varepsilon \Delta_x c_0(x) + \varepsilon h(c_0(x) + U_\varepsilon(\theta,x),x)
-\varepsilon f(\theta,x)=0\ .
\eeq
Hence the first order in $\varepsilon$ in \equ{epsilonexpansion} is given by \eqref{order0}.
Since $c_0$ satisfies \equ{c0},
then the equation for the first order in $\varepsilon$ becomes
\beq{firstordereps}
(\omega \cdot \nabla_\theta)U_1(\theta, x) = f(\theta,x)
-\langle f\rangle (x)
\eeq
and it is easy to see that by the non-resonance condition
\equ{nonresonance},
equation \eqref{firstordereps}
has a solution in $\mathcal{A}_{\rho',j,m}$ for some $\rho'<\rho$: a proof of this fact can be found in \cite{CallejaCL13}
for the case of the varactor equation (see also \cite{CFG2}) and can be straightforwardly extended to the
present situation. In fact, let us define $g(\theta,x)\equiv f(\theta,x)-\langle f\rangle (x)$; let us expand
$U_1$ and $g$ as
$$
U_1(\theta,x)=\sum_{k\in\integer^d} \sum_{n\geq 0} e^{2\pi ik\cdot\theta} \Phi_n(x)\ \tilde U_{1,k,n}\ ,\qquad
g(\theta,x)=\sum_{k\in\integer^d\backslash\{0\}} \sum_{n\geq 0} e^{2\pi ik\cdot\theta} \Phi_n(x)\ \tilde g_{k,n}
$$
for suitable coefficients $\tilde U_{1,k,n}$, $\tilde g_{k,n}$. From \equ{firstordereps} we obtain that
$$
\tilde U_{1,k,n}={{\tilde g_{k,n}}\over {2\pi i\omega\cdot k}}\ ,
$$
which is well defined thanks to \equ{nonresonance}. The appearance of the small divisors is the origin of the
loss of analyticity domain.

Note that $U_1(\theta,x)$, as a solution of
\eqref{firstordereps}, has a free parameter, namely
 $\langle U_1\rangle (x)$, which is determined
at the subsequent order.

Recalling the definition \equ{Ldefined} of $\Lop$, the order $\varepsilon^2$ in \equ{epsilonexpansion} is
$$(\omega\cdot \nabla_\theta) U_2(\theta,x) =
- [(\omega \cdot \nabla_\theta)^2 + \Lop] U_1(\theta,x)\ , $$
which admits a solution $U_2(\theta,x)$, provided that the average of the right hand side is zero.
In particular, the average of $U_1$ must satisfy the equation $\Lop(\langle U_1\rangle)=0$ and since
$\Lop$ satisfies {\bf H1}-{\bf H2}, then $\langle U_1\rangle = 0$.

Now, by using the non-resonance condition \equ{nonresonance} up to order
$M$, one can proceed recursively to compute the functions $U_j$ in $\mathcal{A}_{\rho,j,m}$
up to order $M$.
We have that for any $N\leq M-1$ the function $U_{N+1}$ satisfies a
recursive relation of the form \eqref{orderN}.
Again, we must require that the average of the right hand side of \equ{orderN} is zero, which provides
the average of $U_{N}$, i.e.
\beq{un}
\Lop (\langle U_{N} \rangle)  = - \langle S_N(c_0,U_1...,U_{N-1}) \rangle\ .
\eeq
The existence of $\langle U_N \rangle$ satisfying \equ{un} is guaranteed by assumptions {\bf H1}-{\bf H2}, since
the spectrum of $\Lop$ is bounded away from zero.

In this way we obtain the approximate solution up to order $M$
under the assumption \equ{nonresonance}. It follows that we obtain a
well defined approximate solution
to all orders under the condition \equ{nonresonance2}; for instance one can adapt the argument given
in Appendix H of \cite{CG}. Finally, if we assume further that
$f$ is a trigonometric polynomial of degree $J>0$, then $U_N$ is
a trigonometric polynomial of degree $NJ$ and we can obtain the formal solution up to any order $N$.
This concludes the proof of Theorem~\ref{thm:allorders}.

\subsection{Bounds on the operator $N_\varepsilon$}\label{sec:bounds}
In this section we will obtain bounds on $N_\varepsilon$ in appropriate spaces, when $\varepsilon$
is contained in the domain $\Omega_B$ defined
as the union over $\sigma$ of the domains \eqref{parabolicdomain}.
We will also remark that it is not possible to obtain the same bounds when
$\varepsilon$ is on the imaginary axis: indeed, we present separate arguments
that lead us to conjecture that the bound on the spectrum of $N_\varepsilon$ cannot be obtained
when $\varepsilon$ is imaginary.

Note that, since $\Lop$ acts on the $x$ variable
only and $\omega\cdot\nabla_\theta$ on the $\theta$ variable, we can apply separation of variables
and obtain that the spectrum of $N_\varepsilon$ is
$$
\lambda_{n,k}\equiv\lambda_{n,k}(\varepsilon)=-(2\pi\omega\cdot k)^2+{{2\pi i}\over\varepsilon}
(\omega\cdot k)+\lambda_n\ .
$$
As already pointed out, the invertibility of $N_\varepsilon$ will follow from the fact
that its spectrum is bounded
away from zero. For a general operator, the bounds on the inverse would need not only
to estimate the spectrum, but also the spectral projections,
though this is trivial in this case since $\Lop$ is self-adjoint and
$\omega\cdot\nabla_\theta$ is anti self-adjoint, so that $\Lop + \omega\cdot\nabla_\theta$
is a normal operator.

We study the spectrum of $N_\varepsilon$, when $\varepsilon$ ranges in the domain
\begin{equation}\label{omegalpha}
\Omega_{\sigma,B,\alpha} \equiv \{\varepsilon=\xi+i\eta \in \complex\, : \, \xi>
B\, \eta^\alpha \, , \, \sigma < |\varepsilon| < 2 \sigma\}\ .
\end{equation}
Afterwards, we will fix $\alpha$ in such a way that we can use the fixed point argument of Section~\ref{sec:fixed}
and it will turn out that the best choice is $\alpha=2$, thus leading to defining the solution in the domain
$\Omega_{\sigma,B,2}=\Omega_{\sigma,B}$ as defined in \equ{parabolicdomain}.
To study the spectrum of $N_\varepsilon$, we will use the maximum principle for $\lambda_{n,k}$ as a function
of $\varepsilon$; hence, we will get a lower bound on $|\lambda_{n,k}|$ on the boundary of the domain.

Let us suppose that $\lambda_n \to \infty$ whenever $n\to \infty$.
We will show later that this assumption can be relaxed to encompass the case that the sequence of
the $\lambda_n$'s has a finite supremum, even if
this case does not appear in the applications we have in mind. Thus,
we can fix $K\in\nat$ large enough so that $\lambda_K-1\ge\lambda_1$; we will first provide a bound
for $1\leq n \leq K$ on the whole region $\Omega_{\sigma,B,\alpha}$.

For every $n\in \nat$, we want to
estimate $\inf_{k\in \integer^d\setminus\{0\}}|\lambda_{n,k}(\varepsilon)|^2$ for $\varepsilon\in\Omega_{\sigma,B,\alpha}$;
therefore, we study the behavior of
\beqano
|\varepsilon \lambda_{n,k}(\varepsilon)|^2&=&|-\varepsilon (2\pi\omega\cdot k)^2+i(2\pi\omega\cdot k)
+\varepsilon \lambda_n|^2\nonumber\\
&=&|-(\xi+i\eta)(2\pi\omega\cdot k)^2+i(2\pi\omega\cdot k)+(\xi+i\eta)\lambda_n|^2\nonumber\\
&=&\xi^2\ [-(2\pi\omega\cdot k)^2+\lambda_n]^2+[-\eta(2\pi\omega\cdot k)^2+(2\pi\omega\cdot k)
+\eta\lambda_n]^2\ .
\eeqano
For a given $n$, consider the function
\beq{gamma}
\Gamma_n(\tau,\xi,\eta) \equiv
\xi^2\ (-\tau^2+\lambda_n)^2+[\eta(-\tau^2+\lambda_n)+\tau]^2\ ,
\eeq
where $\tau \in \real$, $\xi+i \eta\in\Omega_{\sigma,B,\alpha}$. Clearly,
$$
\Gamma_n(2\pi\omega\cdot k,\xi,\eta)=|(\xi+i\eta)\lambda_{n,k}(\xi+i\eta)|^2\ ;
$$
since $\inf_{\tau\in\real} \Gamma_n(\tau,\xi,\eta)\leq \inf_{k\in\integer^d\backslash\{0\}} \Gamma_n(2\pi\omega\cdot k,\xi,\eta)$,
it suffices to bound from below $\inf_{\tau\in\real} \Gamma_n(\tau,\xi,\eta)$.

Let us start by considering the boundary $\{\xi=B\eta^\alpha\}$, namely we consider $\varepsilon$ as
\begin{equation}\label{parabola}
\varepsilon=B\eta^\alpha+i\eta
\end{equation}
with $\eta\in\real\backslash\{0\}$, $B>0$ large enough, say $B>B_0$ for some
$B_0\in\real_+$. Clearly, for every $n$ we have that
\[ \inf_{\varepsilon=B\eta^\alpha+i\eta}\ \inf_{k\in \integer^{d}\setminus\{0\}} |\varepsilon \lambda_{n,k}(\varepsilon)|^2 \geq
\inf_{\tau \in \real} |\Gamma_n(\tau,B\eta^\alpha,\eta)|\ .\]
We recall that {\bf H1} and {\bf H2} imply that $\inf_{n\geq 1}|\lambda_n|=\lambda_1>0$.

Obtaining lower bounds of \equ{gamma} for $\varepsilon$ of the form \eqref{parabola}
is very simple: indeed it is  the sum of two
non--negative terms, which vanish at very different places.
We analyze carefully each of the places where one of the terms vanishes. If none of the terms vanishes, the lower
bound is clear.

Let us define the three regions
\beqa{Ipm}
I_+&\equiv&[\sqrt{\lambda_n}-10^{-3}\sqrt{\lambda_n},\sqrt{\lambda_n}+10^{-3}\sqrt{\lambda_n}]\nonumber\\
I_-&\equiv&[-\sqrt{\lambda_n}-10^{-3}\sqrt{\lambda_n},-\sqrt{\lambda_n}+10^{-3}\sqrt{\lambda_n}]
\eeqa
and the complement of $I_+\cup I_-$. When $\tau$ is in such complement we have
\beq{L0}
\Gamma_n(\tau,\xi,\eta) \geq (B\eta^\alpha)^2(-\tau^2+\lambda_n)^2\geq
C_1^2(B\eta^\alpha)^2\lambda_n^2\geq C_1^2(B\eta^\alpha)^2\ \lambda_1^2
\eeq
for a suitable constant $C_1>0$. When $\tau\in I_+\cup I_-$ we have
\[
|\lambda_n-\tau^2|=|\sqrt{\lambda_n}-\tau|\ |\sqrt{\lambda_n}+\tau|\leq 10^{-3} \sqrt{\lambda_n}\ (2+10^{-3})\sqrt{\lambda_n}\ ,
\]
so that we obtain
\beqano
\Gamma_n(\tau,\xi,\eta) &\geq&[\eta(\lambda_n-\tau^2)+\tau]^2\nonumber\\
&\geq&\tau^2-2|\tau|\, |\eta|\, |\lambda_n-\tau^2|\nonumber\\
&\geq&\tau^2-2|\tau|\, |\eta|\, (2+10^{-3})\ 10^{-3}\,\lambda_n\ .
\eeqano
Since we have that
$|\tau|\leq (1+10^{-3}) \sqrt{\lambda_n}$ and $\tau^2\geq (1-10^{-3})^2 \lambda_n$,
we obtain for $\tau\in I_+\cup I_-$:
$$
\Gamma_n(\tau,\xi,\eta) \geq (1-10^{-3})^2\lambda_n-2(1+10^{-3})\ (2+10^{-3})\,10^{-3}\, \lambda_n^{3\over 2}\ |\eta|\ .
$$
Since we are considering $\lambda_1\leq \lambda_n\leq\lambda_K$, the following
inequality holds for all $\tau$:
\beq{L1}
\Gamma_n(\tau,\xi,\eta) \geq C_2^2 (B\eta^\alpha)^2
\eeq
for some constant $C_2>0$, provided $|\eta|$ is sufficiently small to satisfy the condition
$$
C_2^2 (B\eta^\alpha)^2+2(1+10^{-3})\ (2+10^{-3})\,10^{-3}\, \lambda_K^{3/2}\ |\eta|\leq (1-10^{-3})^2\, \lambda_1\ .
$$

We now consider the remaining parts of the boundary of the region $\Omega_{\sigma,B,\alpha}$, starting from the circle $|\varepsilon|^2=\sigma^2$ and
we begin from the case $|\lambda_n-\tau^2|<\delta$ for some positive $\delta$.
Then, for $\sigma$ sufficiently small and setting $\varepsilon=\xi+i \eta$, we have
\begin{equation}\label{L3}
\begin{aligned}
\Gamma_n(\tau,\xi,\eta)  &= \sigma^2(\tau^2-\lambda_n)^2+(1-2\eta \tau)(\tau^2-\lambda_n)+\lambda_n \\
&\geq \sigma^2(\tau^2-\lambda_n)^2+\frac{1}{2}(\tau^2-\lambda_n)+\lambda_n\geq \frac{\lambda_1}{2}\geq C_3 \xi^2
\end{aligned}
\end{equation}
for $C_3>0$, if $\sigma$ and $\delta$ are small enough with $0<\delta\leq \lambda_1$ and provided
$$
|\xi|\leq\sqrt{{{\lambda_1}\over {2C_3}}}\ .
$$

When $|\lambda_n-\tau^2|\geq\delta$, we have -as before- that the minimum
of $\Gamma_n(\tau,\xi,\eta)$ is reached for $|\lambda_n-\tau^2|=\delta$ and we
obtain the following bound:
\beq{L5}
\Gamma_n(\tau,\xi,\eta) = |\varepsilon|^2(\lambda_n-\tau^2)^2+\tau^2+2\tau \eta(\lambda_n-\tau^2)
=\xi^2\delta^2+(\eta(\lambda_n-\tau^2)+\tau)^2
\geq\xi^2\delta^2\ .
\eeq
Of course on the circle $|\varepsilon|^2=4\sigma^2$ we can reason
in the same way, possibly changing the constants.

\vskip.1in

Let us discuss now the case $n>K$; again we distinguish two cases.

For $\tau$ such that $|\lambda_n-\tau^2|<1$ we have $\tau^2> \lambda_K-1$; therefore for $|\eta|$
sufficiently small and for some $C_4>0$ we obtain the bound:
\beqa{L4}
\Gamma_n(\tau,\xi,\eta) &=&|\varepsilon|^2(\lambda_n-\tau^2)^2+\tau^2+2\tau \eta
(\lambda_n-\tau^2)\geq \tau^2-2|\tau||\eta|\geq {1\over 2}\tau^2 \nonumber\\
&\geq& {1\over 2}(\lambda_K-1)>\frac{\lambda_1}{2}
\geq C_4 \xi^2\ ,
\eeqa
provided $|\xi|$ is sufficiently small, namely
$$
|\xi|\leq\ \sqrt{{{\lambda_1}\over{2C_4}}}\ .
$$
Finally, for $|\lambda_n-\tau^2|\geq 1$, $n>K$, we have
\beq{L2}
\Gamma_n(\tau,\xi,\eta) \geq \xi^2|\lambda_n-\tau^2|^2 \geq \xi^2\ .
\eeq

Casting together the bounds \equ{L0}, \equ{L1}, \equ{L3}, \equ{L5}, \equ{L4}, \equ{L2}, there exists a constant $C_5>0$,
depending on $\lambda_1$, such that for every $n$,
\beq{L6}
\Gamma_n(\tau,\xi,\eta) \geq C_5 \ \max\{(B\eta^\alpha)^2,\xi^2\}=C_5 \xi^2\ ,
\eeq
since we are in the domain $\Omega_{\sigma,B,\alpha}\subseteq\{\xi+i\eta\in\complex:\ |\xi|\geq B|\eta|^\alpha\}$.
This concludes the bounds on the spectrum of $N_\varepsilon$.

\begin{remark}
$(i)$ The bounds providing the invertibility of $N_\varepsilon$ fail when $\varepsilon$ is on the imaginary axis.

$(ii)$ We do not assume that the spectrum is discrete; we could take the spectrum ranging
over any set of the real line.

$(iii)$ The discussion above does not depend explicitly on the boundary
conditions assumed for the PDE. However the boundary conditions enter through the
assumption {\bf H2}.

$(iv)$ The case in which
$$
\sup_{n\geq 1} \lambda_n =\Lambda<\infty
$$
is even simpler than the previous discussion, since in this case we can reason exactly as we did just
for the case $n\leq K$.
\end{remark}

To use the fixed point argument formulated in Section~\ref{sec:fixed}, we need the bound
$$
\left(\Gamma_n(\tau,\xi,\eta)\right)^{1\over 2}\ge C_6\sigma^2
$$
for some constant $C_6$. This bound will be used in \equ{NFP} below and,
in view of \equ{L6}, it amounts to requiring
$$
|\xi|\geq \tilde C_6\sigma^2
$$
for some constant $\tilde C_6>0$. This inequality is in turn implied by
$$
|B\eta^\alpha| \geq \tilde C_6\sigma^2\ ,
$$
which is possible only if
$$
|B\eta^\alpha|\geq \tilde C_6(B^2\eta^{2\alpha}+\eta^2)\ .
$$
Therefore, since $|\eta|<1$, we must have $\alpha\leq 2$; in conclusion, we take $\alpha=2$, being the best possible exponent,
thus leading to define the domain $\Omega_{\sigma,B}$ as in \equ{parabolicdomain}.

\subsection{Existence of the fixed point}\label{sec:fixed}
As we have discussed in Section \ref{sec:res1}, we can rewrite \eqref{rearranged} as a fixed point
equation, namely
\[
U(\theta, x)= N_\ep^{-1}(f(\theta,x)-\av{f}(x))-N_\ep^{-1}G(U)(\theta,x)\ ,
\]
where $U$ denotes a function of $\ep$ defined by $U_\ep=U_\ep(\theta,x)$.
In this way, we define an operator $\Tau$ acting on functions
analytic in $\ep$, taking values in $\A_{\rho, j, m}$, given by
\beq{operator}
\Tau(U)\equiv N_\ep^{-1}(f-\av{f})-N_\ep^{-1}G(U)\ .
\eeq
For a fixed $\ep$, we find a fixed point of $\Tau$ by considering a
domain $\mathcal{P} \subset \A_{\rho, j, m}$ with $\Tau(\mathcal{P})\subset \mathcal{P}$
on which $\Tau$ is a contraction.
Since we want to obtain analyticity in $\ep$, we reinterpret \equ{operator} as an operator
acting on a space of analytic functions in $\ep$ and we consider
the domain $\mathcal{\tilde P}$ in the space $\A_{\rho,j,m,\sigma,B}$
consisting of analytic functions of $\ep$ taking values in $\A_{\rho, j, m}$
with $\ep$ ranging on the domain $\Omega_{\sigma, B}$.

We endow $\A_{\rho, j, m,\sigma,B}$ with the supremum norm
\begin{equation}\label{supnorm}
\|U\|_{\rho,j,m,\sigma,B}\equiv\sup_{\ep\in\Omega_{\sigma, B}}\|U\|_{\rho, j, m}\ ,
\end{equation}
for which $\A_{\rho, j, m,\sigma,B}$ is a Banach Space. Moreover,
due to Proposition \ref{properties} of Appendix~\ref{app.spaces}, if $j > d$ and $m > \ell/2$, then
$\A_{\rho, j, m,\sigma,B}$ with the norm \eqref{supnorm}
is a Banach Algebra.

Notice that \eqref{L6} and the fact that $N_\varepsilon$ is diagonal implies that we can estimate
its norm in the domains $\Omega_{\sigma,B}$. We note that
the infimum is reached at the boundary of the domains, namely
\beq{NFP}
 \|N_\ep^{-1}\|_{\rho,j,m,\sigma,B} \leq C_7 B^{-1} \sigma^{-2}\sigma
\eeq
for some $C_7>0$, provided that $B$ is sufficiently large.
Using the Banach Algebra property
of $\A_{\rho, j, m,\sigma,B}$ and the fact that $D_U G(0)(\theta,x) =0$,
we note that the operator $\Tau_\ep$ is Lipschitz in a ball $\mathcal{B}_\alpha(0)
\subset \A_{\rho, j, m,\sigma,B}$ of
radius $\alpha>0$.
Indeed, using Proposition \ref{prop:composition} we have
that the Lipschitz constant of the composition with $G$
is bounded by a constant times $\alpha$. Thus, we have shown that
$$
\|\Tau(U)-\Tau(V)\|_{\rho, j, m, \sigma,B}\le C_7 B^{-1} \sigma^{-1}
\alpha \|U-V\|_{\rho, j, m,\sigma,B}\ .
$$
We continue as in \cite{CallejaCL13} by showing that $\Tau$ is
a contraction in a ball
centered around the approximate solution that gets mapped into
itself. First, we notice that the
approximate solution $U^M=U^M(\theta,x)$
(see Definition~\ref{def:approx}) satisfies
$$
\|U^M\|_{\rho,j,m,\sigma,B}\leq C_{8}\sigma
$$
for some $C_{8}>0$.

We fix $\alpha_0 > 0$  which will be the radius of a ball around
zero in $\A_ {\rho, j,m,\sigma,B}$, so that we will
take the constants corresponding to this
ball.  We will refer to this ball as the \sl ambient ball. \rm

Our next goal will be to identify balls around the approximate solutions
such that the operator $\Tau$ maps them into themselves and
is a contraction. The following discussion is very similar to what is done in
\cite{CallejaCL13}.

Consider a ball $\B_\beta(U^M)$ of radius $\beta$ around $U^M$.
We will impose several conditions on $\beta$ that ensure that the ball is mapped into itself by $\Tau$ and
that $\Tau$ is a contraction.

The ball $\B_\beta(U^M)$
 is contained in the ambient ball, $\mathcal{B}_{\alpha_0}(0)\in\A_ {\rho, j,m,\sigma,B}$, provided that
\beq{Cond2}
C_{8}\sigma+\beta\leq \alpha_0\ .
\eeq
Hence, we will assume \equ{Cond2} to ensure we can use the constants of the operator in the ambient ball.

The operator  $\Tau$ is a contraction on  $\B_\beta(U^M)$
provided that
\beq{Cond1}
C_7(C_{8}\sigma+\beta)B^{-1}\sigma^{-1}<1\ .
\eeq

Moreover, we have that the approximate solution satisfies the inequality
\[
\|\Tau(U^M)-U^M\|_{\rho,j,m,\sigma,B}\leq C_{9}\sigma^3 B^{-1}\sigma^{-1}
\]
for some constant $C_{9}>0$, since $U^M$ is a solution at least to $O(\varepsilon^3)$
as in Theorem~\ref{thm:main} or \ref{convergence}.
The ball $\B_\beta(U^M)$ is mapped into itself, whenever
\beq{Cond3}
C_7(C_{8}\sigma+\beta)B^{-1}\sigma^{-1}\beta+C_{9}B^{-1}\sigma^2\leq\beta\ .
\eeq
Notice that to fulfill \equ{Cond2}, \equ{Cond1}, \equ{Cond3}, we are allowed to choose $\beta$. Namely, we want to show that
for some $B$ large enough and for all $\sigma$ sufficiently small, say $\sigma\leq\sigma^\ast(B)$, we can find
$\beta>0$ such that the three conditions \equ{Cond2}, \equ{Cond1}, \equ{Cond3} are satisfied.

It is natural to choose
$$
\beta=100\ \sigma
$$
and, then, \equ{Cond2}, \equ{Cond1}, \equ{Cond3} are implied by
\beq{Cond1p}
(C_8+100)\sigma \leq \alpha_0
\eeq
\beq{Cond2p}
C_7(C_8+100)B^{-1} < 1
\eeq
\beq{Cond3p}
100 C_7 (C_8+100)B^{-1}+C_9 B^{-1} \sigma \leq 100\ .
\eeq
We see that we can choose $B$ large enough so that \equ{Cond2p} is satisfied and, then, \equ{Cond1p}, \equ{Cond3p}
are satisfied for $\sigma$ small enough.

In conclusion, we obtain that $\Tau$ admits a fixed point in the domain $\tilde\P$, provided $\sigma$ and $B$ are
suitably chosen. This fixed point will be a function analytic in $\Omega_{\sigma,B}$.

As a corollary, the solution is locally unique, namely we have the following result.

\begin{cor}\label{cor:uniqueness}
For a fixed $\varepsilon\in\Omega_{\sigma,B}$ with $\sigma$, $B$ such that \equ{Cond2}, \equ{Cond1}, \equ{Cond3} are satisfied,
let $U^M$ be an approximate solution. Then, for any $\theta\in\torus^d$, we have that
\beq{UL}
\lim_{n\to\infty} \Tau^n\ U^M(\theta)=U(\theta)\ .
\eeq
In particular, the convergence in \equ{UL} is uniform for $\varepsilon\in\Omega_{\sigma,B}$ with
$\Omega_{\sigma,B}$ as in \equ{parabolicdomain}, since $\|U^M-U\|\leq C\sigma^M$ for a
positive constant $C>0$, which implies that the solution is analytic for $\varepsilon\in\Omega_{\sigma,B}$.
\end{cor}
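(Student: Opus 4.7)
The plan is to invoke the Banach contraction mapping theorem in the already-constructed setting and then read off uniformity in $\varepsilon$ and the quantitative closeness $\|U^M - U\| \le C\sigma^M$ directly from the contraction inequalities. All of the heavy machinery --- the bound \equ{NFP} on $N_\varepsilon^{-1}$, the Lipschitz estimate on $\Tau$ coming from the Banach algebra property of $\A_{\rho,j,m,\sigma,B}$, and the verifications \equ{Cond1p}--\equ{Cond3p} that $\Tau$ both maps $\B_\beta(U^M)$ into itself and contracts there --- is established in Section~\ref{sec:fixed}. So the proof of the corollary should amount to harvesting its consequences cleanly.

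First I would apply Banach's theorem to $\Tau:\B_\beta(U^M)\to\B_\beta(U^M)$ with $\beta=100\sigma$: this produces a unique fixed point $U\in\B_\beta(U^M)$ and guarantees that the iteration $U_{n+1}=\Tau(U_n)$ starting from $U_0=U^M$ converges in the $\|\cdot\|_{\rho,j,m,\sigma,B}$ norm to $U$. Since $U^M\in\A_{\rho,j,m,\sigma,B}$ is analytic in $\varepsilon$ (it is a polynomial in $\varepsilon$) and $\Tau=N_\varepsilon^{-1}(f-\langle f\rangle) - N_\varepsilon^{-1}G(\cdot)$ depends analytically on $\varepsilon\in\Omega_{\sigma,B}$ (the map $\varepsilon\mapsto N_\varepsilon^{-1}$ is analytic on this domain by \equ{NFP} and the explicit diagonal form of $N_\varepsilon$), every iterate $\Tau^n U^M$ lies in $\A_{\rho,j,m,\sigma,B}$. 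Convergence in the supremum norm \equ{supnorm} is by definition uniform in $\varepsilon\in\Omega_{\sigma,B}$; hence the limit $U$ is a uniform limit of $\varepsilon$-analytic functions valued in the Banach space $\A_{\rho,j,m}$, and a standard Morera/Weierstrass argument yields that $U$ is analytic in $\varepsilon$ on $\Omega_{\sigma,B}$.

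For the quantitative estimate, I would upgrade the bound $\|\Tau(U^M)-U^M\|_{\rho,j,m,\sigma,B}\le C_9 B^{-1}\sigma^2$ used in \equ{Cond3}: since $U^M$ is approximate to order $M$ in the sense of Definition~\ref{def:approx}, the residual $\mathcal{F}_\varepsilon(U^M)$ is $O(\varepsilon^{M+1})$ in $\A_{\rho,j,m}$, and one factor of $\|N_\varepsilon^{-1}\|\le C_7 B^{-1}\sigma^{-1}\cdot\sigma$ from \equ{NFP} then gives
\[
\|\Tau(U^M)-U^M\|_{\rho,j,m,\sigma,B}\le \tilde C\,\sigma^{M+1}\cdot B^{-1}\le \tilde C\,B^{-1}\sigma^{M}.
\]
Combining with the contraction constant $\kappa=C_7(C_8+100)B^{-1}<1$ from \equ{Cond2p}, the geometric-series estimate of the Banach fixed-point theorem,
\[
\|U-U^M\|_{\rho,j,m,\sigma,B}\le \frac{\|\Tau(U^M)-U^M\|_{\rho,j,m,\sigma,B}}{1-\kappa},
\]
yields $\|U-U^M\|_{\rho,j,m,\sigma,B}\le C\sigma^M$ for a suitable $C=C(B)>0$. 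Taking the supremum over $\varepsilon$ is built into the norm, and evaluating at any $\theta\in\torus^d$ gives \equ{UL} together with the asymptotic statement.

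The only step that is not entirely routine is ensuring that the constant $\tilde C$ in the $\sigma^M$ estimate on $\|\Tau(U^M)-U^M\|$ is genuinely independent of $\varepsilon\in\Omega_{\sigma,B}$; this is where I expect to spend real care. It requires that the residual of the approximate solution is bounded in $\A_{\rho,j,m}$ uniformly in $\varepsilon\in\Omega_{\sigma,B}$ by $\tilde C\,|\varepsilon|^{M+1}$, which in turn uses that the polynomial coefficients $U_1,\dots,U_M$ produced in Section~\ref{sec:prescribed} lie in $\A_{\rho,j,m}$ with norms independent of $\varepsilon$ and that $h$ is analytic, so Taylor's theorem with a $\|\cdot\|_{\rho,j,m}$-remainder applies uniformly on $\B_{\alpha_0}(0)$. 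Once this uniform residual bound is in hand, the rest of the argument is the standard a posteriori closeness estimate for contractions, and analyticity of $U$ in $\varepsilon$ follows from the fact that uniform limits of analytic Banach-space-valued functions are analytic.
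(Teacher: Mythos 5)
Your proposal is correct and follows essentially the same route as the paper: the corollary is precisely the harvest of the contraction set-up of Section~\ref{sec:fixed}, namely Banach's fixed point theorem on $\B_{100\sigma}(U^M)\subset\A_{\rho,j,m,\sigma,B}$, the a posteriori estimate $\|U-U^M\|\le(1-\kappa)^{-1}\|\Tau(U^M)-U^M\|$ with the residual of the order-$M$ approximate solution controlled by $\|N_\ep^{-1}\|\,O(|\ep|^{M+1})=O(B^{-1}\sigma^{M})$, and analyticity of $U$ as a uniform limit of $\ep$-analytic Banach-space-valued functions. The only nit is a harmless misquote of \equ{NFP} (the bound there is $C_7B^{-1}\sigma^{-1}$, not $C_7B^{-1}\sigma^{-1}\cdot\sigma$), which does not affect your final estimate $\|U-U^M\|\le C\sigma^{M}$.
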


\section{Modifications of the proof in Section \ref{sec:proof} to models A', B, B'}\label{sec:others}

In this Section we consider models A', B, B', providing the necessary modifications to the proof developed for model A. In particular, we concentrate on the extension of
Theorem~\ref{thm:allorders} to construct an approximate solution and on the bound of the eigenvalues in a
suitable domain $\Omega_{\sigma,B}$, as it was done for model A in Section~\ref{sec:bounds}. The other parts of the proof can be extended
to models A', B, B', trivially.
The existence of the order zero solution is considered in Appendix \ref{sec:c0A}, and
Sections \ref{sec:c0B}, \ref{sec:c0Bp}, respectively.

\subsection{Model A'}\label{sec:othersC}
For model A' we look for a response solution of the form \eqref{response} and
define the operators $N_\varepsilon$, $\Lop$ and $G$ as
\beqa{modAp}
N_\varepsilon U(\theta,x)&\equiv& [(\omega\cdot\nabla_\theta)^2+{1\over\varepsilon}(\omega\cdot\nabla_\theta)\Delta_x+\Lop]U(\theta,x)\nonumber\\
\Lop U(\theta,x)&\equiv& -\Delta_x U(\theta,x)+h'(c_0(x),x)U(\theta,x)\nonumber\\
{G(U)}(\theta,x)&\equiv& h(c_0(x)+U(\theta,x),x)-h(c_0(x),x)-h'(c_0(x),x)U(\theta,x)\ .
\eeqa
The equation \equ{hullPDEC} is equivalent to the equation
$$
N_\varepsilon U_\varepsilon(\theta,x)+G(U_\varepsilon)(\theta,x)=f(\theta,x)-\langle f\rangle(x)\ ,
$$
while the function $c_0$ must satisfy \eqref{c0}

\vskip.1in

To construct an approximate solution, let us again write formally $U_\varepsilon$ as $U_\varepsilon=\sum_{j=1}^\infty \varepsilon^j U_j$.
Then, after solving equation \equ{c0} for $c_0$ (see Appendix~\ref{sec:c0A}), at the first order in $\varepsilon$ we need to solve
$$
(\omega\cdot\nabla_\theta)\Delta_x U_1(\theta,x)=f(\theta,x)-\langle f\rangle (x)\ ,
$$
which yields the non-average part of $U_1$ using {\bf H2'} and the non-resonance condition on $\omega$.
At the second order in $\varepsilon$ we obtain the equation
$$
(\omega\cdot\nabla_\theta)\Delta_x U_2(\theta,x)=-[(\omega\cdot\nabla_\theta)^2 + \Lop]U_1(\theta,x)\ ,
$$
from which we first deduce that the average $\langle U_1\rangle$ should be zero by
imposing that the right hand side has zero average and noting that
$\Lop$ is invertible. Then, we determine
the non-average part of $U_2$ by solving the remaining equation.

At the order $N\geq 3$ we obtain the equation
\beq{orderN-Ap}
(\omega\cdot\nabla_\theta)\Delta_x U_N(\theta,x) = -[(\omega\cdot\nabla_\theta)^2 + \Lop]U_{N-1}(\theta,x)
 + S_N(c_0(x),U_1(\theta,x),...,U_{N-2}(\theta,x))
\eeq
for a suitable function $S_N$, depending on $c_0$ and on the functions $U_j$ with $j<N-1$;
by imposing that the average of the right hand side of \eqref{orderN-Ap}
is zero we get
$$
\Lop (\langle U_{N-1} \rangle)  = \langle S_N(c_0,U_1...,U_{N-2}) \rangle\ .
$$
After fixing the average of $U_{N-1}$, we obtain the non-average part of $U_N$ by solving
equation \eqref{orderN-Ap}.

\vskip.1in

To conclude the proof for model A', we proceed to estimate the eigenvalues of the
operator $N_\varepsilon$ in a way similar to that of model A.

Indeed, let us write $\varepsilon=\xi+i\eta$ with $\varepsilon$ belonging to the domain
$\Omega_{\sigma,B}$ defined in \equ{parabolicdomain}; denoting by $\lambda_n^\Delta$ the eigenvalues
associated to $-\Delta_x$, we have:
\beqano
|\varepsilon\lambda_{n,k}|^2&=&|-\varepsilon (2 \pi \omega \cdot k)^2+i(2 \pi \omega \cdot k) \lambda_n^\Delta+\varepsilon\lambda_n|^2\nonumber\\
&=&\xi^2((2 \pi \omega \cdot k)^2-\lambda_n)^2+[-\eta (2 \pi \omega \cdot k)^2+(2 \pi \omega \cdot k)\lambda_n^\Delta+\eta\lambda_n]^2\ .
\eeqano
As for model A, we introduce an auxiliary function $\Gamma_n(\tau,\xi,\eta)$ to obtain bounds on the eigenvalues
of $N_\ep$:
\beq{gammaC}
\Gamma_n(\tau,\xi,\eta) \equiv \xi^2(\tau^2-\lambda_n)^2+[\eta(-\tau^2+\lambda_n)+\tau\lambda_n^\Delta]^2\ .
\eeq
Again we fix $K\in\integer$ such that $\lambda_K-1\geq\lambda_1$ and consider first
the case $n\leq K$.
Define the regions $I_-$ and $I_+$ as in \equ{Ipm}. In the region $(I_-\cup I_+)^c$ we obtain
$$
\Gamma_n(\tau,\xi,\eta)\geq \xi^2(\tau^2-\lambda_n)^2\geq C_{10} \xi^2\lambda_n^2\geq C_{10} \xi^2\lambda_1^2
$$
for a suitable constant $C_{10}>0.$

Within the region $I_-\cup I_+$ we  have
\beqano
\Gamma_n(\tau,\xi,\eta)&\geq&(-\eta\tau^2+\tau\lambda_n^\Delta+\eta\lambda_n)^2\geq \tau^2(\lambda_n^\Delta)^2-2|\eta|\ |\tau|\ |\lambda_n-\tau^2|\lambda_n^\Delta\nonumber\\
&\geq&\tau^2(\lambda_n^\Delta)^2-2\lambda_n^\Delta|\eta|\ |\tau|(2+10^{-3})\,10^{-3}\,\lambda_n\nonumber\\
&\geq&(1-10^{-3})^2\lambda_n(\lambda_n^\Delta)^2-2\lambda_n^\Delta(1+10^{-3})\lambda_n^{3/2}(2+10^{-3})\,10^{-3}\,|\eta|\geq C_{11}\xi^2\ ,
\eeqano
for some constant $C_{11}>0$, if
\beq{estC}
C_{11}\xi^2+2\lambda_K^\Delta(1+10^{-3})(2+10^{-3})\,10^{-3}\,\lambda_K^{3/2}|\eta|\leq(1-10^{-3})^2
(\lambda^\Delta_1)^2\lambda_1\ .
\eeq

In the case $n>K$, we recall the expression of $\Gamma_n(\tau,\xi,\eta)$ in \equ{gammaC}.
Then, we consider the subcase $|\lambda_n-\tau^2|<1$, which provides $\tau^2> \lambda_K-1$, so that one obtains
for $|\eta|$ sufficiently small:
\beqano
\Gamma_n(\tau,\xi,\eta)&=&|\varepsilon|^2(\lambda_n-\tau^2)^2+(\lambda_n^\Delta)^2\tau^2+2\tau\ \lambda_n^\Delta\ \eta
\ (\lambda_n-\tau^2)\nonumber\\
&\geq&(\lambda_n^\Delta)^2\tau^2-2\lambda_n^\Delta|\eta|\ |\tau|\nonumber\\
&\geq&{1\over 2}(\lambda_n^\Delta)^2\tau^2\geq {1\over 2}(\lambda_1^\Delta)^2\
(\lambda_K-1)>(\lambda_1^\Delta)^2\frac{\lambda_1}{2}\ ,
\eeqano
provided
$$
|\eta|\leq{1\over 4}\lambda_1^\Delta\sqrt{\lambda_K-1}\ .
$$

In the case $|\lambda_n-\tau^2|\geq 1$, recalling \equ{gammaC} we obtain
$$
\Gamma_n(\tau,\xi,\eta)\geq \xi^2 |\lambda_n-\tau^2|^2 \geq \xi^2\ .
$$

Concerning the boundaries $|\varepsilon|=\sigma$ and $|\varepsilon|=2\sigma$, again we can
reason as  for model A.\\

Note that the assumption {\bf H2'} is crucial in order to get the bound; if $\Delta_x$ is the standard Laplace-Beltrami
operator, the invertibility
of the operator $N_\varepsilon$ is guaranteed only for D boundary conditions, because {\bf H2'}
is violated for N, P boundary conditions.

This concludes the discussion of the invertibility of the operator $N_\varepsilon$ for model A'.
The existence of a fixed point can be done in full analogy to model A; see Section~\ref{sec:fixed}.

\subsection{Model B}\label{sec:othersB}

To construct an approximate solution, let us write formally
$U_\ep(\theta,x)=U_0(\theta,x)+\sum_{j=1}^\infty \varepsilon^j U_j(\theta,x)$.
Given the zeroth order solution (see \equ{c0U0}) as in Section~\ref{sec:c0B}, we proceed to determine the higher order terms, matching
powers of the formal series expansion in the equation
\beq{hullB}
\varepsilon^2(\omega\cdot\nabla_\theta)^2 U_\ep(\theta,x)+(\omega\cdot\nabla_\theta) U_\ep(\theta,x)-\Delta_x U_\ep(\theta,x)+
h(U_\ep(\theta,x),x)=f(\theta,x)\ .
\eeq
At first order in $\varepsilon$, we get the equation
\beq{U1B}
(\omega\cdot\nabla_\theta) U_1(\theta,x)-\Delta_x U_1(\theta,x)+
h'(U_0(\theta,x),x) U_1(\theta,x)=0\ ,
\eeq
which can be used to determine $U_1$. Indeed, writing \equ{U1B} as
$$
[(\omega\cdot\nabla_\theta)-\Delta_x+h'(U_0(\theta,x),x)]\ U_1(\theta,x)=0\ ,
$$
we may fix $U_1\equiv 0$.
\begin{remark}\label{rmk:neumann}
Note that if $U_0$ is contained in a ball around the origin in $\A_{\rho,j,m}$ with small enough radius as in
Proposition~\ref{pro:zerob}, then the operator
\beq{lamtilde}
\tilde \Gamma =(\omega\cdot\nabla_\theta)-\Delta_x+h'(U_0(\theta,x),x)
\eeq
is invertible. Indeed, if we write it as the sum of the invertible operator \eqref{Gamma},
introduced in Section \ref{sec:c0B} plus the multiplication operator $T$ defined as
$$T\phi = [h'(U_0(\theta,x),x) - h'(0,x)]\phi\ ,$$
which is small when $U_0$ is in a small ball, we obtain the inverse
of $\tilde \Gamma = \Gamma + T$
by a Neumann series argument.
\end{remark}

At the generic order $N\ge2$, we obtain the equation:
\beq{UNB}
\tilde \Gamma \ (U_N(\theta,x))=-(\omega\cdot\nabla_\theta)^2 U_{N-2}(\theta,x)
+S_N(U_0(\theta,x),...,U_{N-1}(\theta,x))\ ,
\eeq
where $S_N$ is a known function of the $U_j$'s with $j<N$. If the operator $\tilde\Gamma$
is invertible, we can determine $U_N$ uniquely.

Now we assume that it is possible to solve the zeroth order equation \equ{c0U0}
(some sufficient conditions have been presented is Section~\ref{sec:c0B})
as well as to solve the recursive equations
\equ{U1B}, \equ{UNB} by taking $U_0$ in a small ball
around the origin in the $\A_{\rho,j,m}$ norm.
We proceed to study the conditions under which \equ{hullB} can be solved.

We start by introducing the  operator
\beq{Lambaeps}
\Lambda_\varepsilon=\varepsilon^2(\omega\cdot\nabla_\theta)^2+(\omega\cdot\nabla_\theta)-\Delta_x
+h'(0,x)\ .
\eeq
If this operator is invertible, by the same argument as in Remark \ref{rmk:neumann}
the operator
\beq{Lambaeps}
\tilde \Lambda_\varepsilon=\varepsilon^2(\omega\cdot\nabla_\theta)^2+(\omega\cdot\nabla_\theta)-\Delta_x
+h'(U_0(\theta,x),x)
\eeq
is invertible whenever $\|U_0\|_{\rho,j,m}$ is sufficiently small.
Now, if we write equation \eqref{hullB} as
\beq{hullBlam}
\tilde \Lambda_\varepsilon U_\ep(\theta,x) + H(U_\ep)(\theta,x) = f(\theta,x)
\eeq
where we write $U_\ep = U_0 + \tilde U_\varepsilon$ with $\tilde U_\varepsilon=\sum_{j=1}^\infty \varepsilon^j U_j(\theta,x)$ and
$$
H(U_\ep)(\theta,x)= h(U_\ep(\theta,x),x) - h'(U_0(\theta, x),x) U_\varepsilon(\theta,x)\ ,
$$
we are led to solve the equation
$$
U_\ep(\theta,x)=-\tilde \Lambda_\varepsilon^{-1}[H(U_\ep)(\theta,x) - f(\theta, x)]\ .
$$
Let us define the operator $\Tau$ acting on a function $U=U(\theta,x)$ by
\beq{TaumodelB}
\Tau [U](\theta,x)\equiv -\tilde \Lambda_\varepsilon^{-1}[H(U)(\theta,x) - f(\theta, x)]\ .
\eeq
Using Proposition~\ref{prop:composition} of Appendix~\ref{app.spaces}, we can show that
for $U, V \in \A_{\rho, j, m}$, then $\Tau$ satisfies the inequality:
$$
\|\Tau(U)-\Tau(V)\|_{\rho,j,m}=\|\tilde \Lambda_\varepsilon ^{-1}\ (H(U))
-\tilde \Lambda_\varepsilon ^{-1}\ (H(V))\|_{\rho,j,m}
\leq C\alpha_0\ \|\tilde \Lambda_\varepsilon ^{-1}\|_{\rho,j,m}\ \|U - V\|_{\rho,j,m}\ ,
$$
since the Lipschitz constant of the composition with $H$ is bounded by a constant times $\alpha_0$.

As in the case of model A, to check that $\Tau$ maps a small enough ball around an approximate
solution $U^M(\theta,x)$ into itself and it is a contraction, we need to investigate
 the domain on which $\Lambda_\varepsilon$ can be inverted with ``good bounds''.

The multiplier $\lambda_{n,k,\ep}$ associated to $\tilde\Lambda_\varepsilon$ is given by
$$
\lambda_{n,k,\ep}\equiv \varepsilon^2 (2\pi i\omega\cdot k)^2+2\pi i\omega\cdot k+\lambda_n\ ,
$$
where the eigenvalues $\lambda_n$ of $\Lop\equiv-\Delta_x+h'(U_0(\theta,x),x)$ satisfy
{\bf H1}-{\bf H2} with the assumption that $\|U_0\|_{\rho,j,m}$ is small.
For a given $n$, we consider the function
$$
\tilde\Gamma_n(\tau,\ep)\equiv -\varepsilon^2\tau^2+i \tau+\lambda_n
$$
for $\tau\in\real$. We are interested to evaluate the quantity $\inf |\tilde\Gamma_n(\tau,\ep)|$.
This function can be easily analyzed geometrically, since the part corresponding to
$-\varepsilon^2\tau^2+i \tau$ is a parabola. The infimum is generated by considering the minimum
distance of the parabola from the quantity $\lambda_n$ (which is a real number).

For $\varepsilon=0$ the parabola coincides with the vertical axis, so that if $\lambda_n\not=0$,
the distance is always positive.
Indeed, the parabola $-\varepsilon^2\tau^2+i \tau$ passes through the origin and it is tangent there
to $i\real$. The axis of the parabola coincides with $-\varepsilon^2$.

We assume that
\beq{R}
{\rm Re}(-\varepsilon^2)\geq \delta> 0\ ,
\eeq
setting $\ep=B\eta^2+i\eta$, then \equ{R}
amounts to requiring that $\eta^2-B^2\eta^4\geq\delta> 0$, which is satisfied for $\eta$
sufficiently small. Finally, we obtain the estimate:
$$
|{\rm Re}(-\varepsilon^2\tau^2+i\tau+\lambda_n)|\geq \delta\tau^2+\lambda_n\geq\lambda_n\ ,
$$
which ensures that the spectrum of $\tilde\Lambda_\ep$ is away from zero due to {\bf H1}-{\bf H2}.
Therefore, we infer that the operator $\Gamma_n(\tau,\ep)$ is invertible and
we get uniform bounds within the domain
$$
\Omega_\delta\equiv\{\ep=\xi+i\eta:\ {\rm Re}(-\ep^2)\geq\delta\}
$$
for $\delta>0$.

Let us conclude by considering the case of $\varepsilon$ real
which is not covered by \equ{R}, say $\varepsilon=\xi$ with $\xi\in\real$ as in \equ{omd}
with $\delta$ small enough. Then, setting
$$
\Gamma_n(\tau,\xi)\equiv|-\xi^2\tau^2+i\tau+\lambda_n|^2=(\lambda_n-\xi^2\tau^2)^2+\tau^2\ ,
$$
it follows that
$$
\frac{d}{d\tau}\Gamma_n(\tau,\xi)=2\tau (2\xi^4\tau^2-2\lambda_n \xi^2+1)\ .
$$

We have two cases:

\noindent
{\bf case 1.} $2\lambda_n \xi^2\leq 1$, so that the minimum is attained at $\tau=0$ and one has
$\Gamma_n(0,\xi)=\lambda_n^2\geq \lambda_1^2$;

\noindent
{\bf case 2.} $2\lambda_n\xi^2> 1$ and hence the minimum is attained at $\tau_{\pm}=\pm\sqrt{(2\lambda_n\xi^2-1)/(2\xi^4)}$
($\Gamma_n(\tau_\pm,\xi)$ are equal for parity reasons) and one has
$$
\Gamma_n(\tau_\pm,\xi) =\frac{1}{4\xi^4}+\frac{\lambda_n}{\xi^2}-\frac{1}{2\xi^4}=\frac{\lambda_n}{\xi^2}-\frac{1}{4\xi^4}
\geq \frac{1}{4\xi^4}\geq1\,,
$$
for $\xi$ small enough.

Summarizing, for $\ep$ real  we get
\beq{es2}
\Gamma_n(\tau,\xi) \geq \min\{\lambda_1^2,1\}\ .
\eeq

\subsection{Model B'}\label{sec:othersBprime}

We write the solution as $U_\ep(\theta,x)=U_0(\theta,x)+\tilde U_\ep(\theta,x)$, where
$\tilde U_\ep\equiv \sum_{j=1}^\infty \ep^j U_j(\theta,x)$.
The zeroth order solution has been already discussed in Section~\ref{sec:c0Bp}. With respect to model B,
the only modification is that here we do not need the assumption $h(0)=0$. To determine the higher order
terms $U_j$, we start by considering the equation
\beq{Upeq}
\varepsilon^2(\omega\cdot\nabla_\theta)^2U_\ep(\theta,x)+(\omega\cdot\nabla_\theta) U_\ep(\theta,x)
-\Delta_x U_\ep(\theta,x)+\varepsilon h(U_\ep(\theta,x),x)=f(\theta,x)\ .
\eeq
Inserting the series expansion for $\tilde U_\ep$ into \equ{Upeq} and matching the same powers of $\varepsilon$, we get the equations for
the functions $U_j$, $j\geq 1$.

At the first order in $\varepsilon$ we obtain the equation:
\beq{Up1}
(\omega\cdot\nabla_\theta) U_1(\theta,x)-\Delta_x U_1(\theta,x)=-h(U_0(\theta,x),x)\ .
\eeq
Let $\Lambda\equiv(\omega\cdot\nabla_\theta)-\Delta_x$; then \equ{Up1} can be rewritten as
\beq{Up1new}
\Lambda U_1(\theta,x)=-h(U_0(\theta,x),x)\ ,
\eeq
and note that the right hand side is a known function, once we solved the zeroth order equation.

At the order $N\geq 2$ we get a recursive equation of the form
\beq{UpN}
\Lambda U_N(\theta,x)=S_N(U_0(\theta,x),U_1(\theta,x),...,U_{N-1}(\theta,x))\ ,
\eeq
for a function $S_N$ depending on the terms $U_j$, $0\leq j<N$, which are assumed to be determined
at the previous steps.

Both equations \equ{Up1new} and \equ{UpN} can be solved, provided the operator $\Lambda$ is boundedly invertible
in the spaces $\A_{\rho,j,m}$.
This requirement is satisfied under the conditions {\bf H1'}-{\bf H2'} on the eigenvalues
of $-\Delta_x$ appearing in $\Lambda$.

After solving the zeroth order equation as well as \equ{UpN}
up to a finite order $N$, we
consider the formulation of \equ{Upeq} as a fixed point
equation and establish the existence of solutions.

Let us define the operator $\Lambda_\varepsilon$ as
\beq{Lambaepsp}
\tilde \Lambda_\varepsilon=\varepsilon^2(\omega\cdot\nabla_\theta)^2+(\omega\cdot\nabla_\theta)-\Delta_x\ ;
\eeq
then, equation \equ{Upeq} can be written as
$$
\tilde \Lambda_\varepsilon U_\ep(\theta,x)+H_\ep(U_\ep)(\theta,x)=f(\theta,x)\ ,
$$
where $H_\ep$ is defined as
$$
H_\varepsilon(U_\ep)(\theta,x)=\varepsilon h(U_\ep(\theta,x),x)\ .
$$
We need to solve the fixed point equation
\beq{FPBp}
U_\ep(\theta,x)=-\tilde \Lambda_\varepsilon^{-1} [H_\varepsilon(U_\ep)(\theta,x)-f(\theta,x)]\ .
\eeq
The invertibility of the operator $\tilde \Lambda_\varepsilon$ in the space $\A_{\rho,j,m}$
has been already discussed for model B and we
conclude that condition \equ{R}
together with {\bf H1'}-{\bf H2'} ensure that the
spectrum of $\tilde \Lambda_\varepsilon$ is bounded away from zero,
if $U_0$ is in a sufficiently small ball around the origin, as required in Proposition~\ref{pro:zerob}.
Comparing \equ{TaumodelB} for model B and \equ{FPBp}, we conclude that we can reason as for model B
to apply the contraction mapping argument.

\section{Optimality of the results}\label{sec:optimal}
The domains of analyticity for response solutions established in Theorem~\ref{thm:main} are not optimal.
Clearly, many details of the argument can be optimized
and it is quite possible that one can use better fixed point theorems or better arguments.

Nevertheless, we want to argue in this section that the results presented cannot be
improved very dramatically and are qualitatively optimal.

We will present rigorous results (Theorem~\ref{thm:optimal})
and heuristic arguments  (Conjectures \ref{conj:optimal1}, \ref{conj:optimal2}, \ref{conj:optimal3})
that indicate that the results obtained are
qualitatively optimal and quantitatively almost optimal.

In particular, we believe that the domains of analyticity of the response
solutions for models B, B' do not contain sectors with aperture bigger than $\pi/2$ for generic perturbations.
Note that $\pi/2$ is precisely the
critical aperture of the Phragm\'en-Lindelof theorem (\cite{PhragmenL08,SaksZ65}), which makes the function theoretic properties
of the perturbative functions very tantalizing. Of course, this has deep consequences for the properties of
the asymptotic expansions and how to recover the function from the computed asymptotic expansion
(note that not even the uniqueness of the asymptotic expansion is clear for functions in these domains).

The argument presented in this section is very general and it applies
to many problems that can be reduced to a fixed point problem with parameters
and which satisfy some mild conditions on analyticity and compactness.
In particular, it applies to the treatment of the varactor problem
carried out  in \cite{CallejaCL13}, but we will not formulate here the precise
results in this case, even if they have less technicalities than those used in this work.

The argument we present here
goes by contradiction.  We show rigorously (see Theorem~\ref{thm:optimal}) that, if  there is a family of solutions $u_\ep$ whose domain
includes a \emph{resonance} (see Definition~\ref{def:resonance}), if we embed the problem in a two parameter family of
problems, then for most families we do not have a two parameter family of solutions. The second rigorous result (Lemma~\ref{twovar})
strengthens a bit the previous one by showing that if we had solutions for all the one-parameter families in a neighborhood
(in the space of analytic families), we could find
an analytic two-parameter family. The conclusion of the two results above is that it will be very unlikely to find a family of problems,
so that the domain of the response function includes  a resonance.
That is, if we  find a family whose solutions include a resonance,  we can find arbitrarily small perturbations whose  response solutions have
a domain that does not include the resonance.

By examining the argument carefully, and by proposing alternative points of view,  we speculate -- but we do not prove it rigorously --
that this argument applies to all resonances simultaneously. This leads to Conjecture~\ref{conj:optimal1}.

Of course, since our contradictions are obtained by constructing perturbations which cannot be continued, if we consider a
restricted class of models, one has to wonder whether the perturbations can be constructed in this class.

Similar lines of argumentation have appeared in the literature. Notably, we have been inspired by the use of
uniform integrability in \cite{Poincare92} to obtain insights on the problem of integrability.

\subsection{Statement of rigorous results on optimality}
The key to the arguments in this section  is the concept of resonance for a parameter family of solutions.

\begin{definition} \label{def:resonance}
Let $\O_\ep$ be an analytic family of bounded  operators from
a Banach space to itself.
We say that $\ep_0$ is a resonant value for the family $\O_\ep$, whenever
the operator $\O_{\ep_0} $ has a zero eigenvalue.

We say that the resonance is isolated if for all $0 < | \ep - \ep_0| \ll 1$,
we have that  $\O_\ep$ is invertible. Note that the  arguments  presented here
do not require that the resonance is
isolated.
\end{definition}

In our applications to non-linear problems, say $\F_\ep(U_\ep) = 0$, we will take
as the linear operators $\O_\ep$, the derivatives at the solution
$\O_\ep = D\F_\ep( U_\ep)$.

Of course for a general family of operators there are other alternatives
between having an eigenvalue zero and being invertible (e.g., having continuous
spectrum, having residual spectrum, etc.). In our case, the operators have a
spectrum which is the closure of the set of the eigenvalues.

The important fact about resonances is that if $\ep_0$ is a resonant value,
the range of the  operator  $\O_{\ep_0}$ has codimension at least $1$.

To prove our results  it will be useful to introduce a two-parameter family, say $\F_{\ep,\mu}$,
so that it will be easier to compute obstructions generated by resonances. Precisely,
our result is based on the following arguments:

\begin{itemize}
\item[$(i)$] we will show (see part $(b)$ of Theorem~\ref{thm:optimal})
that given a two-parameter family of problems,
$\F_{\ep,\mu}(U_{\ep,\mu} )= 0$, such that  $D\F_{\ep,0}(U_{\ep,0})$
is resonant, we cannot expect to obtain solutions analytic in $\mu$ near $\mu=0$
(we call this phenomenon \emph{``automatic analyticity''});
\item[$(ii)$] we will show that if there is a perturbative solution for
every one-parameter family, there has to be a jointly analytic solution in two
parameters (see Lemma~\ref{twovar});
\item[$(iii)$] the consequence of these two results
is that it is impossible that there is a solution that drives through
the resonances for every one-parameter family (see part $(a)$ of Theorem~\ref{thm:optimal}).
\end{itemize}

We will work mainly with the equation \equ{rearranged},
but - as anticipated before - we extend it adding a parameter for the nonlinearity.
In particular, we rewrite \equ{rearranged} as
\beq{equ:rearranged2}
\F_{\ep, \mu}(U_{\ep, \mu}) = N_{\ep} U_{\ep,\mu }  + A_\mu(U_{\ep, \mu})  = 0\ ,
\eeq
where $A_\mu(U_{\ep, \mu})\equiv G_\mu(U_{\ep, \mu})-f+\langle f\rangle$ and
$G_\mu$ is any smooth function of $\mu$ such that $G_0=G$.

We define the family of operators $\O_\ep$ as
\beq{Odefined}
\O_\ep \equiv D\F_{\ep,0}(U_{\ep, 0})= N_{\ep} + A'_0(U_{\ep, 0})\ .
\eeq
We will argue that if $\O_\ep$ has a resonance at $\ep = \ep_0$, it is very difficult
to have a family $A_\mu$ that allows us to have $U_{\ep_0, \mu}$
analytic in $\mu$ and which solves $\F_{\ep_0, \mu}(U_{\ep_0,\mu}) = 0$.

To make all this  precise, we endow the space of analytic families of
linear operators with the topology of  the supremum of the norm in a complex domain,
so that it is a Banach space.
We will always consider the domains in  $\mu$ to be a ball around
$\mu = 0$. The domains in $\ep$ could be either a ball around
$\ep = 0$ for the perturbative expansions or a ball around
$\ep = \ep_0$, where $\ep_0$ is a resonance. When dealing with functions
of two variables, we consider domains which are the product.

The key to the argument is to show that if there are analytic
solutions, the family $\F_{\ep_0, \mu}$ has to satisfy constraints
and that generic families violate them. Of course, if
one considers specific models in \eqref{equ:rearranged2},
it could in principle happen that the family automatically
satisfies the constraint. We will however show
that this does not happen in general and that, even in specific models
for \eqref{equ:rearranged2}, it is unlikely that one can make deformations
 satisfying the constraints imposed by the existence of
analytic solutions.

\begin{theorem}\label{thm:optimal}
Let  $\F_{\ep}$ be an analytic family of analytic operators from a Banach space to itself. Assume that there is a family $U_\ep$
defined  in a  domain of analyticity including
$\ep_0$, such that $\F_\ep(U_\ep)  = 0$ and that  $\ep_0$ is a resonant value for the  $D\F_\ep(U_\ep)$ family.

Consider an arbitrary small ball $B \subset \complex $ centered at $\ep_0$
and define $\A_B$ the space of analytic families of operators defined in
the ball endowed with the supremum topology.

Then, we have the following results.

$(a)$ In any sufficiently small ball of $\A_B$ centered at $\F_{\ep_0}$,
we can find a family of operators $\tilde \F_\ep$ such that the domain of
analyticity of the solution of $\tilde \F_\ep(U_\ep)=0$ does not include $\ep_0$.

$(b)$ For restricted two-parameter families $\F_{\ep,\mu}$ of the form \eqref{equ:rearranged2}, we
have the same result. Namely, if there is an analytic family $U_{\ep,0}$ satisfying
$\F_{\ep,0}(U_{\ep, 0}) = 0$ and $\ep_0$ is a resonant  value for
$D\F_{\ep,0}(U_{\ep, 0})$, then for
an open and dense set of families $A_\mu$, we can find  arbitrary small values
$\mu$, such that the family $\F_{\ep, \mu}$ does
not admit a solution $U_{\ep,\mu}$ which is analytic near $\mu=0$.
\end{theorem}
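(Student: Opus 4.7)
The plan is to establish part (b) first, since it contains the core algebraic obstruction, and then deduce part (a) from it via the auxiliary Lemma~\ref{twovar}.

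\smallskip

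\textbf{Part (b).} Because $\ep_0$ is resonant, the closed subspace $\Range(\O_{\ep_0})$ is proper in the target, so by Hahn--Banach there is a nonzero continuous linear functional $w^\ast$ annihilating $\Range(\O_{\ep_0})$. (In the concrete models considered here $\O_{\ep_0}$ is normal with compact resolvent, so one may simply take $w^\ast$ to be $L^2$-pairing with an element of $\ker\O_{\ep_0}$.) Now suppose, towards the desired contradiction, that $\F_{\ep_0,\mu}(U_{\ep_0,\mu})=0$ with $U_{\ep_0,\mu}$ analytic in $\mu$ at $\mu=0$ and $U_{\ep_0,\mu}\big|_{\mu=0}=U_{\ep_0,0}$. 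Differentiating in $\mu$ at $\mu=0$, using that $N_\ep$ is independent of $\mu$ in \eqref{equ:rearranged2}, gives
\[
\O_{\ep_0}\bigl(\partial_\mu U_{\ep_0,\mu}\big|_{\mu=0}\bigr)+\bigl(\partial_\mu A_\mu\bigr)(U_{\ep_0,0})\big|_{\mu=0}=0.
\]
Applying $w^\ast$ yields the scalar obstruction
\[
w^\ast\!\bigl((\partial_\mu A_\mu)(U_{\ep_0,0})\big|_{\mu=0}\bigr)=0.
\]
This is a nontrivial continuous linear constraint on the first Taylor coefficient of $A_\mu$ in $\mu$. The set of admissible families $A_\mu$ satisfying it is therefore contained in a closed proper hyperplane of the Banach space of analytic families with sup-norm; in particular it has empty interior, so its complement is open and dense. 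For every $A_\mu$ in this complement no analytic extension in $\mu$ at $\mu=0$ can exist, and one obtains arbitrarily small $\mu$ where the solution fails to exist along the branch that continues $U_{\ep_0,0}$.

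\smallskip

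\textbf{Part (a).} I would argue by contradiction, embedding the one-parameter problem into a two-parameter one and reducing to (b). Assume that every family in a small ball of $\A_B$ around $\F_{\ep_0}$ admits a solution analytic on a full open neighborhood of $\ep_0$. Choose an operator-valued perturbation $P\in\A_B$ whose ``first-order defect'' $w^\ast(P_{\ep_0}(U_{\ep_0}))$ is nonzero -- this is possible because the set of $P$ with nonzero defect is open and dense in $\A_B$ by the same Hahn--Banach argument used in part (b) -- and consider the deformation $\F_\ep+\delta P_\ep$ for $\delta$ ranging over a small complex disk $\Delta$. By assumption, for each $\delta\in\Delta$ there exists a solution $U_{\ep,\delta}$ analytic in $\ep$ near $\ep_0$ with $U_{\ep,0}=U_\ep$. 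Lemma~\ref{twovar} then promotes this pointwise analyticity to joint analyticity of $(\ep,\delta)\mapsto U_{\ep,\delta}$ in a bi-disk around $(\ep_0,0)$. We are now in the situation of part (b) with $\mu=\delta$, but the obstruction $w^\ast(P_{\ep_0}(U_{\ep_0}))$ is nonzero by construction. This contradiction forces the existence of some $\delta_0$ arbitrarily close to $0$ for which $\tilde{\F}_\ep:=\F_\ep+\delta_0 P_\ep$ has no solution extendable analytically to $\ep_0$, which is the desired small perturbation.

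\smallskip

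\textbf{Main obstacle.} The delicate step is the invocation of the automatic analyticity lemma (Lemma~\ref{twovar}): pointwise analyticity in $\ep$ for each $\delta$ does not by itself imply joint analyticity, since a priori the branch one selects at each $\ep$ may jump with $\delta$. The standard route is a Cauchy-integral representation on a small circle in $\ep$ around $\ep_0$ combined with a Hartogs/Morera argument, and this in turn requires a uniform a priori bound for the selected branches. Verifying such a uniform bound in the concrete Banach algebra $\A_{\rho,j,m}$ -- using the compactness built into the spectral decomposition, the Banach algebra property of Proposition~\ref{properties}, and the smoothing properties of $N_\ep^{-1}$ on the unresonant part of $B$ -- is where the real technical effort of the argument is concentrated. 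A secondary subtlety is the \emph{class} in which one is allowed to perturb in (a); the reduction to (b) above perturbs only the nonlinearity $A_\mu$, which is precisely the structure of the physically motivated models A, A$'$, B, B$'$, so the argument remains internal to the PDE setting rather than appealing to arbitrary operator perturbations.
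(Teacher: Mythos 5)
Your proposal is correct and follows essentially the same route as the paper: part (b) via the first-order power-series matching obstruction (the paper's Lemma~\ref{lem:obstruction}, which you phrase cleanly with a Hahn--Banach annihilator of $\Range(D\F_{\ep_0,0})$, showing the admissible families lie in a closed proper hyperplane), and part (a) by contradiction, combining the standing assumption with Lemma~\ref{twovar} (Hartogs) to get joint analyticity in $(\ep,\mu)$ and then violating the obstruction with a perturbation of nonzero defect. The only differences are cosmetic: the paper additionally develops the obstructions to all orders to claim infinite codimension (not needed for the open-and-dense statement of part (b)), and both you and the paper rely on the local-uniqueness hypothesis of Lemma~\ref{twovar} to keep the branches from jumping.
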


\subsection{Proof of Theorem~\ref{thm:optimal}}
The first element in the proof of Theorem~\ref{thm:optimal} is the
following elementary lemma showing that if one has analytic
solutions for all equations, then they have to be analytic in a second
parameter. Afterwards, we will identify obstructions for analyticity
in two parameters near a resonance (this obstruction is very similar
to Poincar\'e's obstructions to uniform integrability
(\cite[\S  81]{Poincare92}, see \cite{Llave96} for a reexamination of
\cite{Poincare92} with modern techniques and for a converse of the results of \cite{Poincare92}).

\begin{lemma}\label{twovar}
Consider a family of equations $\F_\ep(U) = 0$, where $\F_\ep$ is an
analytic family of nonlinear operators.  Endow the space of analytic operators with the supremum topology.

Assume that for all $\G_\ep$ in a neighborhood of $\F_\ep$ in the space of
analytic functions there is an analytic
solution $U_\ep$, which is locally unique. Then, for every two-parameter family
$\F_{\ep, \mu}$, such that $\F_{\ep,0}  = \F_{\ep}$, there exists a solution $U_{\ep,\mu}$,
which is analytic in the two parameters for arbitrarily small values of $\mu$.
\end{lemma}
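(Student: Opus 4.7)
The strategy is to factor the question through the space $\A$ of analytic operator-valued families, endowed with the supremum topology, and to show that the solution map $\A \ni \G \mapsto U_\G$ is analytic on the neighborhood of $\F$ provided by the hypothesis. Given a two-parameter family $\F_{\ep,\mu}$ with $\F_{\ep,0} = \F_\ep$, joint analyticity of $\F_{\ep,\mu}$ in $(\ep,\mu)$ implies that $\mu \mapsto \F_{\cdot,\mu}$ is an analytic curve in $\A$, lying inside the hypothesized neighborhood for $|\mu|$ sufficiently small. If the solution map is analytic there, composition yields an analytic curve $\mu \mapsto U_{\cdot,\mu}$, hence separate analyticity of $U(\ep,\mu)$ in $\mu$ (while analyticity in $\ep$ is given by assumption). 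Joint analyticity then follows from Hartogs' theorem for Banach-valued functions (separate analyticity together with local boundedness suffices), or equivalently from the Cauchy integral representation in each variable.

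First I would establish continuity of $\G \mapsto U_\G$. If $\G_n \to \G$ in $\A$, the analytic solutions $\{U_{\G_n}\}$ form a uniformly bounded family of analytic Banach-valued maps; Cauchy estimates yield equicontinuity on compact subdomains, and a normal-families argument extracts a subsequential limit $V$ satisfying $\G(V) = 0$. Local uniqueness then forces $V = U_\G$, so the entire sequence converges. To upgrade continuity to analyticity, I would invoke the analytic implicit function theorem in Banach spaces; the key ingredient is invertibility of the linearization $D_U \F_\ep(U_\ep)$ at the solution. Granted this invertibility, the equation $\F_{\ep,\mu}(U) = 0$ can be solved locally near each $(\ep, 0, U_{\ep,0})$ for $U$ as a jointly analytic function of $(\ep,\mu)$, and local uniqueness glues these local branches into the desired $U_{\ep,\mu}$ on the full domain.

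The main obstacle will be extracting invertibility of $D_U \F_\ep(U_\ep)$ from the purely existential hypothesis of a ``locally unique analytic solution for every nearby family.'' I would argue by contradiction via a Fredholm alternative in the functional setting of the paper: a nontrivial kernel of $D_U \F_\ep(U_\ep)$ would, after a suitably chosen small perturbation of $\F$ in $\A$, either produce a one-parameter family of solutions (violating local uniqueness) or obstruct solvability along certain directions in the cokernel (violating existence for all nearby families). Since the linearized operators considered throughout this paper differ from invertible ones by compact (in particular, Fredholm of index zero) contributions, trivial kernel forces invertibility, and the analytic implicit function theorem then closes the argument.
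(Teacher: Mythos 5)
Your proposal has a genuine gap, and it sits exactly where you flag ``the main obstacle'': the invertibility of $D_U\F_\ep(U_\ep)$. The analytic implicit function theorem route requires this invertibility, but nothing in the hypothesis hands it to you, and your Fredholm-alternative sketch is not a proof: a nontrivial kernel of the linearization does \emph{not} in general produce a one-parameter family of solutions of the nonlinear equation (think of $U^3=0$, where the solution $U=0$ is isolated and locally unique yet the linearization vanishes), and showing that a nontrivial kernel ``obstructs solvability for some nearby family'' is precisely the content of the separate, nontrivial obstruction lemma (Lemma~\ref{lem:obstruction}) that the paper proves by order-by-order power-series matching. Worse, the whole point of Lemma~\ref{twovar} in the paper's architecture is that it must hold \emph{at resonances}, i.e.\ exactly where $D\F_{\ep_0}(U_{\ep_0})$ has a kernel: Theorem~\ref{thm:optimal} applies the lemma there and derives a contradiction with Lemma~\ref{lem:obstruction}. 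A proof of the lemma that presupposes (or first derives) invertibility of the linearization either begs the question or silently absorbs the entire obstruction argument into one step. A secondary issue: your separate analyticity in $\mu$ for fixed $\ep$ also cannot be obtained directly from the hypothesis, because the ``vertical'' family $\mu\mapsto\F_{\ep_0,\mu}$, reparametrized as a one-parameter family, is not close to $\ep\mapsto\F_\ep$ in the supremum topology; continuity of the solution map plus analyticity in $\ep$ alone does not give joint analyticity (e.g.\ $(\ep,\mu)\mapsto\bar\mu$).

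The paper's proof avoids the linearization entirely. For small $\alpha,\beta$ the restriction $\G_\ep=\F_{\ep,\alpha\ep+\beta}$ of the two-parameter family to the line $\mu=\alpha\ep+\beta$ is close to $\F_\ep=\F_{\ep,0}$ in the supremum topology, so the hypothesis supplies an analytic solution along every such line. Taking two transversal foliations by lines (two slopes $\alpha_1\ne\alpha_2$) and using local uniqueness to identify the solutions where the leaves cross, one obtains separate analyticity in the linear coordinates $\beta_i=\alpha_i\ep-\mu$, and Hartogs' theorem (applied to $\ell(U)$ for linear functionals $\ell$, with weak analyticity implying strong analyticity) gives joint analyticity in $(\beta_1,\beta_2)$, hence in $(\ep,\mu)$. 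If you want to salvage your write-up, replace the implicit-function-theorem step with this restriction-to-lines argument.
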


\subsubsection{ Proof of Lemma~\ref{twovar} }

Given a family of operators depending on two parameters $\F_{\varepsilon, \mu}$, we fix
$\alpha$, $\beta$ and consider the one parameter family defined as
$\G_\varepsilon = \F_{\varepsilon, \alpha \varepsilon + \beta}$. By the
hypothesis, if $\alpha, \beta$ are small, we can find a solution $U_\ep$, so that
$\G_\ep(U_\ep) = 0$.

Geometrically, if we let $\beta$ vary, then the lines
$(\varepsilon,\alpha \varepsilon + \beta)$ form a foliation. For a
different value of $\alpha$, we obtain a transversal foliation.  The
solution is analytic when we restrict it to the leaves of each of the two
transversal foliations. Note that we are using the hypothesis of local uniqueness
to conclude that the solutions for two families are the same.

Precisely, if we choose $\alpha_1 \ne \alpha_2$, we can consider a change of coordinates
from $(\ep, \mu)$ to $\beta_1, \beta_2$  given by
\beq{trivialchange}
\alpha_1 \ep - \mu = \beta_1\ , \quad \quad \alpha_2 \ep - \mu = \beta_2\ ,
\eeq
which gives
$$
\ep = \frac{-\beta_1 + \beta_2}{-\alpha_1 + \alpha_2}\ , \quad \quad
\mu  = \frac{-\alpha_2 \beta_1 + \alpha_1  \beta_2}{-\alpha_1 + \alpha_2}\ .
$$
By hypothesis the solution $U_{\beta_1,\beta_2}$ of the two-parameter family $\F_{\ep,\mu}$
is analytic in $\beta_1$ for $\beta_2$ fixed and in $\beta_2$ for $\beta_1$ fixed.
This is the hypothesis of Hartogs theorem (\cite{Krantz,Narasimhan}),
so that we can conclude that the function $U_{\beta_1,\beta_2}$ is jointly analytic in $\beta_1$, $\beta_2$ and,
hence, it is jointly analytic in $\ep$, $\mu$.

If the operators act on infinite dimensional Banach spaces, we can reduce the proof to the classical
result for complex valued function by observing that we can  apply
Hartogs theorem to $\ell(U_\ep)$, where $\ell$ is a linear functional from the Banach space to the complex.
It is also well known (\cite{Reed-Simon-Vol1,HilleP}) that functions that are analytic in this weak sense are strongly
analytic.
Alternatively, we could just note that the proof of Hartog's theorem works for functions taking values in Banach spaces.
\qed

\begin{remark}\label{Poincare-Hartogs}
It is amusing to note that Lemma~\ref{twovar} allows one to improve
the results of \cite{Poincare92}. It immediately shows that if all systems in
a neighborhood remained analytically integrable, any two parameter family would be
uniformly integrable. Hence, the obstructions to uniform integrability discovered
by \cite{Poincare92} show that we can get non-integrable systems in any neighborhood.
Of course, even if Poincar\'e was one of the creators of the theory of several complex variables,
he did not know about Hartogs theorem.  Under the extra assumption of
uniform boundedness (which is not so unreasonable in the present case), the analogue of
Hartogs theorem was presumably known.
\end{remark}

Our next result shows that there are obstructions to the existence of
solutions analytic in two variables in two-parameter families near resonances.
This is an elementary application of power-series matching. Notice that the argument works in the generality of
mappings into Banach spaces, since it is really a soft
argument which applies in many other contexts.

One subtlety is that, if we consider the restricted class of families of operators as in \equ{equ:rearranged2}, it can, in principle,
happen that the obstructions vanish for the restricted family. So, when we consider restricted families such as
\eqref{equ:rearranged2}, we will need to verify that the family is general enough to be affected by the
obstructions.

\begin{lemma} \label{lem:obstruction}
Consider the two-parameter family $\F_{\ep, \mu}$.
For some $\ep_0$, assume that the following equation holds: $\F_{\ep_0,0}(U_{\ep_0,0})  = 0$.
If the range of $D\F_{\ep_0,0}$  has  codimension  at least $1$,
then the space of families for which there is a solution is
contained in a set of infinite codimension.

Moreover, for the restricted families of the form \eqref{equ:rearranged2}, if we can find
$U_{\ep, 0}$ solving $\F_{\ep, 0}(U_{\ep,0}) = 0$, then there exists an arbitrarily
small $\mu$, such that the family $\F_{\ep, \mu}$ does not have solutions
close to $U_{\ep,0}$, which are analytic near $\mu  = 0$.
\end{lemma}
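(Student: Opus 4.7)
My approach is to translate failure of $\mu$-analyticity into obstructions on a formal power-series expansion in $\mu$, and then count independent obstructions. Write any putative solution as $U_{\ep_0,\mu}=U_{\ep_0,0}+\sum_{k\ge 1}\mu^k V_k$ and expand the operator as $\F_{\ep_0,\mu}=\F_{\ep_0,0}+\sum_{j\ge 1}\mu^j \F^{(j)}_{\ep_0}$. Substituting into $\F_{\ep_0,\mu}(U_{\ep_0,\mu})=0$, Taylor-expanding the composition, and matching powers of $\mu$, I obtain a triangular recursion
\[
L\,V_k \;=\; R_k\bigl(V_1,\ldots,V_{k-1};\,\F^{(1)}_{\ep_0},\ldots,\F^{(k)}_{\ep_0}\bigr),\qquad k\ge 1,
\]
where $L\equiv D\F_{\ep_0,0}(U_{\ep_0,0})$ and $R_k$ is polynomial in the previously determined $V_j$ and depends linearly (with a nontrivial constant term) on the new Taylor coefficient $\F^{(k)}_{\ep_0}$.

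By the codimension hypothesis, $\mathrm{Range}(L)$ is a proper closed subspace, so by Hahn--Banach there exists a nonzero continuous linear functional $\ell$ annihilating $\mathrm{Range}(L)$. A necessary condition for solvability at level $k$ is then $\ell(R_k)=0$. These form a countable sequence of scalar conditions on the family $\F_{\ep_0,\mu}$; they are \emph{independent} because $R_k$ depends nontrivially on $\F^{(k)}_{\ep_0}$, which does not appear in any $R_j$ with $j<k$, so one can perturb the single coefficient $\F^{(k)}_{\ep_0}$ to violate the $k$-th obstruction while preserving all earlier ones. This places the set of families admitting an analytic solution inside a subset of infinite codimension, proving the first assertion.

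For the restricted families of the form \eqref{equ:rearranged2}, the $k$-th Taylor coefficient of $\F_{\ep_0,\mu}$ in $\mu$ is exactly the $k$-th coefficient $A_k$ in the expansion $A_\mu=\sum_{j\ge 0}\mu^j A_j$. At order $k=1$ the recursion gives $LV_1=-A_1(U_{\ep_0,0})$, so the obstruction reads
\[
\ell\bigl(A_1(U_{\ep_0,0})\bigr)\;=\;0.
\]
The space of admissible first-order perturbations $A_1$ is rich enough to realize any prescribed value at the single point $U_{\ep_0,0}$ (for instance, take $A_1$ equal to a constant map into any chosen vector $w$ in the target space), so the linear map $A_1\mapsto \ell(A_1(U_{\ep_0,0}))$ is a nonzero continuous functional. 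Its kernel is a closed hyperplane of codimension $1$ inside the space of linear-order perturbations, and in its open dense complement any $A_\mu$ produces a family for which the formal power-series solution already fails at order $\mu^1$, so no $\mu$-analytic branch issues from $U_{\ep_0,0}$; consequently solvability fails at arbitrarily small $\mu$.

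\textbf{Main obstacle.} The power-series manipulations and cokernel argument are formal and routine; the substantive step is verifying that, inside the restricted class \eqref{equ:rearranged2} where $N_\ep$ is fixed and only the nonlinearity is deformed, the admissible perturbations genuinely produce enough independent directions to violate the obstructions. For the first-order claim this boils down to pointwise surjectivity of $A_1\mapsto A_1(U_{\ep_0,0})$, which is immediate; for the infinite-codimension claim in the restricted class one analogously uses independence of the coefficients $A_k$ in the Taylor expansion of $G_\mu$. A minor technical point to handle is that when the resonance at $\ep_0$ is not isolated one must still produce a single nonzero cokernel functional $\ell$, but this is guaranteed by the codimension hypothesis alone and no uniqueness of $\ell$ is required in the argument.
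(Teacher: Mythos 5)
Your proof is correct and follows essentially the same route as the paper: power-series matching in $\mu$, a cokernel functional detecting the order-$k$ obstruction $\ell(R_k)=0$ whose dependence on the new Taylor coefficient $\F^{(k)}_{\ep_0}$ makes the conditions independent (hence infinite codimension), and for the restricted class \eqref{equ:rearranged2} the first-order obstruction $\ell(A_1(U_{\ep_0,0}))=0$ cutting out a closed hyperplane whose complement gives the desired families. Your explicit use of Hahn--Banach and of constant perturbations to realize any target value at $U_{\ep_0,0}$ makes the second claim slightly more concrete than the paper's own wording, but the underlying argument is the same.
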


{\bf Proof of Lemma~\ref{lem:obstruction}.}

If there is a solution $U_{\ep, \mu}$ of $\F_{\ep, \mu}(U_{\ep, \mu})=0$ analytic in $\mu$, we
should have:
\begin{equation}\label{firstobs}
D\F_{\ep_0,0}(U_{\ep_0,0})\ \partial_\mu U_{\ep_0,0}
+ \partial_\mu\ \F_{\ep_0,0}(U_{\ep_0,0}) = 0\ .
\end{equation}

Clearly, if the perturbation is such that
$ \partial_\mu \F_{\ep_0,0}(U_{\ep_0,0})$ is not in the
range of $D\F_{\ep_0,0}$, then there is no possibility of finding a solution
of \eqref{firstobs} and, a fortiori, no possibility of finding an
analytic solution.

Of course, the families $\F_{\ep, \mu}$ for which the first jet is
in the range is a codimension one set of perturbations. Hence,
the derived necessary conditions imply that the perturbation has to be
in this set.

Obviously, the necessary condition above is not the
only one. Indeed, one can obtain even more obstructions for the existence
of another branch  by considering higher order terms.
Matching terms up to order $N$, we obtain that
\begin{equation}\label{obstructionN}
D\F_{\ep_0,0}(U_{\ep_0,0})(\partial_\mu)^N U_{\ep_0,0}
+ (\partial_\mu)^N  \F_{\ep_0,0}(U_{\ep_0,0})  +
R_N = 0\ ,
\end{equation}
where $R_N$ is an expression involving only derivatives of order
up to $N-1$.

Clearly, the fact that $R_N+ (\partial_\mu)^N \F_{\ep_0,0}(U_{\ep_0,0}) $ is in the range of $D\F_{\ep_0,0}$ gives
another obstruction for the perturbations.

If the range of $D\F_{\ep_0,0}$ has codimension $k$, we claim that
the set of families that matches
the necessary conditions up to order $N$ is a submanifold of
codimension $Nk$.  In particular, the set of maps that satisfy
all the obstructions is contained in a submanifold of infinite codimension, which becomes
a very meager set in the sense of Baire category theory.

The proof of the first claim of the lemma is very easy. The key observation is that the
obstruction at order $N$ (see \eqref{obstructionN}) involves that a
given expression is in the range of $D\F_{\ep_0, 0}$. This expression is very
complicated in the coefficients of order up to $N -1$, but its
dependence on the coefficient of order $N$ is very simple. Hence, we
obtain that for each of the functions satisfying the condition at
order $N-1$, the obstruction at order $N$ takes the form that the
$N$--th derivative should be an explicit expression over all the previous ones
plus the range of $D\F_{\ep_0,0}$. Since the range of $D\F_{\ep_0,0}$
has codimension $k$, this increments by $k$ the codimension of the solution of
\equ{obstructionN}. In the limit we obtain that the solution of \equ{firstobs} is
contained in a set of infinite codimension.

The second claim of the lemma is obtained observing that for the families of operators
as in \eqref{equ:rearranged2}, then equation \equ{firstobs} gives restrictions
to the derivatives $\partial_\mu \F_{\ep_0,0}$. Then, we want to show
that the range of $\partial_\mu \F_{\ep_0,0}$ is in the complementary of the range of $D\F_{\ep_0,0}$.
Given that in the restricted family one has $\partial_\mu \F_{\ep,\mu}=\partial_\mu  A_{\mu}(U_{\ep,\mu})$
and recalling that the set of $A_{\mu}$ has infinite codimension, then we conclude that the range
of $\partial_\mu \F_{\ep_0,0}$ is not in the range of $D\F_{\ep_0,0}$ and therefore the family $\F_{\ep, \mu}$
does not admit solutions close to $U_{\ep,0}$, which are analytic near $\mu  = 0$.
\qed

\smallskip

Now we are in the position to finish the proof of Theorem~\ref{thm:optimal}. Consider
the family $\F_\ep$ of analytic operators and let $\ep_0$ be a resonant value.
Let $B \subset \complex $ be a ball around $\ep_0$. We assume that all the perturbations
of the family admit an analytic solution that goes across the ball $B$.
If indeed there were solutions for all perturbations, then using
Lemma~\ref{twovar} the family should be analytic in two variables. However,
near a resonance, which is contained in the ball $B$,
by Lemma~\ref{lem:obstruction} we obtain that there are many perturbations for which this is impossible.
Hence, we conclude that the assumption that there were solutions for all
perturbations analytic in the ball $B$ is false. This provides part $(a)$ of Theorem~\ref{thm:optimal}.

We conclude by mentioning that part $(b)$ of Theorem~\ref{thm:optimal}
is obtained from a straightforward implementation of the second statement of Lemma~\ref{lem:obstruction}.
\qed

\smallskip

Note that the proof of Theorem~\ref{thm:optimal} goes by
contradiction. We started by assuming that all the systems gave solutions
that were
analytic in a ball and we concluded that they were not, except in a
set of infinite codimension. This, of
course, contradicts  the hypothesis that for any perturbation,
there are analytic solutions
extending through a neighborhood of the resonant $\ep_0$
and we conclude that there is one family which does not extend.

Unfortunately, this does not allow us to conclude anything beyond
the fact that there are families which do not admit solutions that  extend through the resonance.
Once we conclude that the hypothesis fails, we cannot obtain any of the conclusions  that we obtained from
assuming its existence (and which we used to derive a contradiction).
In particular, the argument  does not allow us to conclude that the set of families which extend is
infinite codimension. The infinite codimension statement
was predicated on the fact that we had at least a solution.

If, indeed, we could show that the set of functions for which a
solution extends through a resonance is infinite codimension, we
could use Baire category theorem to show that there is a residual
set of families for which the analyticity domain does not include
any resonance.  Even if the argument above does not allow us
to conclude that rigorously, we formulate the following conjecture.

\begin{conjecture}\label{conj:optimal1}
For an open and dense set of families (in the topology indicated above),
there is no solution defined in a neighborhood of any of the resonances.
\end{conjecture}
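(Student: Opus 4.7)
The plan is to combine the local obstruction of Theorem~\ref{thm:optimal} with a Baire category argument that runs simultaneously over all resonances. First, I would establish that within any bounded subregion of the $\ep$-plane the set of resonant values is at most countable: the spectrum of $\O_\ep = D\F_\ep(U_\ep)$ is discrete under the hypotheses of Section~\ref{sec:Lopassumption}, each eigenvalue depends analytically on $\ep$, and unless an eigenvalue vanishes identically (a degeneracy ruled out by the explicit form of $N_\ep$ away from the imaginary axis) its zero set is isolated. Enumerate the resonances so obtained as $\{\ep_k\}_{k\in\nat}$.

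Fix a ball $B$ around some $\ep_k$ and let $\mathcal{E}_k \subset \A_B$ denote the set of families admitting an analytic solution through $\ep_k$. Theorem~\ref{thm:optimal}(a) already shows that $\mathcal{E}_k$ has empty interior, since any element of $\mathcal{E}_k$ admits arbitrarily small perturbations leaving $\mathcal{E}_k$. The crucial additional step is to verify that $\mathcal{E}_k$ is closed in the supremum topology of $\A_B$: given $\F^{(n)} \to \F$ in $\A_B$ with analytic solutions $U^{(n)}_\ep$, the contraction estimates of Section~\ref{sec:fixed} give bounds on $U^{(n)}_\ep$ in $\A_{\rho,j,m}$ which are uniform in $n$ (the contracting ball has radius $\beta \sim \sigma$ depending only on the forcing and on the parabolic domain, not on the specific family). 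A normal-families argument then extracts a subsequence whose limit $U_\ep$ is analytic through $\ep_k$ and solves $\F(U_\ep) = 0$, placing $\F$ in $\mathcal{E}_k$. Once $\mathcal{E}_k$ is closed and nowhere dense for each $k$, the Baire category theorem gives that $\bigcap_k \mathcal{E}_k^c$ is a dense $G_\delta$ in $\A_B$; when $B$ contains only finitely many resonances $\ep_1,\dots,\ep_N$, this reduces to the finite intersection $\bigcap_{k=1}^N \mathcal{E}_k^c$, which is genuinely open and dense and yields the conjecture in the stated form.

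The main obstacle will be the closedness of $\mathcal{E}_k$. At $\ep_k$ the operator $\O_{\ep_k}$ has a kernel, so the implicit function theorem provides no automatic continuation of the limit solution across the resonance; one must rely instead on external a priori estimates together with a Montel-type compactness argument in $\A_{\rho,j,m}$. A secondary subtlety, already visible in the proof of Lemma~\ref{lem:obstruction}, is to confirm that within the restricted class of families \equ{equ:rearranged2} the perturbations $A_\mu$ are rich enough to realise the infinitely many obstructions at $\ep_k$, so that the empty-interior conclusion of Theorem~\ref{thm:optimal}(a) remains valid within this subclass rather than only in the ambient Banach space of all analytic operators. Both points appear tractable given the explicit diagonalisation of $N_\ep$ in the eigenbasis of $\Lop$, but making the Montel step uniform across a sequence of families crossing the resonance is precisely where a rigorous argument would have to be most careful.
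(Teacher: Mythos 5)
The statement you are proving is labelled a \emph{Conjecture} in the paper, and deliberately so: the authors explain immediately before it that their contradiction argument ``does not allow us to conclude that the set of families which extend is infinite codimension'', because every intermediate conclusion of Theorem~\ref{thm:optimal} (joint analyticity via Lemma~\ref{twovar}, the codimension count of Lemma~\ref{lem:obstruction}) is \emph{predicated on first assuming that all nearby families admit locally unique analytic solutions through the resonance}. Once that hypothesis is refuted, one only learns that \emph{some} perturbation fails to extend, which is exactly part $(a)$ of Theorem~\ref{thm:optimal} and nothing more. Your plan correctly identifies that to upgrade this to ``open and dense'' one needs $\mathcal{E}_k$ to be nowhere dense, hence (in your route) closed with empty interior, and this is precisely where the proposal breaks. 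The uniform a priori bounds you invoke from Section~\ref{sec:fixed} are contraction estimates valid only on the domains $\Omega_{\sigma,B}$, which are \emph{constructed to exclude} neighborhoods of the resonances: at a resonant value $\ep_k$ the operator $\O_{\ep_k}=D\F_{\ep_k}(U_{\ep_k})$ has a kernel, $N_\ep^{-1}$ is unbounded nearby, the radius-$\beta$ ball argument of \equ{Cond1}--\equ{Cond3} gives no control, and a hypothetical solution passing through $\ep_k$ is not produced by, and not bounded by, the fixed point scheme. Your Montel/normal-families extraction therefore has no uniform bound to start from, and the closedness of $\mathcal{E}_k$ remains entirely unestablished. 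This is not a technical refinement to be supplied later; it is the substantive content that separates the proved Theorem~\ref{thm:optimal} from the Conjecture.

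There are two further structural problems. First, the resonances are not a fixed countable subset $\{\ep_k\}$ of the $\ep$-plane: by Definition~\ref{def:resonance} they are the zeros of eigenvalues of $D\F_\ep(U_\ep)$, which depends on the solution and hence on the family, so they move as you perturb $\F$ (the paper flags exactly this as ``one slightly delicate point'' and resolves it only at the level of Conjectures~\ref{conj:optimal2} and \ref{conj:optimal3}). Consequently your sets $\mathcal{E}_k$, indexed by fixed points $\ep_k$, do not partition the failure set described by the conjecture, and the empty-interior conclusion you extract from Theorem~\ref{thm:optimal}$(a)$ applies only to the subset of $\mathcal{E}_k$ on which $\ep_k$ happens to remain resonant for the perturbed linearization. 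Second, even granting closedness and a correct indexing, a countable intersection of open dense sets is a dense $G_\delta$, not an open dense set; your reduction to finitely many resonances per ball would need an argument that resonances of $D\F_\ep(U_\ep)$ do not accumulate in $B$ uniformly over the families considered, which again requires control of the spectrum through the resonance that the paper's estimates do not provide. In short, the proposal reproduces the heuristic the authors themselves sketch and then stops at exactly the gaps they identify as open; it does not close them.
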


Notice also that for the equations considered in \eqref{equ:rearranged2},
if we have a perturbative solution of the equation, the resonances of the perturbed equation have to be close to the
resonances of $N_\ep$. Hence, we also have the following conjecture.

\begin{conjecture}\label{conj:optimal2}
Consider the problem in \eqref{equ:rearranged2}. For an open and dense set of nonlinearities,
the response solution has a singularity at a distance less than $C|\ep|$ from the resonances of $N_\ep$.
\end{conjecture}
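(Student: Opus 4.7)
The plan has two main ingredients: first, show that the resonances of the full linearized operator $D\F_\ep(U_\ep) = N_\ep + A'(U_\ep)$ lie within distance $O(|\ep|)$ of the resonances of the unperturbed operator $N_\ep$; second, invoke the mechanism behind Theorem~\ref{thm:optimal} and Conjecture~\ref{conj:optimal1}, applied to the actual linearization, to produce, for an open and dense set of nonlinearities, a singularity of the response solution near one of those resonances of $D\F_\ep(U_\ep)$, hence within $O(|\ep|)$ of a resonance of $N_\ep$.

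For the first step, I would use that the perturbative expansion forces $U_\ep = O(\ep)$ in $\A_{\rho,j,m}$, because there is no $\ep^0$ term in \eqref{correction}. By the composition/Lipschitz properties collected in Proposition~\ref{prop:composition} and the fact that $A'(0) = 0$ at the approximate solution, the operator $A'(U_\ep)$ has norm $O(|\ep|)$ in $\A_{\rho,j,m}$. Since $N_\ep$ is normal with discrete spectrum $\{\lambda_{n,k}(\ep)\}$ in the product basis of Fourier modes in $\theta$ and eigenfunctions of $\Lop$, standard analytic perturbation theory for normal operators yields that each eigenvalue of $N_\ep + A'(U_\ep)$ lies within $C|\ep|$ of some $\lambda_{n,k}(\ep)$. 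In particular, each resonance $\ep^*$ of $D\F_\ep(U_\ep)$ is within $C|\ep^*|$ of some resonance of $N_\ep$. This reduces Conjecture~\ref{conj:optimal2} to proving that singularities of the response solution cluster at resonances of the true linearization, which is precisely the content of Conjecture~\ref{conj:optimal1} applied to the family $D\F_{\ep,0}(U_{\ep,0}) + A'_\mu - A'_0$.

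For the second step, I would embed the problem in the two-parameter family \eqref{equ:rearranged2} with $A_\mu$ deforming the nonlinearity, and apply the obstruction computation of Lemma~\ref{lem:obstruction} at every resonance of $D\F_{\ep,0}(U_{\ep,0})$. Together with Lemma~\ref{twovar}, this produces infinite-codimension obstructions to continuing the solution across any such resonance, so that in each small neighborhood in the space of nonlinearities there exist arbitrarily small deformations for which the solution fails to be analytic near the resonance. The resulting singularity, by Step~1, lies in a disk of radius $C|\ep|$ around a resonance of $N_\ep$.

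The hardest part will be making the genericity statement uniform across \emph{all} resonances simultaneously, rather than one at a time. Baire category in the space of analytic families $A_\mu$ gives a residual set of perturbations for each fixed resonance, but the resonances of $N_\ep$ accumulate on the imaginary axis, so a naive intersection is empty. One would need either a quantitative version of Lemma~\ref{lem:obstruction} whose obstruction direction varies continuously (and not too wildly) in the resonance, or a single common functional on the nonlinearity whose non-vanishing produces an obstruction at every resonance. A secondary obstacle is the uniformity of the constant $C$ in Step~1 as $|\ep|\to 0$: since $\|N_\ep^{-1}\|$ degenerates near the imaginary axis, controlling the perturbation of the spectrum may require restricting $\ep$ to a sector or cusp of the form $\Omega_{\sigma,B}$ from \eqref{parabolicdomain}, which is consistent with the way the response solutions are constructed in Theorem~\ref{thm:main} anyway.
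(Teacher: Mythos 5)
There is a genuine gap, and it is worth being explicit that the statement you are trying to prove is stated in the paper as a \emph{conjecture}: the authors offer only a heuristic argument for it and explain precisely why their rigorous result (Theorem~\ref{thm:optimal}) falls short of it. Your proposal retraces that same heuristic and inherits exactly the gaps the authors identify. Your Step~1 (resonances of $D\F_\ep(U_\ep)=N_\ep+A'(U_\ep)$ lie within $O(|\ep|)$ of those of $N_\ep$, via $\|A'(U_\ep)\|=O(|\ep|)$ and the spectral stability of the normal operator $N_\ep$) is essentially the paper's remark that ``the eigenvalues can be continued analytically in $\mu$'' using \cite{Kato}-type perturbation theory, and is plausible; but your Step~2 is where the argument breaks. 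The mechanism of Theorem~\ref{thm:optimal} is a proof by contradiction: Lemma~\ref{lem:obstruction} computes obstructions \emph{assuming} an analytic solution already exists through the resonance, and the conclusion is only that there exist arbitrarily small perturbations for which no such solution exists. As the paper stresses, ``the argument does not allow us to conclude that the set of families which extend is of infinite codimension,'' because ``the infinite codimension statement was predicated on the fact that we had at least a solution.'' Hence you get density of the bad set near a given resonance, but neither openness nor residuality, and certainly not the ``open and dense set of nonlinearities'' claimed in the conjecture.

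You correctly flag the second obstacle yourself --- uniformity over all resonances, which accumulate on the imaginary axis --- but flagging it is not resolving it; without either a quantitative obstruction varying controllably in the resonance or a single common functional, the argument does not close even at the level of a residual set, let alone open and dense. There is also a circularity in Step~1 that should be acknowledged: to locate the resonances of the true linearization near a point $\ep_0$ you need $U_\ep$ to be defined there, which is exactly what is in question if the solution is singular nearby. In short, your proposal is a faithful reconstruction of the paper's \emph{motivation} for Conjecture~\ref{conj:optimal2}, reducing it to Conjecture~\ref{conj:optimal1} plus a spectral perturbation estimate, but it is not a proof, and the paper contains none.
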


We hope that, perhaps, the argument used in Theorem~\ref{thm:optimal} can be strengthened
to obtain Conjecture~\ref{conj:optimal2}. There could also be
other strategies to prove Theorem~\ref{thm:optimal}, which are direct and not just by contradiction.
A more constructive argument could possibly take the form of observing that, near the resonances,
one small change in the model leads to a very large change in the response function. Hence, one could
hope to pile up perturbations of the model in such a way that the model remains well defined, but that the
response function breaks down.\\

To apply the above results to our models, one slightly delicate point is that the linearization depends on the solution
$U_\ep$. Arguing again by contradiction and in a non-rigorous way, we observe that we can compute the eigenvalues of
$N_\ep U_\ep+\mu  G(U_\ep)$ by using a perturbative expansion as in \cite{Kato}. Even if a full proof will be complicated,
one can imagine that the eigenvalues can be continued analytically in $\mu$. Hence, the values of $\ep$ for which an eigenvalue vanishes
will move continuously (of course, if there are some non-degeneracy assumptions, which is reasonable to conjecture hold generically) and
they will move differentiably.

Hence, we are lead to the following conjecture.

\begin{conjecture}\label{conj:optimal3}
For a generic family, we can find a constant $C$, such that
no ball of the form $\{ \ep :\ | \ep -\ep_0| \le C \ep_0^2 \}$, with $\ep_0$ a resonance for $N_\ep$, is completely contained
in the domain of analyticity of the response function.  In other words, for each of the balls as above, we can find
a point not in the domain of analyticity.
\end{conjecture}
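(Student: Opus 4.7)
The plan is to reduce Conjecture~\ref{conj:optimal3} to the generic version of Theorem~\ref{thm:optimal} (i.e.\ Conjecture~\ref{conj:optimal1}) by combining analytic perturbation theory of eigenvalues with the special quadratic geometry of the analyticity domains $\Omega_{\sigma,B}$. The strategy has three pieces: (1) locate the true resonances of the full linearized operator $\O_\ep$ in \eqref{Odefined} near each resonance $\ep_0$ of the free operator $N_\ep$; (2) quantify the displacement and show it is much smaller than $|\ep_0|^2$; (3) use the generic obstruction of Theorem~\ref{thm:optimal} to conclude that a ball of radius $C|\ep_0|^2$ around $\ep_0$ cannot lie inside the analyticity domain of the response function.

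First I would fix a resonance $\ep_0$ of $N_\ep$, i.e.\ a zero of some multiplier $\lambda_{n,k}(\ep)=-(2\pi\omega\cdot k)^2+2\pi i(\omega\cdot k)/\ep+\lambda_n$. A direct computation gives $\partial_\ep \lambda_{n,k}(\ep_0)=-2\pi i(\omega\cdot k)/\ep_0^2$, whose modulus is of order $|\ep_0|^{-1}$, because at a small resonance necessarily $|\omega\cdot k|\sim |\ep_0|^{-1}$. On the other hand, the perturbation $A'_0(U_{\ep,0})$ appearing in $\O_\ep=N_\ep+A'_0(U_{\ep,0})$ is of size $O(|\ep|)$, since the approximate solution vanishes to first order at $\ep=0$ (Section~\ref{sec:prescribed}) and the contraction argument of Section~\ref{sec:fixed} controls the true solution on the same ball. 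Standard analytic perturbation theory for isolated eigenvalues (\cite{Kato}) then locates a zero $\tilde\ep_0$ of the perturbed eigenvalue at distance
\[
|\tilde\ep_0-\ep_0|\;=\;O\!\left(\frac{\|A'_0(U_{\ep_0,0})\|}{|\partial_\ep\lambda_{n,k}(\ep_0)|}\right)\;=\;O(|\ep_0|^3),
\]
which is asymptotically much smaller than $|\ep_0|^2$.

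Next I would apply Theorem~\ref{thm:optimal}(a) at the point $\tilde\ep_0$: in any sufficiently small ball of analytic families around the given nonlinearity, there are families whose response function is not analytic across $\tilde\ep_0$. Since $|\tilde\ep_0-\ep_0|\le C_1|\ep_0|^3$, for any $C>C_1$ the ball $\{|\ep-\ep_0|\le C|\ep_0|^2\}$ automatically contains $\tilde\ep_0$ and hence contains a singular point of the response function. Running this argument simultaneously across all resonances and combining it with a Baire category argument, along the lines sketched for Conjecture~\ref{conj:optimal1}, then produces a dense $G_\delta$ set of nonlinearities for which \emph{every} resonance $\ep_0$ is accompanied by a singular point inside $\{|\ep-\ep_0|\le C|\ep_0|^2\}$.

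The hard part will be promoting the single-resonance existence statement of Theorem~\ref{thm:optimal} into a genuine genericity statement uniform in $\ep_0$. To run Baire category one needs the set of nonlinearities that \emph{admit} an analytic extension across a given resonance to be closed with empty interior in the space of analytic families, and the argument in Lemma~\ref{lem:obstruction} only produces infinitely many codimension-one obstructions, not directly an open dense complement. A quantitative, uniform-in-$\ep_0$ version of Lemma~\ref{lem:obstruction} (showing that the obstruction at $\tilde\ep_0$ persists under deformations of size $\gtrsim |\ep_0|^{q}$ for some controllable $q$) seems necessary. A secondary issue is that one must realize the perturbations within the restricted class \eqref{equ:rearranged2}; the second statement of Lemma~\ref{lem:obstruction} handles this at a single resonance, but combining it with the Baire argument across countably many resonances, while simultaneously maintaining the boundary and non-degeneracy conditions of Section~\ref{sec:hconditions}, is the step where most of the technical work would lie.
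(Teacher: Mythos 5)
The statement you are asked about is a \emph{conjecture}: the paper explicitly does not prove it, and the authors themselves preface it with ``arguing again by contradiction and in a non-rigorous way'' and ``even if a full proof will be complicated, one can imagine that the eigenvalues can be continued analytically.'' Your sketch follows essentially the same heuristic route the authors indicate: perturb the multipliers of $N_\ep$ by the $O(|\ep|)$ term $A'_0(U_{\ep,0})$ in \eqref{Odefined}, use Kato-style analytic perturbation theory to locate a resonance of $\O_\ep$ at distance $o(|\ep_0|^2)$ from each resonance $\ep_0$ of $N_\ep$, and then invoke Theorem~\ref{thm:optimal} there. So there is no ``paper proof'' to diverge from; the comparison is between your sketch and the authors' admitted heuristic, and they coincide in structure.

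The gap you flag in your last paragraph is real and is exactly the one the authors acknowledge: Theorem~\ref{thm:optimal} is proved by contradiction, so it only yields the \emph{existence} of arbitrarily small perturbations without an analytic continuation through a given resonance; as the paper notes after the proof, once the hypothesis fails one can no longer extract the infinite-codimension (hence open-dense, hence Baire-able) statement that your argument needs, and needs \emph{uniformly} over the countably many resonances $\tilde\ep_0$. Until that is supplied, the argument establishes nothing generic, which is why the statement remains a conjecture. Two smaller points: (i) your quantitative step assumes $|\omega\cdot k|\sim|\ep_0|^{-1}$ at a small resonance, but the zeros of $\lambda_{n,k}(\ep)=-(2\pi\omega\cdot k)^2+2\pi i(\omega\cdot k)/\ep+\lambda_n$ also accumulate at $0$ along the branch where $\omega\cdot k$ is a small divisor or $\lambda_n\to\infty$ with $\ep_0\approx -2\pi i(\omega\cdot k)/\lambda_n$, for which $|\partial_\ep\lambda_{n,k}(\ep_0)|\sim\lambda_n/|\ep_0|$ and the displacement is only $O(|\ep_0|^2/\lambda_n)$ --- still $o(|\ep_0|^2)$ but borderline, and this branch must be treated; (ii) on the branch you do treat, $|\partial_\ep\lambda_{n,k}(\ep_0)|=2\pi|\omega\cdot k|/|\ep_0|^2\sim|\ep_0|^{-3}$, not $|\ep_0|^{-1}$, so the displacement is $O(|\ep_0|^4)$ rather than $O(|\ep_0|^3)$ --- harmless for the conclusion, but the stated derivative bound is off by two powers of $|\ep_0|$.
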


\section*{Acknowledgements}
Part of this work was was carried out
while R.C. was a Visiting Assistant Professor
at the School of Mathematics in the Georgia Institute of Technology. R.C.
would also like to thank G. Flores for useful discussions
and for sharing a preprint
version of \cite{Flores13}.
A.C. is grateful to S. Terracini for several discussions.

\appendix

\section{Some properties of $\A_{\rho,j, m}$}\label{app.spaces}

\subsection{Characterization of the norm in terms of the Fourier coefficients}
Here we provide a norm equivalent to
\eqref{analyticnorm} which can be expressed in terms of the Fourier coefficients.  In  this way, it is
 easy to study the boundedness of operators which
are diagonal in the Fourier basis (products of
complex exponentials in $\theta$ and eigenfunctions of
$\Lop$ in $x$).
As before for two equivalent norms $\|\cdot\|$, $\|\cdot\|'$,
we will write $\|\cdot\|\cong\|\cdot\|'$.

\begin{proposition}
Let $u \in \A_{\rho,j,m}$ have a Fourier expansion as in
\eqref{generalizedFourier}. We have:
\beqano
\begin{split}
\|u\|_{\rho,j, m}  & \cong
\left( \sum_{k \in \integer^d\backslash\{0\}} \frac{e^{4 \pi |k| \rho}}{B(k,\rho)} ((2\pi)^d\ |k|^2 + 1)^{j} \,
\| \hat u_k\|_{H^m_\Lop}^2  +
{{\|\hat u_0\|_{H_\Lop^m}^2}\over {B(0,\rho)}}\right)^{1/2}\\
& =\left( \sum_{k \in \integer^d\backslash\{0\}, n \in \nat}
\frac{e^{4 \pi |k| \rho}}{B(k,\rho)}((2\pi)^d\ |k|^2 + 1)^{j} \lambda_n^m
 |\hat u_{k,n}|^2  +
{{\lambda_n^m}\over {B(0,\rho)}}|\hat u_{0,n}|^2\right)^{1/2}\ ,\\
\end{split}
\eeqano
where for $k \in \integer^d$ we denote $|k| \equiv |k_1| + \cdots +|k_d|$
and
\[
B(k,\rho) \equiv\prod_{j=1}^d a(k_j,\rho)\ ,\qquad
a(j,\rho) \equiv \left\{ \begin{array}{ll}
4 \pi |j| & \textrm{if }\, j\neq 0\\
{1 \over {4\pi \rho}} & \textrm{if }\, j = 0\\
\end{array}\right. .
\]
\end{proposition}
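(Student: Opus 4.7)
The plan is to expand $u$ in the orthonormal basis $\{e^{2\pi i k\cdot\theta}\Phi_n(x)\}_{k\in\zed^d,\,n\geq 1}$ of the relevant Hilbert space, which is a joint eigenbasis for the operators appearing in \eqref{analyticnorm}: the basis diagonalizes $\Lop$ in $x$ with eigenvalue $\lambda_n$, and each Fourier mode $e^{2\pi i k\cdot\theta}$ is an eigenfunction of $\Delta_\theta+1$ with a Fourier multiplier depending only on $k$ (of order $|k|^2+1$). Having this simultaneous diagonalization, the two-sided inequality in the statement should fall out from Parseval in the real part of $\theta$ together with an explicit computation of the one-dimensional integrals $\int_{-\rho}^\rho e^{-4\pi k_j\eta}\,d\eta$ along the imaginary directions.

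First, I would apply $(\Delta_\theta+1)^{j/2}$ term by term in the expansion \eqref{generalizedFourier}; each term $e^{2\pi i k\cdot\theta}\Phi_n(x)\hat u_{k,n}$ is multiplied by the symbol of $(\Delta_\theta+1)^{j/2}$ at frequency $k$, a scalar that coincides, up to a dimensional constant, with the weight $((2\pi)^d|k|^2+1)^{j/2}$ in the statement. I would likewise use the definition \eqref{SobolevBC} of $\|\cdot\|_{H^m_\Lop}$ to conclude $\|\Phi_n\|_{H^m_\Lop}^2=\lambda_n^m$. The exchange of the integral over $\torus^d_\rho$ with the double sum over $(k,n)$ is justified by the fact that $u\in\A_{\rho,j,m}$ has Fourier coefficients with the appropriate exponential decay on every strictly smaller strip, so the series converges uniformly on every compact subset of $\torus^d_\rho$.

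Second, writing $\theta=\theta^R+i\theta^I$ with $\theta^R\in[0,1]^d$ and $\theta^I\in[-\rho,\rho]^d$, I would perform the $\theta^R$-integration first; this is a standard Fourier orthogonality argument on $\torus^d$ and collapses the double sum $\sum_{k,k'}$ to a single diagonal sum. What remains is
\[
\|u\|_{\rho,j,m}^2 \;\cong\; \sum_{k,n}\bigl((2\pi)^d|k|^2+1\bigr)^j\,\lambda_n^m\,|\hat u_{k,n}|^2\,I(k,\rho)\,,\qquad
I(k,\rho)\;\equiv\;\prod_{j=1}^d\int_{-\rho}^{\rho}e^{-4\pi k_j\eta}\,d\eta.
\]
For $k_j\neq 0$ the one-dimensional integral equals $\sinh(4\pi|k_j|\rho)/(2\pi|k_j|)$, which is equivalent to $e^{4\pi|k_j|\rho}/(4\pi|k_j|)=e^{4\pi|k_j|\rho}/a(k_j,\rho)$ with two-sided constants independent of $k_j$; for $k_j=0$ it equals $2\rho$, equivalent to $1/a(0,\rho)$. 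Multiplying the one-dimensional factors yields $I(k,\rho)\cong e^{4\pi|k|\rho}/B(k,\rho)$, and plugging back in produces the two formulas in the statement (the $k=0$ contribution being collected separately).

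The steps are soft; the only point requiring care is the uniformity of the equivalence $\sinh(4\pi|k_j|\rho)\cong e^{4\pi|k_j|\rho}$ across all nonzero integers $k_j$. For $|k_j|\rho$ bounded away from $0$ this is routine ($\sinh$ and $e^{x}/2$ differ by at most a factor $1+e^{-2x}$), while for the (finitely many) small values where $|k_j|\rho$ is order one one interpolates with the corresponding $k_j=0$ asymptotics $\sim 2\rho$ versus $1/a(0,\rho)=4\pi\rho$; in all regimes the multiplicative constants can be chosen to depend only on $\rho$ and $d$. Modulo this elementary bookkeeping, the equivalences in the statement follow from Parseval and the explicit integrals, with no further assumptions on $u$ beyond membership in $\A_{\rho,j,m}$.
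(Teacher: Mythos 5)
Your proof is correct and follows essentially the same route as the paper's: diagonalize $(\Delta_\theta+1)^{j/2}$ and $\Lop^{m/2}$ on the product basis, use orthogonality of the exponentials, and compute $\int_{|\Im\theta_j|\le\rho}e^{-4\pi k_j\Im\theta_j}\,d(\Im\theta_j)$ explicitly, observing that $(e^{4\pi|k_j|\rho}-e^{-4\pi|k_j|\rho})/(4\pi|k_j|)\cong e^{4\pi|k_j|\rho}/a(k_j,\rho)$ with constants uniform in $k_j$ since $|k_j|\ge 1$. The extra care you take about uniformity of the $\sinh$ versus exponential comparison and about interchanging sum and integral is harmless but matches what the paper does with its explicit constants $C_-=1-e^{-4\pi\rho}$ and $C_+=2$.
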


\begin{proof}
For $d=1$ we have that for $k \ne 0$:
$$
\int_{\torus_\rho} | e^{2 \pi i k\theta}|^2 d^2 \theta =
\int_{|\Im(\theta)|\leq \rho} \ e^{-4 \pi k \Im \theta}\ d (\Im(\theta))
= \frac{e^{4 \pi |k| \rho}-e^{-4 \pi |k| \rho}}{ a(k,\rho)}\ .
$$
Of course, when $k = 0$, the integral is just $4\pi\rho$.
For any $k\neq0$ we have that the integral can be bounded as
\beq{bound}
C_-\ \frac{e^{4 \pi |k| \rho}}{ a(k,\rho)}\leq \int_{\torus_\rho} | e^{2 \pi i k\theta}|^2 d^2 \theta
\leq C_+\ \frac{e^{4 \pi |k| \rho}}{ a(k,\rho)}
\eeq
with $C_-\equiv 1-e^{-4\pi\rho}$ and $C_+\equiv 2$. For $d\geq 2$ we can use Fubini's theorem, applying \equ{bound}
to each factor and obtaining the following inequalities:
$$
C_-^d\ {e^{4 \pi |k| \rho}\over {B(k,\rho)}}\leq \int_{\torus^d_\rho} | e^{2 \pi i k\cdot \theta}|^2 d^{2d} \theta
\leq C_+^d\ {e^{4 \pi |k| \rho}\over {B(k,\rho)}}\ .
$$
Finally we note that the  Laplacian is diagonal in the exponentials, so that
\[
|(\Delta_\theta + 1)^{j/2} e^{2 \pi i k\cdot \theta}|^2
= ((2 \pi)^d \, |k|_2^2 + 1)^j  |e^{2 \pi i k\cdot \theta}|^2\ ,
\]
where $|k|_2$ denotes the Euclidean norm.

Since the exponentials are orthogonal with respect to the $L^2$ inner product, we obtain
\[
\int_{\torus^d_\rho} \|(\Delta_\theta + 1)^{j\over 2}  u(\theta, \cdot)\|_{H_\Lop^m}^2\ d^{2d} \theta
\cong \sum_{k\in\integer^d\backslash\{0\}} \frac{e^{4 \pi |k| \rho}}{B(k,\rho)} ((2 \pi)^d \, |k|_2^2 + 1)^j  \|\hat u_k\|_{H_\Lop^m}^2+
{{\|\hat u_0\|_{H_\Lop^m}^2}\over {B(0,\rho)}}\ .
\]
This concludes the proof.
\end{proof}

\begin{remark}
Since the Euclidean norm $|k|_2$ in $d$ dimensions is equivalent to the $\ell^1$-norm $|k|$,
 we can substitute  $|k|_2$ for $|k|$ in the polynomial factor
and  obtain an equivalent norm in the Banach space. On the other hand, if
we change the $\ell^1$-norm of $|k|$
in the argument of the exponential by the $\ell^2$-norm, we obtain
a non-equivalent norm in the space of analytic functions.
\end{remark}

The space  $\A_{\rho, j, m}$ is a closed subspace of the
Sobolev space of maps from the $2d$
dimensional manifold $\torus_\rho^d$
into the Banach algebra $H_\Lop^m$.
Indeed for $m>\ell/2$ (i.e., in the case considered in this paper), we can (i) apply Sobolev embedding theorem,
(ii) obtain Banach algebra properties under multiplication,
(iii) apply Gagliardo-Nirenberg-Moser inequalities for composition.

In particular, since $\torus^d_\rho$ is a $2d$-dimensional real manifold, the Sobolev embedding theorem implies
\[\|u\|_{L^{\infty}(\torus^d_\rho; H_\Lop^m)} \leq C \|u\|_{H^{j}(\torus^d_\rho;H_\Lop^m)}\ ,\]
whenever $j>d$ for some constant $C>0$. Hence, the convergence in $H^{j}(\torus_\rho^d ; H_\Lop^m)$ implies the
uniform convergence and it is well known that uniform limits
of analytic functions are analytic.

Summarizing we have the following properties.
\begin{proposition}\label{properties}
Let $\rho>0$, $j,m \in 2 \nat$.
The space $\A_{\rho, j, m}$ of functions analytic in $\theta$, endowed with the norm given in
\equ{analyticnorm} is a Banach algebra
under multiplication, when $j > d$, $m > \ell/2$.
\end{proposition}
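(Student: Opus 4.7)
The plan is to reduce the statement to two classical facts about Sobolev spaces: that $H^m(\D)$ is a Banach algebra for $m > \ell/2$, and that $H^j(M;X)$ is a Banach algebra whenever $X$ is a Banach algebra and $j$ exceeds half the real dimension of $M$. Here $M = \torus^d_\rho$, which is a real $2d$-dimensional manifold with boundary, so the threshold is $j > d$, exactly the hypothesis. Analyticity in $\theta$ is an additional qualitative property that is automatically preserved by multiplication, so the main content of the proof is the quantitative bound $\|uv\|_{\rho,j,m} \le C \|u\|_{\rho,j,m}\|v\|_{\rho,j,m}$.

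First I would settle the pointwise (in $\theta$) estimate. By the equivalence $\|\cdot\|_{H^m_\Lop}\cong\|\cdot\|_{H^m}$ recalled in \equ{equivalenceS} and the standard Banach algebra property of $H^m(\D)$ for $m>\ell/2$ (and $m>\ell/2+1$ in the Neumann case, to keep track of boundary-condition compatibility via the classical trace interpretation mentioned in Section~\ref{sec:spaces}), one has
\[
\|u(\theta,\cdot)\,v(\theta,\cdot)\|_{H^m_\Lop} \le C_0\,\|u(\theta,\cdot)\|_{H^m_\Lop}\,\|v(\theta,\cdot)\|_{H^m_\Lop}
\qquad \text{for a.e. } \theta\in\torus^d_\rho.
\]
This shows that $H^m_\Lop$ is the appropriate scalar-valued Banach algebra, so that $u(\theta,\cdot)\,v(\theta,\cdot)$ is well-defined in $H^m_\Lop$ for each $\theta$.

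Next I would handle derivatives in $\theta$. Because the norm \equ{analyticnorm} is, up to equivalence, the $H^j(\torus^d_\rho;H^m_\Lop)$ norm, it suffices to control $\|D^\alpha_\theta(uv)\|_{L^2(\torus^d_\rho;H^m_\Lop)}$ for multi-indices $|\alpha|\le j$. Applying the Leibniz rule in $\theta$ and the pointwise algebra estimate above,
\[
\|D^\alpha_\theta(uv)\|_{H^m_\Lop} \le C_0\sum_{\beta\le\alpha}\binom{\alpha}{\beta}\|D^\beta_\theta u\|_{H^m_\Lop}\,\|D^{\alpha-\beta}_\theta v\|_{H^m_\Lop}.
\]
For each term, distribute the derivatives in the standard Gagliardo--Nirenberg--Moser fashion: put the factor with fewer $\theta$-derivatives in $L^\infty_\theta(H^m_\Lop)$ and the other in $L^2_\theta(H^m_\Lop)$. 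The key input here is the Sobolev embedding
\[
H^j(\torus^d_\rho;H^m_\Lop) \hookrightarrow L^\infty(\torus^d_\rho;H^m_\Lop),\qquad j>d,
\]
which is exactly the threshold coming from $\dim_{\real}(\torus^d_\rho)=2d$. Summing the finitely many terms yields the bilinear bound $\|uv\|_{\rho,j,m} \le C\|u\|_{\rho,j,m}\|v\|_{\rho,j,m}$ with $C$ depending only on $\rho,j,m,d,\ell$.

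The hard part, if any, is keeping track of the boundary conditions: the product $uv$ must lie in the same $H^m_\Lop$ (with the same boundary conditions) as the factors. For periodic and Dirichlet conditions this is immediate, since $u(\theta,x)v(\theta,x)$ vanishes on $\partial\D$ as soon as one factor does. For the Neumann case one needs the extra regularity $m>\ell/2+1$ (present in our hypotheses) so that $D_x u$ is continuous up to $\partial\D$ and the Leibniz rule $n(x)\cdot D_x(uv)=u\,n(x)\cdot D_x v+v\,n(x)\cdot D_x u$ gives $n(x)\cdot D_x(uv)|_{\partial\D}=0$. Finally, analyticity of $uv$ as a function of $\theta\in\torus^d_\rho$ is preserved because products of analytic Banach-space-valued functions are analytic (term-by-term multiplication of their power series converges in $H^m_\Lop$), so $uv\in\A_{\rho,j,m}$. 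Completeness of $\A_{\rho,j,m}$ was already noted after \equ{analyticnorm}, hence it is a Banach algebra.
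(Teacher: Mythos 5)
Your proposal is correct and follows essentially the same route as the paper, which likewise views $\A_{\rho,j,m}$ as a space of maps from the $2d$-dimensional manifold $\torus^d_\rho$ into the Banach algebra $H^m_\Lop$ (an algebra for $m>\ell/2$ by G\aa rding's inequality and the classical Sobolev algebra property) and then invokes the embedding $H^j(\torus^d_\rho;H^m_\Lop)\hookrightarrow L^\infty(\torus^d_\rho;H^m_\Lop)$ for $j>d$ together with Gagliardo--Nirenberg--Moser estimates and preservation of analyticity under uniform limits. Your write-up is in fact more detailed than the paper's (which is a short discussion preceding the statement); the only caution is that in the Leibniz step one cannot literally place the factor with fewer $\theta$-derivatives in $L^\infty_\theta$ when both multi-indices are large --- one needs the genuine GNM interpolation inequalities you already cite.
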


\begin{proposition}\label{multiplier}
If we have a linear  operator $\M$  which is diagonal in the Fourier basis, say
$$
\M(e^{ 2 \pi i k \cdot \theta} \Phi_n(x)) =
\lambda_{n,k}\ e^{ 2 \pi i k \cdot \theta} \Phi_n(x)\ ,
$$
for suitable coefficients $\lambda_{n,k}$, then we have
\beq{eqnorm}
\| \M \|_{\rho, j, m} \le C \sup_{n,k} |\lambda_{n,k} |\ .
\eeq
\end{proposition}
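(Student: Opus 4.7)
The plan is to exploit the Fourier-coefficient characterization of the $\|\cdot\|_{\rho,j,m}$-norm just established in the preceding proposition, so that the operator-norm bound reduces to the trivial observation that a diagonal operator on a weighted $\ell^2$ space has norm bounded by the supremum of its multipliers. First I would take an arbitrary $u\in \A_{\rho,j,m}$ and expand it as in \equ{generalizedFourier},
\[
u(\theta,x) = \sum_{k\in\integer^d,\, n\ge 1} e^{2\pi i k\cdot\theta}\,\Phi_n(x)\,\hat u_{k,n},
\]
and observe that, since $\M$ is diagonal in the basis $\{e^{2\pi i k\cdot\theta}\Phi_n(x)\}$, the generalized Fourier coefficients of $\M u$ are simply $\lambda_{n,k}\hat u_{k,n}$.

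Next I would apply the Fourier-sum formula of the preceding proposition to $\M u$, obtaining
\[
\|\M u\|_{\rho,j,m}^{2} \;\le\; C_1 \sum_{k,n} \frac{e^{4\pi|k|\rho}}{B(k,\rho)}\bigl((2\pi)^d|k|^{2}+1\bigr)^{j}\,\lambda_n^{m}\,|\lambda_{n,k}|^{2}\,|\hat u_{k,n}|^{2},
\]
where $C_1>0$ is the upper-equivalence constant from that proposition. Bounding $|\lambda_{n,k}|^{2}$ uniformly by $(\sup_{k,n}|\lambda_{n,k}|)^{2}$ pulls this supremum outside the sum, and the lower-equivalence half of the norm characterization identifies the remaining weighted $\ell^{2}$ sum with a constant multiple of $\|u\|_{\rho,j,m}^{2}$. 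Taking the quotient of the two equivalence constants yields the constant $C$ appearing in \equ{eqnorm}.

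Finally, I would briefly justify the passage from finite truncations to the full space: the calculation above is straightforward on trigonometric polynomials times finite eigenfunction expansions, which are dense in $\A_{\rho,j,m}$; the uniform bound just derived shows that $\M$ extends to a bounded operator on the completion, and this extension must coincide with the diagonal action on each basis element and hence on all of $\A_{\rho,j,m}$. There is no real obstacle here, and this proposition will be used repeatedly in the sequel to bound $N_\varepsilon^{-1}$ and similar resolvent-type operators that are diagonal in the generalized Fourier basis.
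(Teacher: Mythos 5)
Your argument is correct and is exactly the paper's (the paper simply declares the bound ``immediate'' since $\M$ is diagonal and the norm is expressed through the Fourier coefficients): you pass to the equivalent weighted-$\ell^2$ norm, pull out $\sup_{n,k}|\lambda_{n,k}|$, and absorb the two equivalence constants into $C$. The closing density/extension remark is a harmless extra that the paper omits.
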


The bound \eqref{eqnorm} is immediate since the operator $\M$ is diagonal in the Fourier
basis and the norm is just the sum of the Fourier coefficients.

\begin{remark}\label{rem:cont}
We can also dispense with the assumption that $\Lop$ has a
discrete spectrum, provided we assume it is self-adjoint.

In general, if $\Lop$ is self-adjoint in $L^2_{BC}(\D)$, the spectral
theorem for self-adjoint operators in separable
Hilbert spaces (\cite{Dunford, Helmberg, Reed-Simon-Vol1, VonNeumann})
shows that there is a positive  measure $\mu$ defined on the reals
and a unitary operator $V$
from $L^2_{BC} (\D)$ to $L^2(M, \mu)$, where
$M$ is a measure space, such that
\[
[V \Lop V^{-1}  f] (x)  = F(x)  f(x)
\]
for some multiplication operator $F$. We recall that the role of $M$ is to account for the multiplicity.  Very often $M$ is just
copies of $\real$, the number of copies being the maximum multiplicity. The multiplication operator, then, is just
a multiplication by the coordinate $x$.
Of course, the unitary mapping and the measure $\mu$ depend heavily on
$\Lop$, even if we do not include it in the notation.

The above relation means that the unitary transformation sends the operator $\Lop$ into
a multiplication by $F(x)$. This allows us to define the generalized Sobolev norm as
\[
|| f ||_{H^m, \Lop}^2 = \int_\real F(x)^{2m} |Vf(x)|^2 \, d \mu\ .
\]

This is quite analogous to the characterization of Sobolev
spaces in terms of the Fourier series developed in \cite{TaylorII}.
Again, by G\"arding's inequality, these spaces are equivalent to
the standard Sobolev spaces in the variable $x$ and we can define
$\A_{\rho, j, m}$ as the spaces of analytic functions from
$\torus^d_\rho$ to $H^m_\Lop$.

We can also lift the spectral theorem to Fourier series.
Indeed, we can consider the space
\[
\ell^2_{BC} (\torus^d \times \D) = L^2(\torus^d) \otimes  L^2_{BC}(\D)\ .
\]
Note that the Fourier transform $\F$ (if appropriately normalized) is
a unitary operator from $L^2(\torus^d)$ to
$\ell^2(\mathbb{Z})  =  L^2( \real, \sum_n \delta_n)$.
Hence we can consider the operator
\[
\tilde V  \equiv \F \otimes V:  L^2(\torus^d) \otimes  L^2_{BC}(\D)
\rightarrow
L^2(\real \times \real, \Big( \sum_n \delta_n\Big)  \otimes d\mu)\ .
\]
The analogue of Proposition~\ref{multiplier} is given by the following result.

\begin{proposition} \label{multiplier2}
If we have a linear  operator $\M$  such that
$$
(\tilde V \M \tilde V^{-1} f)(k, x) = \lambda(k,x)  f(k, x)\ , $$
then, we obtain
\def\supp{ {\rm support}}
\[
\| \M \|_{\rho, j, m} \le C \sup_{k \in \integer, x \in \supp(d \mu)} | \lambda(k,x)|
\]
for a positive constant $C$.
\end{proposition}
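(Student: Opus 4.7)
The plan is to reduce Proposition~\ref{multiplier2} to the same bookkeeping used for Proposition~\ref{multiplier}, the only new ingredient being that the discrete spectral sum over $n$ is replaced by integration against the spectral measure $\mu$ produced by the spectral theorem. The first step is to upgrade the characterization of $\|\cdot\|_{\rho,j,m}$ in terms of Fourier coefficients (the first Proposition of Appendix~\ref{app.spaces}) to one in terms of the joint spectral representation $\tilde V = \F \otimes V$. Since $V$ acts only in the $x$ variable, it commutes with $\Delta_\theta+1$ and with integration over $\torus_\rho^d$; combining this with the definition $\|f\|_{H^m_\Lop}^2 = \int_M F(x)^{2m}|Vf(x)|^2\,d\mu$, one obtains the equivalent norm
$$
\|u\|_{\rho,j,m}^2 \cong \sum_{k \in \integer^d\setminus\{0\}} \frac{e^{4\pi|k|\rho}}{B(k,\rho)}\,\bigl((2\pi)^d|k|^2+1\bigr)^{j}\!\int_{M} F(x)^{2m}|(\tilde V u)(k,x)|^2\,d\mu \;+\; \frac{1}{B(0,\rho)}\!\int_{M} F(x)^{2m}|(\tilde V u)(0,x)|^2\,d\mu .
$$
This is the continuous-spectrum analogue of the formula in the first Proposition of the Appendix and is proved by exactly the same computation: expand $u$ in Fourier series in $\theta$, use the bound \equ{bound} on each $\int_{\torus^d_\rho} |e^{2\pi i k\cdot\theta}|^2\,d^{2d}\theta$, and apply $V$ in the $x$ variable term by term.

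The second step is to observe that, by hypothesis, $\tilde V \M \tilde V^{-1}$ is just multiplication by $\lambda(k,x)$ on $L^2(\real\times\real,(\sum_n\delta_n)\otimes d\mu)$. Applying the above norm formula to $\M u$ in place of $u$ yields
$$
\|\M u\|_{\rho,j,m}^2 \cong \sum_{k\neq 0}\frac{e^{4\pi|k|\rho}}{B(k,\rho)}\bigl((2\pi)^d|k|^2+1\bigr)^{j}\!\int_{M}F(x)^{2m}|\lambda(k,x)|^2|(\tilde V u)(k,x)|^2\,d\mu + \tfrac{1}{B(0,\rho)}\!\int_{M}F(x)^{2m}|\lambda(0,x)|^2|(\tilde V u)(0,x)|^2 d\mu.
$$
Setting $\Lambda \equiv \sup_{k\in\integer^d,\,x\in\supp(d\mu)}|\lambda(k,x)|$, we pull the pointwise bound $|\lambda(k,x)|^2\le \Lambda^2$ out of each integral and compare with the formula for $\|u\|_{\rho,j,m}^2$ to conclude $\|\M u\|_{\rho,j,m}^2 \le C^2 \Lambda^2 \|u\|_{\rho,j,m}^2$, which is the desired estimate with the constant $C$ absorbing the equivalence constants between the two norms.

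The main obstacle is the passage from the norm in \equ{analyticnorm} to the spectral-representation norm. Two points need a brief justification: first, that $(\Delta_\theta+1)^{j/2}$ commutes with $\tilde V$ (immediate, since $V$ acts only in $x$ and $\F$ diagonalises $\Delta_\theta$); and second, that the G\aa rding-type equivalence $\|\cdot\|_{H^m_\Lop}\cong\|\cdot\|_{H^m}$ continues to hold when $\Lop$ has continuous spectrum, so that $\A_{\rho,j,m}$ is still the appropriate space. Both points are standard consequences of the self-adjointness of $\Lop$ and the tensor structure $\tilde V = \F\otimes V$. Once these identifications are in place, the rest of the argument is purely algebraic, exactly as in the proof of Proposition~\ref{multiplier}, and the discrete sum over $n$ is simply replaced by $\int d\mu$.
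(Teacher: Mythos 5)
Your proof is correct and follows essentially the same route the paper intends: the paper states Proposition~\ref{multiplier2} without a separate proof, treating it as immediate by the same argument as Proposition~\ref{multiplier} (the norm is computed in the representation where $\M$ acts as a multiplier, so the operator norm is bounded by the supremum of the multiplier), which is exactly what you spell out with the discrete sum over $n$ replaced by integration against $d\mu$.
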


It is important to note that the assumptions that we used in
Section~\ref{sec:bounds},
giving uniform bounds on the operator
$N_\varepsilon$, only assumed that the spectrum lied in the real line
and they were uniform for all $\lambda$ real.

The main applications of the case of non-discrete spectrum appear when $\D$ is an unbounded domain of
$\real^d$ or another unbounded  manifold. In that case, one has to take care of
the fact that the Sobolev embeddings are different and the global
regularity theory for elliptic equations may be different.
\end{remark}

\subsection{Analytic functions from a Banach space to another}
\label{sec:analytic}

Now we present some analyticity properties of nonlinear functions in
Banach spaces (the literature on this subject is very wide, see for
example \cite{HilleP, Mujica}).  In analogy with the finite dimensional
case, there are results which show that some weak definitions
such as differentiability  imply stronger ones (convergence of
Taylor series around every point). In the infinite dimensional cases
the results are more subtle since there are
different notions of differentiability and different notions of
convergence of power series, but it is true that extremely weak notions
turn out to be equivalent to the strongest one (\cite[Chapter III]{HilleP}).

For our purposes, we only need to apply an easy implicit function theorem and to
study the analyticity properties of the nonlinear operator $G$ defined in
\eqref{nonlinearG}. The Banach spaces in which $G$ acts are
the spaces defined in Section~\ref{sec:spaces-findim-Banach}.

The following definition of analytic functions will be enough for us.

\begin{definition}\label{def:analytic}
Let $X$ and $Y$ be complex Banach spaces.
We say that $f:\Omega \subset X \to Y$ is analytic if it is
uniformly differentiable
at all points of $\Omega$, namely the derivative is uniformly bounded and
there exists a function $\gamma=\gamma(|z|)$, with $\gamma(|z|)\to0$ as $|z|\to 0$,
such that the following uniform bound holds:
\begin{equation}\label{Gateaux}
\|f(x + z \zeta) - f(x) - z Df(x)\zeta\| \leq \gamma( |z|)\
\end{equation}
for all $x \in \Omega$, $\zeta \in X$ with $|\zeta| = 1$.
\end{definition}

\begin{remark}
It is clear that Definition~\ref{def:analytic}   is a rather weak notion of
differentiability. However, it is a remarkable
fact (see \cite{HilleP}, Theorem 3.17.1) that Definition~\ref{def:analytic}
is equivalent to requiring that the function $f$ has a Taylor expansion of the form
$$
f(x+\zeta) = \sum_{\alpha} {1 \over \alpha!}\ \partial^\alpha f(x) \zeta^\alpha\ ,
$$
that converges uniformly for $\| \zeta\|\leq M$ for some $M>0$
(indeed, $\sum_\alpha \| {1 \over \alpha!} \partial^\alpha f(x)\|M^{|\alpha|} < \infty$).

It is also true that even weaker
notions of differentiability imply \eqref{Gateaux}. For
our purposes, Definition~\ref{def:analytic} will be enough, since it allows us
to apply a contraction mapping argument.
We refer to \cite[Chapter III]{HilleP}, \cite{Mujica,Herve} for
other definitions of analyticity that turn out to be equivalent.
\end{remark}

To make sense of the composition, we need an analytic extension of the non-linearity $h$ to the complex plane
with respect to its first argument.

\begin{proposition}\label{prop:composition}
Let $h:\mathcal{B} \times \overline{\D} \to \complex$ with $\mathcal{B}$ an open set
in $\complex$ and assume that $h$ is analytic in  $u \in \mathcal{B}$ and it is
$C^m(\D)\cap C(\overline{\D})$.

Assume that $\partial_u ^{\alpha_1}  \partial_x^{\alpha_2} h(u, x)$
are bounded in $\mathcal{B} \times \D$
for $\alpha_1 + \alpha_2 \le m$.
We denote by $\partial_u$ the complex derivatives
and we assume that $m > \ell/2$.

Let $u_0 \in \A_{\rho, j, m}$ be such that $\dist(u_0(\torus^d_\rho, \overline{\D}), \complex  \setminus \mathcal{B})>a_0$
with $a_0>0$.

Then, for all $u$ in a neighborhood $\mathcal{U}$ of $u_0$ in
$\A_{\rho, j, m}$, we can define the operator
$$
\C_h[u](\theta,x) = h(u(\theta,x),x)
$$
from $\A_{\rho, j, m}$ to itself, which is an analytic operator in the sense of
Definition~\ref{def:analytic}.

Moreover, for $v\in\A_{\rho, j, m}$ the derivative of $\C_h$
is given by
$$
(D\C_h[u] v)(\theta,x) =
h'( u(\theta,x), x)  v(\theta,x)\ ,
$$
where by $h'$ we denote the complex derivative of $h$
with respect to its first argument, namely $h'(u, x) = (\partial_1 h)( u, x)$ (in the proof $h''$ will denote the second
derivative of $h$ with respect to its first argument).
\end{proposition}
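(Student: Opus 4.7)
The plan is to exploit three main ingredients: the Sobolev embedding $\A_{\rho,j,m} \hookrightarrow L^\infty(\torus^d_\rho \times \overline{\D})$ (which follows from $j>d$, $m>\ell/2$), the Banach algebra property of $\A_{\rho,j,m}$ given in Proposition \ref{properties}, and the analyticity of $h$ in its first variable, exploited via the complex Taylor expansion. First I would fix the neighborhood $\mathcal{U}$: the Sobolev embedding gives $\|u-u_0\|_{L^\infty}\le C\|u-u_0\|_{\rho,j,m}$, so choosing $\mathcal{U}$ of radius less than $a_0/(2C)$ guarantees $\dist(u(\torus^d_\rho\times \overline{\D}),\complex\setminus \mathcal{B})> a_0/2$ for all $u\in\mathcal{U}$. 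This makes $\C_h[u]$ pointwise well-defined and ensures that, by Cauchy estimates, $\partial_u^{\alpha_1}\partial_x^{\alpha_2} h(u(\theta,x),x)$ is uniformly bounded for all $\alpha_1+\alpha_2\le m$, with the bound depending only on $a_0$, $\|h\|_{C^m}$ and a compact neighborhood of the range of $u_0$.

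Next I would show $\C_h[u]\in\A_{\rho,j,m}$. By the remark after \eqref{analyticnorm} and G\aa rding's inequality, the norm $\|\cdot\|_{\rho,j,m}$ is equivalent to $\|\cdot\|_{H^j(\torus^d_\rho;H^m(\overline{\D}))}$, so it suffices to estimate mixed derivatives in $(\theta,x)$ up to total order $j+m$ in $L^2$. The Faà di Bruno formula writes each such derivative of $h(u(\theta,x),x)$ as a finite sum of products
\beqno
(\partial_u^{\alpha_1}\partial_x^{\alpha_2} h)(u(\theta,x),x)\cdot\prod_{i}\partial^{\beta_i}_{(\theta,x)} u(\theta,x)\ ,
\eeqno
with $\sum_i|\beta_i|\le j+m$, where at most one factor has $|\beta_i|=j+m$ and the remaining factors are controlled in $L^\infty$ by Sobolev embedding. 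Coupled with the Banach algebra property and the $L^\infty$ bound from the first paragraph, this yields $\|\C_h[u]\|_{\rho,j,m}\le K(\|u\|_{\rho,j,m})$ for some continuous function $K$, uniformly on $\mathcal{U}$. The same estimate applied to $\partial_u^{\alpha_1}\partial_x^{\alpha_2}h$ (which satisfies the same hypotheses as $h$) shows that $(\partial_u^{\alpha_1}\partial_x^{\alpha_2}h)(u(\cdot),\cdot)\in \A_{\rho,j,m}$, which is the input we need for the analyticity step.

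For analyticity in the sense of Definition \ref{def:analytic}, I would apply Taylor's theorem with integral remainder in the first variable of $h$:
\beqno
h(w+\tau,x)=h(w,x)+h'(w,x)\tau+\tau^2\int_0^1(1-s)\,h''(w+s\tau,x)\,ds\ ,
\eeqno
setting $w=u(\theta,x)$ and $\tau=z\zeta(\theta,x)$. For $\|\zeta\|_{\rho,j,m}=1$ and $|z|$ small enough, the Sobolev embedding keeps $w+s\tau$ inside $\mathcal{B}$ for all $s\in[0,1]$ and all $(\theta,x)$, so the remainder $R(\theta,x)\equiv\int_0^1(1-s)h''(u(\theta,x)+sz\zeta(\theta,x),x)\,ds$ lies in $\A_{\rho,j,m}$ with norm bounded uniformly in $z$ (by the argument of the previous paragraph, applied to the analytic function $(u,s)\mapsto h''(u+s z\zeta,x)$, whose coefficients are controlled by $a_0$). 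Multiplying by $z^2\zeta^2$ and using the Banach algebra property produces
\beqno
\|\C_h[u+z\zeta]-\C_h[u]-z\,D\C_h[u]\zeta\|_{\rho,j,m}\le C|z|^2\|\zeta\|_{\rho,j,m}^2\ ,
\eeqno
which is \eqref{Gateaux} with $\gamma(|z|)=C|z|^2$. Inspection of the linear term identifies the derivative $(D\C_h[u]v)(\theta,x)=h'(u(\theta,x),x)\,v(\theta,x)$.

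The main obstacle I anticipate is the rigorous control of the Faà di Bruno expansion in the hybrid analytic/Sobolev norm $\|\cdot\|_{\rho,j,m}$: one must verify that products of functions with analytic regularity in $\theta$ (reflected in exponentially weighted Fourier modes) and Sobolev regularity in $x$ (encoded through the eigenfunction expansion of $\Lop$) close under multiplication with quantitative bounds. This is precisely where Proposition \ref{properties} is used, and where the restrictions $j>d$ and $m>\ell/2$ enter crucially; once the composition operator is shown to preserve $\A_{\rho,j,m}$ with a controlled norm, the differentiability statement reduces to the Taylor remainder estimate above, which is essentially routine.
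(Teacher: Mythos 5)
Your proposal is correct and follows essentially the same route as the paper's proof: Sobolev embedding to keep the range of $u$ inside $\mathcal{B}$, a composition estimate to show $\C_h$ maps $\A_{\rho,j,m}$ into itself, Taylor's theorem with integral remainder in the first argument of $h$, and the Banach algebra property to bound the remainder by $C|z|^2\|\zeta\|_{\rho,j,m}^2$. The only difference is that where you sketch the composition estimate via Fa\`a di Bruno (a sketch that is slightly loose, since intermediate-order factors require Gagliardo--Nirenberg interpolation rather than plain $L^\infty$ control, and the norm is the anisotropic $H^j(\torus^d_\rho;H^m_\Lop)$ rather than a total-order mixed norm), the paper simply cites the standard Gagliardo--Nirenberg--Moser composition estimates.
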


\begin{proof}
The proof is rather straightforward, but it requires  that
we interpret some elementary calculations (the Taylor
theorem up to order two with remainder) in different levels
of abstraction.

Because of Sobolev's embedding theorem we have that the functions
$u, v$ are bounded and continuous, so that, for fixed
$\theta, x$ and for fixed $u,v$, we can think of
$u(\theta, x), v(\theta, x)$ as numbers
and assume that $|v(\theta, x)|$ is so small that
$u(\theta, x) + s\, v(\theta, x)$ is in the domain of $h$ for $s\in[0,1]$.

The fundamental theorem of calculus implies that
for all $\theta \in \torus_\rho^d ,x \in \D$ and for all
$u \in \mathcal{U}$ and all $v \in \A_{\rho, j, m}$, we have:
\begin{equation}\label{Taylor}
\begin{split}
h(u(\theta, x) + v(\theta, x), x) =&\ h(u(\theta, x), x) + \int_0^1 h'(u(\theta, x)
+ s\,v(\theta, x), x) v(\theta, x)\ ds\\
=&\ h(u(\theta, x), x) + h'(u(\theta, x),x) v(\theta, x) \\& +
\int_0^1 \int_0^s h''(u(\theta, x) + s\  t\ v(\theta, x), x)\ v(\theta, x)^2\ \ dt\ ds\ .\\
\end{split}
\end{equation}

Now we interpret the formula \eqref{Taylor} as an equality in function spaces.

By standard Gagliardo-Nirenberg-Moser composition estimates in Sobolev
spaces (\cite[Proposition 3.9]{TaylorIII}),
we have that for some $C_\alpha>0$ depending on the norm of $u$, one has:
\[\|D^\alpha h(u,x)\|_{H^j(\torus_\rho^d ; H_\Lop^m)} \leq C_{\alpha}(\|u\|_{L^\infty(\torus^d_\rho;\D)})\
(1+\|u\|_{H^j(\torus_\rho^d ; H_\Lop^m)})\ .\]

It is also easy to check that if $u(\theta,x), v(\theta, x) $ are
 complex differentiable
in $\theta$ and  $h$ is also differentiable in $\theta$,
we obtain that $ h''(u(\theta, x) + s\  t\ v(\theta, x), x)$ is
a function in $\A_{\rho,j, m}$ with uniform bounds.

Using the Banach algebra properties, we obtain indeed the desired result,
since we can bound the integral by a constant times $\|v\|_{\rho, j, m}^2 $ in the last term of \equ{Taylor}.
\end{proof}

\section{Solution of the zeroth order term for models A and A'}\label{sec:c0A}

For models A, A' the zeroth order term $c_0$ must satisfy \equ{c0}. The literature on the solution
of \equ{c0} is very wide and the results strongly depend on the form of the non-linearity. To be
concrete, we quote - among the others - a result on the existence of weak solutions (see Proposition~\ref{pro:weak}),
which are indeed regular solutions as noticed in Remark~\ref{rem:weak} below, and a result on the existence of an unbounded sequence
of solutions if $h$ is odd (see Proposition~\ref{pro:struwe}).
To fix the notation, let $f_0(x)\equiv \langle f(\theta,x)\rangle$.

\begin{proposition}(\cite{precup}, Theorem 9.7)\label{pro:weak}
Let $\D\subset \real^\ell$, $\ell\geq 3$, be a bounded open set, $f\in H^{-1}(\D)$;
let $\underline u$, $\overline u$ be, respectively, a lower and an upper
\footnote{By a lower (upper) solution of problem \equ{c0}, we mean a function
$\underline{u}\,(\overline{u}) \in H^1(\D)$, such that
$h({\underline u},\cdot)\, (h({\overline u},\cdot))
\in L^{\frac{2\ell}{\ell+2}}(\D)$ satisfies
\begin{equation*}
\begin{split}
-\Delta_x \underline u(x)+h(\underline u(x),x)&\leq \langle f\rangle(x)\\
(-\Delta_x \overline u(x)+h(\overline u(x),x)&\geq \langle f\rangle(x))\\
\end{split}
\end{equation*}
and $\underline u(x)\leq 0$ ($\overline u(x)\geq 0$) when $x\in \partial{\D}$.}
solution of \equ{c0}
with D boundary conditions and with ${\underline u}(x)\leq {\overline u}(x)$ for almost every $x\in \D$.
If the function $h:\D\times\real\rightarrow\real$ satisfies the Carath\'eodory
conditions\footnote{A function $h:\D\times\real\rightarrow\real$ satisfies the Carath\'eodory conditions,
if $h(y,\cdot):\D\rightarrow\real$ is measurable for every $y\in\real$ and $h(\cdot,x):\real\rightarrow\real$
is continuous for almost every $x\in\D$.} and is increasing in its first variable, i.e.
$$
h(u_1,x)\leq h(u_2,x)
$$
with ${\underline u}(x)\leq u_1\leq u_2\leq{\overline u}(x)$ for almost every $x\in\D$, then
\equ{c0} with D boundary conditions has at least one weak solution $u\in H_0^1(\D)$,
satisfying ${\underline u}(x)\leq u(x)\leq{\overline u}(x)$ for almost every $x\in\D$.
\end{proposition}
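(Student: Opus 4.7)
The plan is to combine a pointwise truncation of the nonlinearity with Schauder's fixed point theorem on $H_0^1(\D)$, and then use the monotonicity of $h$ together with the weak maximum principle to confine the fixed point between $\underline u$ and $\overline u$. First I would set
$$\pi(u,x) \equiv \max\{\underline u(x),\,\min\{u,\overline u(x)\}\}, \qquad \tilde h(u,x) \equiv h(\pi(u,x),x).$$
The Carath\'eodory hypothesis, combined with the monotonicity of $h$ in its first argument and the assumed integrability of $h(\underline u,\cdot)$ and $h(\overline u,\cdot)$, yields the pointwise bound $|\tilde h(u,x)|\le \max\{|h(\underline u(x),x)|,|h(\overline u(x),x)|\}$; hence $u\mapsto\tilde h(u,\cdot)$ maps $H_0^1(\D)$ continuously into $L^{2\ell/(\ell+2)}(\D)$, which embeds into $H^{-1}(\D)$ by Sobolev duality for $\ell\ge 3$.

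Next I would define $T:H_0^1(\D)\to H_0^1(\D)$ by letting $T(v)=u$ be the unique weak solution, provided by Lax--Milgram, of $-\Delta_x u=\langle f\rangle -\tilde h(v,\cdot)$ with $u|_{\partial\D}=0$. The uniform majorant above gives an a priori bound $\|T(v)\|_{H_0^1}\le R$ independent of $v$, so $T$ sends the closed ball $\overline{B_R}$ into itself. Continuity of $T$ follows from dominated convergence applied to $\tilde h$ along almost-everywhere convergent subsequences, and compactness follows from the compact embedding $H_0^1(\D)\hookrightarrow L^2(\D)$. Schauder's theorem then furnishes a fixed point $u\in H_0^1(\D)$ with $T(u)=u$.

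It remains to show $\underline u\le u\le\overline u$ a.e., for this reduces $\tilde h(u,\cdot)$ to $h(u,\cdot)$ and promotes the fixed point to a weak solution of \equ{c0}. I would test the weak form of
$$-\Delta_x(u-\overline u)=\langle f\rangle -\tilde h(u,\cdot)+\Delta_x\overline u$$
against $(u-\overline u)_+$, which belongs to $H_0^1(\D)$ by Stampacchia's truncation lemma (since $u=0$ and $\overline u\ge 0$ on $\partial\D$). On the set $\{u>\overline u\}$ the definition of $\pi$ forces $\tilde h(u,\cdot)=h(\overline u,\cdot)$ a.e., so the supersolution inequality $-\Delta_x\overline u+h(\overline u,\cdot)\ge\langle f\rangle$ gives $\int_\D |\nabla(u-\overline u)_+|^2\le 0$, whence $u\le\overline u$ a.e. A symmetric argument with the subsolution inequality and test function $(\underline u-u)_+$ delivers $u\ge\underline u$.

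The main obstacle, in my view, is the confinement step: the formal comparison is straightforward, but one must verify with care that $(u-\overline u)_+$ and $(\underline u-u)_+$ are genuinely admissible test functions in $H_0^1(\D)$ and that the identification $\tilde h(u,\cdot)=h(\overline u,\cdot)$ holds almost everywhere on $\{u>\overline u\}$ at the level of Sobolev representatives. The monotonicity hypothesis is exactly what allows the comparison to close without requiring any global growth or differentiability assumption on $h$; once the confinement is established, the remaining verification that $u$ weakly solves the untruncated problem \equ{c0} is essentially automatic.
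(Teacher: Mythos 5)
The paper does not actually prove this proposition: it is quoted verbatim from \cite{precup} (Theorem 9.7) in Appendix~\ref{sec:c0A} as an off-the-shelf sufficient condition for solving the zeroth order equation \equ{c0}, so there is no internal proof to compare against. Your truncation--Schauder--comparison argument is the standard proof of the sub/supersolution method and is sound: the monotonicity of $h$ on the order interval $[\underline u(x),\overline u(x)]$ is exactly what gives the fixed $L^{2\ell/(\ell+2)}$ majorant for $\tilde h$, and the confinement step closes because $(u-\overline u)_+\in H_0^1(\D)$ and $\tilde h(u,\cdot)=h(\overline u,\cdot)$ wherever that test function is nonzero. The one step you state too tersely is the compactness of $T$: the embedding $L^{2\ell/(\ell+2)}(\D)\hookrightarrow H^{-1}(\D)$ is dual to the \emph{critical} Sobolev embedding and is therefore not compact, so compactness cannot be read off from the factorization $T=(-\Delta)^{-1}\circ N$ alone. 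It does hold, but via the route you only hint at: weak convergence in $H_0^1$ gives strong convergence in $L^2$ by Rellich, hence a.e. convergence along a subsequence, and then dominated convergence with the fixed majorant gives strong convergence of $\tilde h(v_n,\cdot)$ in $L^{2\ell/(\ell+2)}$, hence of $T(v_n)$ in $H_0^1$; spelling this out would make the argument complete.
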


\begin{remark}\label{rem:weak}
For the models we are considering, it can be proved that all the weak solutions are smooth:
under regularity conditions on the coefficients defining the elliptic operator $\Delta_x$, then a weak solution
is regular.
We refer the reader to Chapter 4 in \cite{Ladyzhenskaya} and to \cite{Agmon, evans}.
\end{remark}

\vskip.1in

Assuming that $h$ is odd, one obtains an infinite number of solutions as shown by the following result.

\begin{proposition}(\cite{struwe}, Theorem 7.2)\label{pro:struwe}
Let $\D\subset \real^\ell$, $\ell\geq 3$, be a smoothly bounded domain. Assume that

$i)$ $h:\D\times\real\rightarrow\real$ is continuous and odd with primitive $h_p(x,u)=\int_0^u h(x,v) dv$;

$ii)$ there exists $p<2\ell/(\ell-2)$ and $C>0$, such that
    $$
    |h(x,u)|\leq C(1+|u|^{p-1})
    $$
    almost everywhere;

$iii)$ there exists $q>2$, $R_0>0$ such that
    $$
    0<q\ h_p(x,v)\leq h(x,v)\ v
    $$
    for almost every $x$, $|v|\geq R_0$;

$iv)$ the quantities $p$ and $q$ satisfy
    $$
    {{2p}\over {\ell(p-2)}}-1>{q\over {q-1}}\ .
    $$
Then, for any $f_0\in L^2(\D)$, the equation \equ{c0} with $c_0(x)=0$ on the boundary of $\D$,
has an unbounded sequence of solutions $c_{0k}\in H_0^{1,2}(\D)$, $k\in\nat$.
\end{proposition}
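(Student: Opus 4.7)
I would realize solutions of \equ{c0} as critical points of the $C^1$ functional
\[
J(u) = \tfrac{1}{2}\int_\D |\nabla u|^2\,dx + \int_\D h_p(x,u)\,dx - \int_\D f_0\, u\,dx
\]
on $H_0^{1,2}(\D)$. The growth bound $ii)$ combined with the Sobolev embedding $H_0^{1,2}\hookrightarrow L^p$ (valid since $p<2\ell/(\ell-2)$) guarantees that $J$ is well-defined and of class $C^1$, and critical points are precisely the weak solutions of \equ{c0}.

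First I would treat the symmetric case $f_0\equiv 0$. Since $h$ is odd in its second argument, $h_p$ is even and $J_0 := J|_{f_0=0}$ is invariant under $u\mapsto -u$. Condition $iii)$ is the classical Ambrosetti-Rabinowitz superquadraticity: by combining $J_0'(u)[u]$ with $J_0(u)$ it yields boundedness of Palais-Smale sequences, hence the PS condition via Rellich compactness (using $p$ subcritical); it also implies $h_p(x,v)\gtrsim |v|^q$ for $|v|$ large. A standard $\zed/2$-equivariant Ljusternik-Schnirelmann minimax over symmetric sets of Krasnoselskii genus at least $k$ then produces an unbounded sequence of critical values
\[
c_k = \inf_{A\in \F_k}\ \sup_{u\in A} J_0(u)\ .
\]
Testing on subspaces spanned by the first $k$ Dirichlet eigenfunctions of $-\Delta_x$ and exploiting Weyl's law $\lambda_k\sim k^{2/\ell}$ together with the competition between the quadratic and $L^q$ terms, one obtains the polynomial lower bound $c_k \gtrsim k^{\alpha}$ with $\alpha = 2p/(\ell(p-2))$.

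The main point is that for $f_0\not\equiv 0$ the $\zed/2$-symmetry of $J$ is destroyed and the genus argument no longer directly applies. Here I would invoke the \emph{perturbation from symmetry} scheme of Bahri-Berestycki/Rabinowitz/Struwe: one truncates $J$ to a functional $\tilde J$ which is even outside a region where $|J|$ is very large, defines modified minimax values $b_k$ over the same symmetric class $\F_k$, and using the Ambrosetti-Rabinowitz identity applied to the perturbed equation together with the elementary estimate $\bigl|\int_\D f_0\, u\,dx\bigr|\leq \|f_0\|_{L^2}\|u\|_{L^2}\leq C\|u\|_{H_0^{1,2}}$, one establishes the comparison
\[
|b_k - c_k|\leq C\bigl(1 + b_k^{(q-1)/q}\bigr)\ .
\]
Combined with $c_k\gtrsim k^\alpha$, hypothesis $iv)$ is designed exactly so that the gap between consecutive $c_k$'s dominates the perturbation, forcing $b_k\to\infty$; moreover, for all sufficiently large $k$ each $b_k$ is a genuine critical value of the full functional $J$. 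This produces the claimed unbounded sequence of $H_0^{1,2}(\D)$-solutions of \equ{c0}.

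The main obstacle is the third step: constructing the truncated functional $\tilde J$ in such a way that a symmetric pseudo-gradient deformation lemma still applies, carrying out the Ambrosetti-Rabinowitz bookkeeping that produces the exponent $(q-1)/q$ in the perturbation estimate, and verifying that condition $iv)$ is sharp enough to propagate $b_k\to\infty$ through the iteration. All remaining ingredients (PS condition, genus computation, Weyl-based lower bound on $c_k$) are standard and can be extracted from \cite{struwe}.
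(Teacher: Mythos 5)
The paper offers no proof of this proposition; it is quoted verbatim from the cited source (Theorem 7.2 of Struwe's monograph), so the only thing to compare your plan against is the proof of that theorem itself. Your outline is indeed the correct architecture — the Rabinowitz/Bahri--Berestycki/Struwe ``perturbation from symmetry'' scheme: $\zed/2$-equivariant genus minimax for the even part, a polynomial lower bound on the symmetric levels via eigenfunction subspaces and Weyl asymptotics, truncation of $J$ to restore approximate evenness, and a level comparison that hypothesis $iv)$ is calibrated to close. Two pieces of bookkeeping are off, and the second one matters structurally. First, the interpolation $\|u\|_{L^p}\le\|u\|_{L^2}^{1-\theta}\|\nabla u\|_{L^2}^{\theta}$ with $\theta=\ell(1/2-1/p)$, combined with $\|u\|_{L^2}\le\lambda_k^{-1/2}\|\nabla u\|_{L^2}$ on the orthogonal complement of the first $k-1$ eigenfunctions and $\lambda_k\sim k^{2/\ell}$, gives $c_k\gtrsim k^{\alpha}$ with $\alpha=\frac{2p}{\ell(p-2)}-1$, not $\frac{2p}{\ell(p-2)}$; the ``$-1$'' is precisely why it appears in $iv)$. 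Second, the symmetry-defect estimate carries the exponent $1/q$, not $(q-1)/q$: on the support of the truncation the Ambrosetti--Rabinowitz identity gives $\|u\|_{L^q}^q\le C(1+|J(u)|)$, whence $|\int_\D f_0\,u\,dx|\le\|f_0\|_{L^{q'}}\|u\|_{L^q}\le C(1+|J(u)|)^{1/q}$. More importantly, the operative inequality is not a two-sided comparison $|b_k-c_k|\le C(1+b_k^{\beta})$ with $\beta<1$ — any such bound would already force $b_k\gtrsim c_k\to\infty$ and would render $iv)$ superfluous. The actual estimate is between \emph{consecutive} perturbed levels, conditional on non-criticality: if only finitely many $b_k$ are critical values of $J$, then $b_{k+1}\le b_k+C\bigl(1+b_{k+1}^{1/q}\bigr)$ for all large $k$, which iterates to $b_k\le Ck^{q/(q-1)}$ and contradicts the lower bound $k^{\alpha}$ exactly when $\alpha>q/(q-1)$, i.e.\ under $iv)$. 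With those corrections your plan coincides with the proof in the cited reference.
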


\vskip.1in

\begin{remark}\label{rem:c0}
$i)$ In the above results, the Laplacian can typically be replaced by any second order uniformly
elliptic operator with smooth coefficients (compare with \cite{amb-malc}).

$ii)$ The multiplicity of solutions can be studied using Lusternik-Schnirelman theory, which allows one
to find critical points of the variational functional on a given manifold. The number of critical points
is related to the Lusternik-Schnirelman category of the manifold, for which a lower bound is provided
by the cup length of the manifold (\cite{amb-malc}).

$iii)$ In our discussion we considered D boundary conditions; results are available also for N
boundary conditions (see, e.g., \cite{Tang-Wu, Tang, Wu-Tan}) or can be extended to
P boundary conditions.
\end{remark}

\bibliographystyle{alpha}
\bibliography{cccl1}

\end{document}